\theoremstyle{plain}
\newtheorem{theorem}{Theorem}[section]
\newtheorem*{theorem*}{Main Theorem}
\newtheorem{proposition}[theorem]{Proposition}
\newtheorem{lemma}[theorem]{Lemma}
\newtheorem{conjecture}[theorem]{Conjecture}
\newtheorem{question}[theorem]{Question}
\theoremstyle{definition}
\newtheorem{definition}[theorem]{Definition}
\newtheorem{remark}[theorem]{Remark}
\newtheorem{example}[theorem]{Example}
\newcommand{\enm}[1]{\ensuremath{#1}}       
\newcommand{\op}[1]{\operatorname{#1}}
\newcommand{\cal}[1]{\mathcal{#1}}
\newcommand{\NN}{\enm{\mathbb{N}}}
\newcommand{\RR}{\enm{\mathbb{R}}}
\renewcommand{\AA}{\enm{\mathbb{A}}}
\newcommand{\PP}{\enm{\mathbb{P}}}
\newcommand{\Ff}{\enm{\cal{F}}}
\newcommand{\Ii}{\enm{\cal{I}}}
\newcommand{\Mm}{\enm{\cal{M}}}
\newcommand{\Oo}{\enm{\cal{O}}}
\newcommand{\Ss}{\enm{\cal{S}}}
\newcommand{\Ww}{\enm{\cal{W}}}
\renewcommand{\phi}{\varphi}
\renewcommand{\theta}{\vartheta}
\renewcommand{\epsilon}{\varepsilon}
\newcommand{\supp}{\op{Supp}}
\newcommand{\red}{\mathrm{red}}
\newcommand{\reg}{\mathrm{reg}}
\newcommand{\Sing}{\mathrm{Sing}}
\renewcommand{\to}[1][]{\xrightarrow{\ #1\ }}
\newcommand{\old}[1]{}
\keywords{Linear series; Zero-dimensional schemes; Cuspidal projections.}
\subjclass[2010]{(Primary) 14H45, 14H50; (Secondary) 14H20, 14H51}
\begin{document}

\title[]{Injective linear series of algebraic curves on quadrics}

\author{Edoardo Ballico and Emanuele Ventura}

\address{Universit\`a di Trento, 38123 Povo (TN), Italy}
\email{edoardo.ballico@unitn.it}

\address{Universit\"{a}t Bern, Mathematisches Institut, Sidlerstrasse 5, 3012 Bern, Switzerland}
\email{emanueleventura.sw@gmail.com, emanuele.ventura@math.unibe.ch}

\maketitle

\begin{abstract}
We study linear series on curves inducing injective morphisms to projective space, using zero-dimensional schemes and
cohomological vanishings.  Albeit projections of curves and their singularities are of central importance in algebraic geometry, basic problems still remain unsolved. In this note, we study cuspidal projections of space curves lying on irreducible quadrics (in arbitrary characteristic). 
\end{abstract}

\section{Introduction}

Projections and singularities of curves are of central importance in algebraic geometry. The projective geometry of singular curves is a delightful chapter of classical algebraic geometry that remains active even up to this date: many questions await to be settled, and in turn they inspire the introduction of
tools entailing deformation theory, zero-dimensional schemes, and combinatorics, among other techniques. 

A natural direction of research is the classification of singularities that may arise on a curve $X$, in some specific ranges of the numerical invariants attached to $X$.  An approach to this classification issue, relying on osculating spaces and combinators of semigroups of valuations, has been recently employed in \cite{BIV}. Other classification results, leveraging the structure of free resolutions of certain ideals, have been achieved in \cite{CKPU}. 

In this note, we employ to some extent the standpoint of Greuel, Lossen, and Shustin \cite{GLS}, using the geometry of zero-dimensional schemes and the cohomology of their ideal sheaves, to study {\it cuspidal} (or unibranch) singularities. These types of singular points are usually related to some {\it tangency} conditions and so carry interesting geometric information about the curve. 

Let $k$ be an algebraically closed field. Let $X$ be a complete smooth curve of genus $g$ over $k$, i.e., an integral scheme of dimension one, smooth and proper over $k$. Every such $X$ is projective and can be embedded in projective $3$-space, independently of the characteristic of $k$.  A natural question to wonder about is whether every $X$ admits a projection to $\PP^2$ with only cuspidal singularities, i.e. $X$ admits a cuspidal projection. 
Ferrand \cite{ferrand} showed that, when $\mathrm{char}(k)>0$, if $X$ admits a cuspidal projection then $X$ is a set-theoretic complete intersection. 

Thereafter, back to characteristic zero, Piene \cite{Piene} proved that every $X\subset \PP^3$ of degree $d$ and genus $g$, when $\binom{d-1}{2}-g\leq 3$, admits a cuspidal projection. However, a general canonical curve $X\subset \PP^3$ of genus $4$ does not admit a cuspidal projection \cite[Theorem 2]{Piene}. (Note that the latter curve is a complete intersection, so Ferrand's result does not hold in $\mathrm{char}(k)=0$.) Sacchiero \cite{GSac} showed that, in the range $\binom{d-1}{2}-g\geq 4$, a generic projection of an $X\subset \PP^n$ ($n\geq 4$), without inflection points and without hyperosculating planes, is not cuspidal. 

For cuspidal curves, a classical open problem is to determine the maximum number of cusps realizable on a plane 
curve of degree $d$; recent asymptotic results were proven by Calabri, Paccagnan, and Stagnaro \cite{cps}. Interestingly, Koras and Palka \cite{kp19} showed that complex plane rational cuspidal curves possess at most four singular points. 

Cuspidal projections play an important role in the theory of $X$-ranks. Let $X\subset \PP^3$ be a smooth curve and $p\in \PP^3\setminus X$. Then the $X$-rank of $p$ satisfies $\mathrm{rk}_X(p)>3$ if and only if the projection of $X$ away from $p$ is cuspidal. See \cite{BBV} for more results in this direction. 

What originally triggered this work has a topological source. Motivated by the study of regular topological maps \cite{BJJM} in the case of smooth curves, Micha\l{}ek posed the problem \cite{MM}: 

\begin{question}\label{q1}
For any $X$ over a field $k$ as above, does there exist an injective morphism $\phi: X\rightarrow \PP^2$? 
\end{question}

\noindent This is also studied for other projective varieties by G\"{o}rlach \cite{G19}. The curve $\phi(X)$ is then an integral plane curve possibly with only cuspidal singularities. The map $\phi: X\to \phi(X)$ is a closed bijection and so a homeomorphism in Zariski topology; cuspidal projections of $X$ to $\PP^2$ are instances of injective morphisms. 

The aim of this note is to study base-point free (not necessarily complete) two-dimensional linear series $g^2_d$ on some smooth algebraic curves $X$ inducing separable and injective morphisms to $\mathbb P^2$; we call these linear series {\it injective}. In this article, these are usually constructed by cuspidal projections of $X$ to $\PP^2$ with the help of zero-dimensional schemes and their cohomology, which will let us give positive instances to Question \ref{q1}.  

In this context, we propose the following conjecture:

\begin{conjecture}\label{i1}
For large $g$, a very general smooth curve of genus $g$ has no injective linear series $g^2_d$.
\end{conjecture}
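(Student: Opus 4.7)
The plan is to prove the conjecture by a dimension count on Severi-type varieties of plane curves with only unibranch singularities, then taking a countable union. For each positive integer $d$, an injective $g^{2}_{d}$ on a smooth curve $X$ of genus $g$ amounts to an irreducible plane curve $C\subset\PP^{2}$ of degree $d$ with only unibranch (cuspidal) singularities and with normalization $X$; the sum of $\delta$-invariants of $C$ must be $\Delta:=\binom{d-1}{2}-g$. Let $\Sigma_{d}\subset|\Oo_{\PP^{2}}(d)|\cong\PP^{N}$, $N=\binom{d+2}{2}-1$, denote the corresponding locally closed subvariety, and $V_{d}\subset\mathcal{M}_{g}$ the image of $\Sigma_{d}$ under the normalization map. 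Since the set of $d$ is countable, it suffices to show $\dim V_{d}<3g-3$ for every $d$ once $g$ is large enough; the union $\bigcup_{d}V_{d}$ is then a countable union of proper subvarieties of $\mathcal{M}_{g}$ and a very general curve lies outside it.

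Since the $\mathrm{PGL}_{3}$-action on $\PP^{2}$ is contained in the fibers of $\Sigma_{d}\to\mathcal{M}_{g}$, one has $\dim V_{d}\leq\dim\Sigma_{d}-8$. The key input is a codimension bound on $\Sigma_{d}$: any unibranch plane curve singularity with $\delta$-invariant $\delta_{i}$ should impose roughly $2\delta_{i}$ conditions on the linear system $|\Oo_{\PP^{2}}(d)|$. This is motivated by the Milnor formula $\mu_{i}=2\delta_{i}$ (since the number of branches is one) and by the fact that the codimension of the equisingular stratum in the versal deformation is governed by $\mu_{i}$, coinciding with the Tjurina number $\tau_{i}$ in the quasi-homogeneous case. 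Estimates in the Greuel--Lossen--Shustin framework \cite{GLS} are then expected to give
\[
\dim\Sigma_{d}\;\leq\;N-2\Delta+c\cdot s,
\]
where $s$ is the number of singular points of a generic $C\in\Sigma_{d}$ and $c$ is a bounded correction. Using this together with $\dim V_{d}\leq\dim\Sigma_{d}-8$ and the identity
\[
N-2\Delta-8\;=\;\tfrac{-d^{2}+9d-2}{2}+2g-9,
\]
one obtains $\dim V_{d}<3g-3$ for every admissible $d$ (i.e.\ $\binom{d-1}{2}\geq g$, so $d\gtrsim\sqrt{2g}$) once $g$ is large, since the quadratic term $-d^{2}/2$ eventually dominates the linear term in $g$.

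The main obstacle is making the codimension estimate rigorous and sufficiently sharp across \emph{all} unibranch analytic types that can occur on plane curves of degree $d$, and in arbitrary characteristic. For simple cusps $A_{2k}$ and quasi-homogeneous unibranch singularities one cleanly has $\tau=\mu=2\delta$; in general $\tau\leq\mu=2\delta$, so one must work with the topologically (rather than analytically) equisingular stratum, whose codimension is sensitive to the multiplicity sequence and may involve additional corrections controlled by $(m_{i}-1)$-type terms. A secondary difficulty is that Severi varieties of cuspidal curves are known to have components of larger-than-expected dimension in certain regimes, and such components must be excluded separately --- for instance by a degeneration argument inside $\overline{\mathcal{M}}_{g}$ (say to chains of elliptic curves via limit linear series), where the existence of an injective $g^{2}_{d}$ can be ruled out directly. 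Overcoming these two technical points is, in my view, the crux of the conjecture.
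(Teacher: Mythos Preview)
The statement you are addressing is a \emph{conjecture} in the paper, not a theorem: the authors do not prove it, and there is no proof in the paper to compare your attempt against. So the relevant question is simply whether your argument is a proof, a plausible plan, or neither.

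Your reduction is correct and standard: an injective $g^2_d$ on $X$ is the same datum as an irreducible plane curve of degree $d$ with only unibranch singularities and normalization $X$, and taking a countable union over $d$ is exactly what ``very general'' allows. The arithmetic you do is also right: \emph{if} one had $\dim\Sigma_d\le N-2\Delta+O(1)$, then for every admissible $d$ one gets $\dim V_d\le 2g-\tfrac{d^2}{2}+O(d)$, and since $d^2\ge 2g$ this is $<3g-3$ for large $g$.

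The gap is that the bound $\dim\Sigma_d\le N-2\Delta+c\cdot s$ with $c$ bounded is not established, and is in fact the entire content of the conjecture. Two points make this concrete. First, the correction $c\cdot s$ is not innocuous: the number $s$ of singular points can be as large as $\Delta$ (all ordinary cusps), so unless $c=0$ you are really only claiming $\dim\Sigma_d\le N-(2-c)\Delta$, and you give no argument for any specific $c<1$. Second, the inequality $\tau\le\mu=2\delta$ for a unibranch germ goes the \emph{wrong} way for what you need: it says the analytically equisingular stratum in the versal base has codimension $\tau\le 2\delta$, hence the corresponding stratum in $|\Oo_{\PP^2}(d)|$ has dimension \emph{at least} $N-2\Delta$ in the expected-dimension heuristic, not at most. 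What you would need is a uniform \emph{lower} bound on the codimension of the equisingular (or equigeneric) stratum of the form $\alpha\delta$ with $\alpha>1$, valid for every unibranch plane curve singularity and summable over all singular points; no such bound is available in the literature, and the paper's own Theorem~\ref{g1} (Greuel--Lossen--Shustin) only controls the dimension in the restricted range $9\kappa<d^2+6d+8$.

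You correctly identify the two obstructions at the end --- uniform codimension estimates across all unibranch types, and excess-dimensional components of cuspidal Severi varieties --- but these are not ``technical points'': they are the conjecture. The degeneration idea you float (limit linear series on chains of elliptic curves) is promising in spirit, but nothing in your write-up indicates how an injective $g^2_d$ would be ruled out on the central fibre, and injectivity is not a closed condition, so one would have to argue carefully about what survives in the limit. As written, this is a reasonable outline of why one might \emph{believe} Conjecture~\ref{i1}, but it is not a proof.
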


We spell out the meaning of  ``very general'' in the statement of Conjecture \ref{i1}. Let $Y$ be an integral quasi-projective variety. Fix a property $\wp$ that a point $p\in Y$ may satisfy. We say that $\wp$ is true for a very general point of $Y$ if the set of all $p\in Y$ for which $p$ fails $\wp$ is contained in a countable family of proper subvarieties of $Y$. In Conjecture \ref{i1}, the generality is applied to the moduli scheme $\Mm _g$ ($g\ge 2$) of all smooth curves of genus $g$ (over some fixed algebraically closed field). We guess that more should be true: for large $g$ and for every positive integer $d$, the set of all $X\in \Mm _g$ with an injective $g^2_d$ sits inside a proper subvariety of $\Mm _g$. 

We now clarify the meaning of ``large $g$'' in the statement of Conjecture \ref{i1}: this refers to the existence of an integer $g_0(k)$ (depending on the fixed algebraically closed ground field $k$) such that, for all $g \geq g_0(k)$, a very general curve of genus $g$ has no injective linear series $g^2_d$.

We believe it would be interesting to have partial results on Conjecture \ref{i1}, for non-complete $g^2_d$, i.e., for $g^2_d$
inducing a non-degenerate injective map $j: X\to \PP^n, n>2$, composed with a linear projection. This is the setting of Piene \cite{Piene} and Sacchiero \cite{GSac}, except that they require $j$ to be an embedding. Furthermore, we ask the following

\begin{question}\label{q2}
Let $X$ be a smooth curve of genus $g$. Are there infinitely many integers $d$ such that $X$ has injective $g^2_d$?
\end{question}

Even if Conjecture \ref{i1} fails, we ask whether, for all sufficiently large $g$, there exists an $X\in \Mm _g$ with no injective $g^2_d$. In such a case, one may still wonder whether, for infinitely many genera $g$, a very general curve of genus $g$ has no injective $g^2_d$.

Most of our results arise from looking at the quadric $Q =\PP^1\times \PP^1$. Let $X$ be a smooth projective curve of genus $g$. By the universal property of the fibered product of schemes, giving a morphism $f: X\to \PP^1\times \PP^1$ is equivalent to prescribing two morphisms $u_i: X\to \PP^1$, $i=1,2$, i.e., two base-point free linear series $g^1_{d_1}$ and $g^1_{d_2}$, where
$d_1 = \deg u_1^\ast (\Oo _{\PP^1}(1))$ and $d_2 = \deg u_2^\ast (\Oo _{\PP^1}(1))$. (Here we assume $d_1, d_2\ne 0$, as otherwise $f(X)$ is contained in a line of $\PP^3$.) The morphism $f$ is birational onto its image if and only if there is no $4$-tuple $(D,h,v_1,v_2)$, where $D$ is a smooth projective curve, $h: X\to D$ is a finite morphism with $\deg (h)>1$, $v_i: D\to \PP^1$, $i=1,2$, are morphisms and $u_i = v_i\circ h$, $i=1,2$. In classical terminology, $f$ is birational onto its image if and only if $g^1_{d_1}$ and $g^1_{d_2}$ are not composed with the same involution. The pair $(d_1,d_2)$ is defined to be the bidegree of $f$.
\begin{question}
For which $(X,d_1,d_2)$, is there an $f$ of bidegree $(d_1,d_2)$ that is injective and separable? For which $X$, are there infinitely many $(d_1,d_2)$ such that there is an injective and separable $f$? For which pair $(X,d_1)$, with $d_1>1$, are there infinitely many integers $d_2$, such that there exists an injective and separable $f: X\to \PP^1\times \PP^1$ of bidegree $(d_1,d_2)$?
\end{question}

\noindent{\bf Main result.} Let $X$ be a complete smooth curve of genus $g$ over $k$. An {\it injective linear series} on $X$ is a (not necessarily complete) series $g_d^2$ inducing a separable and injective morphism $\phi: X\rightarrow \PP^2$. We organize injective linear series into natural {\it types}: 

\begin{definition}\label{deftypes}
Let $L = \phi^\ast (\Oo_{\PP^2}(1))$ be the line bundle on $X$ associated to $g^2_d$, i.e. for a divisor $D\in g^2_d$ one has $L = \Oo_X(D)$.
An injective $g^2_d$ on $X$ has one of the following {\it types}: 
\begin{enumerate}
\item[({\bf I})] a complete $g^2_d$, i.e. $h^0(L)=3$; 
\item[({\bf II})] an incomplete $g^2_d$, i.e. $h^0(L)\ge 4$, with $L$ very ample line bundle;
\item[({\bf III})] an incomplete $g^2_d$ with $L$ not very ample.
\end{enumerate}
\end{definition}
\noindent (See Proposition \ref{rmktypes} for some geometric remarks about them.) This is our main result: 

\begin{theorem*}[{\bf Theorems \ref{pp1} and \ref{bb6}}]
Let $k$ be an algebraically closed field of arbitrary characteristic and let $d_2\geq d_1\geq 1$. Then there exists a smooth genus $g$ curve with an injective $g^2_{d_1+d_2}$ of {\it type} {\bf II} with $g= d_1d_2-d_1-d_2+1$. 

Let $k$ be an algebraically closed field of characteristic zero. Let $d_2\ge d_1\ge 16$ and $h>0$ such that $3h+2\le \binom{d_1-1}{2}$. Fix an integer $\kappa$ such that $0< \kappa \le 2h$ and set $g = d_1d_2-d_1-d_2+1-\kappa$. Then there exists a smooth genus $g$ curve with an injective $g^2_{d_1+d_2}$ of type {\bf III}. 
\end{theorem*}

\noindent {\bf More contributions and structure of the paper.}
In \S \ref{first two types}, we work in $\mathrm{char}(k)=0$. We first record some observations about the geometry behind 
the types of linear series in Proposition \ref{rmktypes}. In Theorem \ref{g0}, we use an existence result of Barkats \cite{bar} to show that in every genus there exist curves equipped with {\it type} {\bf I} injective linear series. We recall an important (existence and smoothness) result of Greuel, Lossen, and Shustin about the variety $V(d,\kappa)$, parametrizing plane curves of given degree $d$ and with $\kappa$ ordinary cusps as their only singularities; see Theorem \ref{g1}. In Remark \ref{glscurves}, we point out that the curves from this result are essentially different from those arising in our Theorems \ref{pp1} and \ref{bb6}. 

In \S \ref{hyperelliptic}, we offer a study of injective linear series on hyperelliptic curves. Surprisingly, this material seems new and it is interesting on its own right. 
We explicitly describe $2g+2$ families of $\infty^1$-many injective linear series $g^2_{g+3}$ of degree $g+3$, see Theorem \ref{a1}. These families are in correspondence to the Weierstrass points of a given hyperelliptic curve along with a double cover of $\PP^1$. Although the statement of Theorem \ref{a1} is in characteristic zero, we point out that the result holds in any characteristic with suitable modifications of the arguments; this is observed in Remark  \ref{anychar}. 
Proposition \ref{g+3inj} and Proposition \ref{g+3injbis} give a characterization of injective (non-special) linear series of degree $g+3$. Proposition \ref{a3} provides a description of injective linear series of degree $g+2$. 

In \S \ref{quadrics}, we study more closely space curves lying on irreducible quadrics in $\PP^3$. With the help of zero-dimensional schemes in arbitrary $\mathrm{char}(k)$, we prove Theorem \ref{b1} and Theorem \ref{b4.1}, showing the existence of smooth curves on smooth quadrics and cones admitting cuspidal projections; these two results yield Theorem \ref{pp1}. 

In order to establish Theorem \ref{bb6}, we employ results of Ro\'{e} and of Greuel, Lossen, and Shustin, extending them to smooth quadrics $Q\subset \PP^3$; this is achieved in Lemmas \ref{bb2}, \ref{bb3}, \ref{bb4}, and \ref{bb5}. 

In \S\ref{innercusp}, we introduce two sets $\mathcal A$ and $\mathcal B$, naturally attached to a (inner smooth) cuspidal projection of a curve from a point. Theorem \ref{f2} and Theorem \ref{f3} provide a characterization of curves in $\PP^3$, lying on smooth quadrics and quadric cones, with only cuspidal singularities in terms of $\mathcal A$ and $\mathcal B$.

\vspace{4mm}
\noindent {\bf Acknowledgements.}
The first author is partially supported by MIUR and GNSAGA of INdAM (Italy). The second author is supported 
by the grant NWO Den Haag no. 38-573 of Jan Draisma.

\section{Types and Castelnuovo's bound}\label{first two types}

In this section, we work in $\mathrm{char}(k)=0$. We start formulating a proposition recording some geometric remarks behind the types of injective linear series: 
\begin{proposition}\label{rmktypes}
Keep the notation from Definition \ref{deftypes}. Then the following hold: 
\begin{enumerate}
\item[(i)] If $g^2_d$ has {\it type} {\bf II}, then there exists $g^3_d\subseteq |L|$ with $g^3_d\supset g^2_d$; the latter $g^3_d$ induces an embedding $j(X)\to \PP^3$ such that the morphism $\phi$ is the composition of $j$ with a linear projection of $\PP^3$ from a point of $\PP^3\setminus j(X)$ \cite[p. 110, parts 4) and
5)]{Piene}. 

\item[(ii)] If $g^2_d$ is of {\it type} {\bf III}, then for any $3\le s\le h^0(L)-1$, any
$g^s_d\subseteq |L|$ containing $g^2_d$ is base-point free and it induces a morphism $u: X\to \PP^s$ birational onto its image,
but not an embedding; the curve $\phi(X)$ is obtained from $u(X)$ by a linear projection from an $(s-3)$-dimensional linear subspace of
$\PP^s$ not intersecting $u(X)$.
\end{enumerate}
\end{proposition}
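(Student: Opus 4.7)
For part (i), the plan starts from the fact that $L$ is very ample with $h^0(L)=N+1\ge 4$, so $|L|$ embeds $X$ into $\PP^N$ with $N\ge 3$. The three-dimensional subspace $W\subset H^0(L)$ defining $\phi$ corresponds dually to a linear subspace $\Lambda\subset\PP^N$ of dimension $N-3$, disjoint from $X$ by base-point-freeness of $g^2_d$; then $\phi$ is the restriction to $X$ of the linear projection $\pi_\Lambda:\PP^N\dashrightarrow\PP^2$. When $N=3$ I would simply take $g^3_d=|L|$. When $N\ge 4$, the strategy is to select a codimension-one subspace $\Lambda'\subset\Lambda$ (of dimension $N-4$) such that $\pi_{\Lambda'}:\PP^N\dashrightarrow\PP^3$ restricts to an embedding $j:X\hookrightarrow\PP^3$; such $\Lambda'$ exists by a standard dimension count, since $\Sec(X)\cup\op{Tan}(X)$ has dimension at most $3$ in $\PP^N$ and a general hyperplane in $\Lambda$ will miss $\Lambda\cap(\Sec(X)\cup\op{Tan}(X))$. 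The factorization $\pi_\Lambda=\pi_p\circ\pi_{\Lambda'}$ with $p=\pi_{\Lambda'}(\Lambda)\in\PP^3$ then realizes $\phi$ as the composition of $j$ with projection from the point $p\notin j(X)$ (the latter because $\Lambda\cap X=\emptyset$).

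For part (ii), fix any $g^s_d\subseteq|L|$ with $3\le s\le h^0(L)-1$ containing $g^2_d$, corresponding to $W\subseteq V\subseteq H^0(L)$ of respective dimensions $3$ and $s+1$. Base-point-freeness of $V$ is inherited from that of $W$, so $V$ induces a morphism $u:X\to \PP^s$. The inclusion $W\subseteq V$ dually determines a linear projection $\pi:\PP^s\dashrightarrow\PP^2$ of center dimension $s-3$, and $\phi=\pi\circ u$ by construction; injectivity of $\phi$ forces injectivity of $u$, and the fact that $\phi$ is everywhere defined forces the center of $\pi$ to be disjoint from $u(X)$. In characteristic zero, any injective morphism from a smooth integral curve is birational onto its image (injectivity forces the function field extension to have degree one), giving the birationality claim. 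For the non-embedding assertion, I would argue by contradiction: if $u$ were an embedding, then $u^*\Oo_{\PP^s}(1)=L$ would agree with $\Oo_{\PP^s}(1)|_{u(X)}$, which is very ample on $u(X)\cong X$, contradicting the failure of $L$ to be very ample in {\it type} {\bf III}.

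The only non-formal step is the general-position argument in (i) securing the existence of the codimension-one $\Lambda'\subset\Lambda$ avoiding $\Sec(X)\cup\op{Tan}(X)$; this is a classical trisecant-lemma-style dimension count and is essentially the content invoked from \cite[p.~110, parts 4) and 5)]{Piene} in the statement, to which I would simply appeal after fixing the notation above.
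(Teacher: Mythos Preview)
The paper does not supply a proof of this proposition: it is presented as a set of geometric remarks, with part (i) supported only by the citation to \cite[p.~110, parts 4) and 5)]{Piene}. Your argument is correct and fills in exactly the details the authors leave implicit.

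One refinement worth making explicit in part (i): the bare dimension count ``$\Sec(X)\cup\op{Tan}(X)$ has dimension at most $3$'' does not by itself guarantee that $\Lambda\cap(\Sec(X)\cup\op{Tan}(X))$ is finite, because $\Lambda$ is the \emph{specific} $(N-3)$-plane determined by the given $g^2_d$, not a general one. Finiteness follows instead from the hypotheses on $\phi$: injectivity forces every honest secant line of $X$ to miss $\Lambda$, and separability (automatic here since $\mathrm{char}(k)=0$) forces all but finitely many tangent lines to miss $\Lambda$; each tangent line meeting $\Lambda$ contributes at most one point (as $\Lambda\cap X=\emptyset$), so the intersection is finite and a general hyperplane $\Lambda'\subset\Lambda$ avoids it. You already flag this as the one non-formal step and defer to Piene, which is precisely what the paper does, so there is no discrepancy.
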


\begin{remark}\label{halphencastelnuovo}
Since a cuspidal $g^2_d$ has no base points and it induces an injective morphism, a necessary condition for the existence of a $g^2_d$ of {\it type} {\bf II} or {\bf III} on $X$ is that $X$ has a degree $d$ and genus $g=p_a(X)$ non-degenerate birational model in $\PP^3$. 

Fix integers $d, g$ such that $g\ge 0$ and $d\ge 3$. Define: 
\[
\pi(d,3) = m(m-1) + m\epsilon, \mbox{ where } \epsilon\in \lbrace 0,1\rbrace \mbox{ and } d=2m+1+\epsilon.
\] 
Halphen and later Castelnuovo proved that if there is a non-degenerate space curve $X\subset \PP^3$ of degree $d$ and arithmetic genus $g$, then $g\le
\pi(d,3)$ \cite[Theorem 3.7]{he}.  Not all the possible integers $g\le \pi(d,3)$ arises as arithmetic genera, even allowing singular curves. 
\end{remark}

Thus the question of existence of the possible pairs $(g,d)$ is natural: 

\begin{question}\label{qq1}
For which $g$ and $d$, does there exist a smooth curve of genus $g$ with an injective $g^2_d$ of types {\bf I} or {\bf II}?
\end{question}

\begin{remark}
Question \ref{qq1} was partially answered by  Ephraim and Kulkarni, who proved that for each genus $g\ge 0$ and each integer $d>2g$ there exists a curve of genus $g$ with a type {\bf II} injective $g^2_d$ \cite[Corollary 3.9]{EK}.
\end{remark}

\begin{theorem}\label{g0}
For each integer $g\ge 0$ there exists a smooth curve of genus $g$ with a type {\bf I} injective linear series.
\end{theorem}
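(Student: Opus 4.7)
The plan is to build, for each $g \ge 0$, an irreducible plane curve $C \subset \PP^2$ of degree $d$ with exactly $\kappa = \binom{d-1}{2} - g$ ordinary cusps and no other singularities, and to show that the $g^2_d$ on its normalization $X$ cut out by lines is complete.

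First I would fix $d$ as the smallest integer with $\binom{d-1}{2} \ge g$ and set $\kappa = \binom{d-1}{2} - g$, so that $0 \le \kappa \le d - 2$. This places $(d,\kappa)$ well inside the range of Barkats' existence result \cite{bar}: there is an irreducible plane curve $C$ of degree $d$ with $\kappa$ ordinary cusps and no other singularities, lying in a smooth component of the relevant Severi variety, so that a generic such $C$ has its $\kappa$ cusps in general position. The normalization $\phi : X \to C \hookrightarrow \PP^2$ is birational; since ordinary cusps are unibranch, $\phi$ is set-theoretically injective, and it is separable in characteristic zero. By the $\delta$-invariant formula $X$ has genus $\binom{d-1}{2} - \kappa = g$.

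The substantive step is the completeness $h^0(L) = 3$, where $L := \phi^* \Oo_{\PP^2}(1)$. By Riemann--Roch and Serre duality this amounts to proving $h^0(K_X \otimes L^{-1}) = g - d + 2$. The classical adjoint theorem presents $|K_X|$ as the series cut on $X$ by plane curves of degree $d - 3$ through the $\kappa$ cusps; fixing a line $\ell$ that cuts an effective divisor of $L$, the shifted series $|K_X \otimes L^{-1}|$ corresponds to such adjoints that additionally contain $\ell$. By B\'ezout, any plane curve of degree $d - 3 < d$ meeting $\ell$ in $d$ points must contain $\ell$, and the residual component is a plane curve of degree $d - 4$ still passing through the $\kappa$ cusps, so
\[
h^1(L) = h^0(\PP^2, \Ii_Z(d-4)),
\]
where $Z$ is the scheme of cusps. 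If $Z$ is in general position, the $\kappa$ cusps impose independent conditions on $H^0(\Oo_{\PP^2}(d-4))$ as soon as $\binom{d-2}{2} \ge \kappa$; in our range $\kappa \le d - 2 \le \binom{d-2}{2}$ for $d \ge 5$, with the cases $d \le 4$ (i.e., $g \le 3$) covered by a smooth conic, smooth cubic, smooth quartic, or the one-cuspidal quartic. Therefore $h^0(\Ii_Z(d-4)) = \binom{d-2}{2} - \kappa = g - d + 2$, and $h^0(L) = 3$.

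The main obstacle is securing the general position of the cusps: it is essential that Barkats' existence result come equipped with smoothness of a suitable component of $V(d,\kappa)$, so that a generic member yields cusps imposing $\kappa$ independent conditions on plane curves of degree $d - 4$. Once that is granted, the completeness of the $g^2_d$ reduces to the brief Riemann--Roch and adjunction computation above.
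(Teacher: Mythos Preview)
Your proposal is correct and follows essentially the same route as the paper: choose the minimal $d$ with $\binom{d-1}{2}\ge g$, invoke Barkats for an irreducible degree~$d$ plane curve with $\kappa=\binom{d-1}{2}-g$ ordinary cusps, and reduce completeness of the pulled-back $g^2_d$ via Riemann--Roch and the adjoint description of $|K_X|$ to the vanishing $h^1(\PP^2,\Ii_Z(d-4))=0$ for cusps $Z$ in general position, handling $g\le 3$ by hand. The only cosmetic differences are that you spell out the B\'ezout step identifying $h^1(L)$ with $h^0(\Ii_Z(d-4))$, and you state the slightly looser bound $\kappa\le d-2$ (in fact $\kappa\le d-3$); your flagged ``main obstacle'' about general position of the cusps is exactly the point the paper handles by taking a general $S$.
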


\begin{proof}
Let $d$ be the minimal integer $\ge 2$ such that $g\le (d-1)(d-2)/2$. If $g = (d-1)(d-2)/2$ it is sufficient to take as $X$ a
smooth degree $d$ plane curve. Thus we may assume $d\ge 4$ and $(d-2)(d-3)/2 < g < (d-1)(d-2)/2$. Define: 
\[
\kappa = (d-1)(d-2)/2 - g.
\]
Note that $1\le \kappa \le d-3$ and hence $5\kappa \le 5(d-3)\leq (d+2)(d+1)/2-d-1$. Since $5\kappa \le (d+2)(d+1)/2-d-1$, there exists an integral plane curve 
$Y\subset \PP^2$ of degree $d$ with $\kappa$ ordinary cusps as its only singularities by the work of Barkats \cite{bar} for $d\geq 5$ (for $d=4$ there exists a plane curve with a unique cusp, i.e. the projection of a curve of genus $g=2$). 

Thus there exists an injective $g^2_d$ on a smooth genus $g$ curve. Now we discuss
why we may find some complete $g^2_d$. If $g=0$ (resp. $g=1$, resp. $g=3$) we take a smooth plane curve of degree $2$ (resp. $3$, resp. $4$).
If $g=2$, we take a degree $4$ plane curve with an ordinary cusp as its unique singular point; this linear series $g^2_4$ is complete,
because no genus $2$ curve has a $g^3_4$ (see e.g. \cite[Corollary IV.6.2]{Hart}). 

Assume $d\ge 5$ and $1\le \kappa\le d-3$. Let $Y\subset \PP^2$ be any integral plane curve with exactly $\kappa$ ordinary cusps as singularities.
Let $u: X\to Y$ be the normalization map and let $\phi: X\to \PP^2$ denote the composition of $u$ with the inclusion $Y\subset \PP^2$. Define $L = \phi^\ast (\Oo _{\PP^2}(1))$. 

We need to prove that $h^0(L) =2$. Letting $g = p_a(X)$, by Riemann-Roch, it is enough to show that $2-h^1(L) =d+1-g$, i.e. $h^1(L) =g+1-d$. Let $S = \mathrm{Sing}(Y)$. Since each singular point of of $X$ is an ordinary cusp, the classical Pl\"{u}cker formulas affirm that $g=(d-1)(d-2)/2 - \kappa$ \cite[page 280]{GH}. The equality is in fact derived from the cohomological equality $H^0(X,\omega _X) =\phi^\ast (H^0(\PP^2,\Ii _S(d-3))$. By Serre duality, $h^1(L) = h^0(\omega _X\otimes L^\ast)$. Since $H^0(X,\omega _X) =\phi^\ast (H^0(\PP^2,\Ii _S(d-3))$, we have 
$h^1(L) =g+1-d$ if and only if $h^1(\PP^2,\Ii _S(d-4)) =0$. For a general $S$ we have $h^1(\PP^2,\Ii _S(d-4)) =0$, because $\#S= \kappa \le (d-3)(d-2)/2 =h^0(\Oo _{\PP^2}(d-4))$.
\end{proof}

Besides Barkats' results \cite{bar} on the existence of curves with prescribed singularities, another more recent result is due to Greuel, Lossen, and Shustin \cite[Corollary 2.4]{gls3}. (Here we explicitly state a special case of the latter for cuspidal curves.)

\begin{theorem}[{\bf Greuel, Lossen, Shustin}]\label{g1}
Let $V(d,\kappa)$ denote the set parametrizing all irreducible plane curves $X\subset \PP^2$ with $\deg (X)=d$ and $\kappa$ ordinary
cusps as its only singularities. Assume $9\kappa < d^2+6d+8$. If $V(d,\kappa)\ne \emptyset$, then $V(d,\kappa)$ is smooth of pure
dimension $(d^2+3d)/2 -2\kappa$.
\end{theorem}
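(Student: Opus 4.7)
The plan is to follow the standard infinitesimal deformation/cohomological vanishing strategy for proving $T$-smoothness of Severi-type loci, specialized to the cuspidal case. I outline it in three steps.

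\textbf{Step 1 (Tangent-obstruction setup).} At a point $[X]\in V(d,\kappa)$, the deformations of $X$ inside $\PP^2$ preserving the analytic type of each singularity (which for ordinary cusps coincides with the equisingular type) are governed by a zero-dimensional scheme $Z^{ea}(X)\subset \PP^2$ supported on $\Sing(X)$. At each cusp $p$, the equianalytical ideal is the conductor-like ideal generated by the maximal ideal squared plus the tangent direction of the cusp; concretely $Z^{ea}(X)$ is curvilinear of length $2$ at each cusp, so $\deg Z^{ea}(X)=2\kappa$. By the Kodaira--Spencer type identification,
\[
T_{[X]}V(d,\kappa)\;\cong\; H^0(\PP^2,\Ii_{Z^{ea}(X)}(d))\big/k\langle F\rangle,
\]
where $F$ is a defining form of $X$. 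The expected codimension of $V(d,\kappa)$ in $|\Oo_{\PP^2}(d)|$ is $2\kappa$, so the expected dimension is $(d^2+3d)/2-2\kappa$.

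\textbf{Step 2 (Smoothness criterion).} The standard lifting argument shows that $V(d,\kappa)$ is smooth of the expected dimension at $[X]$ as soon as every first-order deformation lifts to all orders, and this is implied by the single cohomological vanishing
\[
H^1\!\left(\PP^2,\Ii_{Z^{ea}(X)}(d)\right)=0.
\]
Indeed, this vanishing makes $h^0(\Ii_{Z^{ea}(X)}(d))$ compute the naive count $\binom{d+2}{2}-2\kappa$, turns off the obstruction space for the equianalytical deformation functor on $\PP^2$, and combined with unobstructedness of local equianalytical deformations of ordinary cusps (whose $T^2$ vanishes) gives smoothness of $V(d,\kappa)$ at $[X]$.

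\textbf{Step 3 (Vanishing under $9\kappa<d^2+6d+8$).} The heart of the proof is establishing the vanishing above for every $[X]\in V(d,\kappa)$ in the specified range. I would argue by contradiction: if $h^1(\Ii_{Z^{ea}(X)}(d))>0$, then by a Cayley--Bacharach/residual argument there is a non-trivial subscheme $Z'\subseteq Z^{ea}(X)$ imposing dependent conditions on curves of degree $d-3$, hence $Z'$ is contained in the base locus of such a pencil, forcing many cusps to lie on a curve of controlled low degree $e$. Applying Bezout to this auxiliary curve and $X$ (accounting for the length-$2$ contribution of each cusp) yields a linear inequality between $\kappa$ and $d$; the sharpest such estimate, obtained by optimizing over $e$ and invoking a Horace-style specialization (in the spirit of Hirschowitz, used in \cite{gls3} and in the work of Ro\'e cited later in the paper), delivers precisely $9\kappa\ge d^2+6d+8$, contradicting the hypothesis. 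The factor $9$ reflects that each cusp contributes an effective length $2$ to $Z^{ea}$, while the optimal specialization concentrates roughly a third of the cusps on the auxiliary curve.

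\textbf{Main obstacle.} Steps 1 and 2 are standard and essentially formal once one has the correct equianalytical scheme for an ordinary cusp. The genuine difficulty is the sharp constant in Step~3: naive applications of Bezout or of a single residue sequence give only $\kappa=O(d^2)$ with a much worse leading coefficient. Obtaining $9\kappa<d^2+6d+8$ requires the careful Hirschowitz/Horace induction, tracking both the degree of the curve on which the specialized cusps lie and the length-$2$ scheme structure that forbids naive reduction to reduced points. This inductive bookkeeping is the step one cannot shortcut.
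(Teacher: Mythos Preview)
The paper does not contain a proof of this theorem at all: it is stated as a special case of \cite[Corollary 2.4]{gls3} and merely quoted, with the surrounding Remark \ref{glscurves} discussing related literature. So there is no ``paper's own proof'' to compare your proposal against.

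That said, your outline is a reasonable sketch of the strategy that actually underlies the Greuel--Lossen--Shustin result. Steps~1 and~2 are correct: the equianalytic (Tjurina) scheme of an ordinary cusp has length $\tau=2$, the expected codimension is $2\kappa$, and $T$-smoothness follows from the vanishing of $H^1(\Ii_{Z^{ea}(X)}(d))$. Where your Step~3 drifts from the actual argument is in the mechanism: the sharp bound in \cite{gls3} is obtained not by a Horace/Hirschowitz specialization but by controlling the Castelnuovo function of the zero-dimensional scheme $Z^{ea}(X)$ and using that a cusp imposes a length-$2$ \emph{curvilinear} scheme, which is the most favorable shape for such estimates. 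The constant $9$ comes from a combinatorial optimization over Castelnuovo functions rather than from ``concentrating roughly a third of the cusps on an auxiliary curve''. Your contradiction-via-B\'ezout description is closer in spirit to older, weaker bounds; it would not by itself yield exactly $9\kappa<(d+2)(d+4)$ without the Castelnuovo-function machinery. If you want to cite or reconstruct the proof, that is the ingredient to look up.
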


\begin{remark}\label{glscurves}
Irreducibility of $V(d,\kappa)\ne \emptyset$ requires $18\kappa < d^2$, see \cite[Corollary 3.2]{gls3}. The non-emptyness $V(d,\kappa)\ne \emptyset$ was proven over $\RR$ by Shustin \cite[Theorem 3.3]{sh1}. For all positive integers $d>0$, let $\kappa (d)$ denote the maximal integer such that for all $0\le \kappa \le \kappa(d)$ there exists $Y\in V(d,\kappa)$ defined over $\RR$ with 
$\mathrm{Sing}(Y)\subset \PP^2(\RR)$. One sees $\kappa(1) =\kappa(2)=0$, $\kappa(4)=3$, $\kappa(5)=5$ and
$\kappa(6)=7$ \cite[page 851]{sh1}. More generally: for all $d\ge 7$, one has $\kappa (d)\ge (d^2-3d+4)/4$ if $d\equiv 0,3\pmod{4}$ and $\kappa(d)
\ge (d^2-3d+2)/4$ if $d\equiv 1,2\pmod{4}$ \cite[Theorem 3]{sh1}. Concerning the smoothness of varieties parametrizing plane curves (not necessarily with only unibranch singularities) see the results in \cite[\S 4.3]{GLS}. The beautiful book \cite{GLS} contains an extensive bibliography about this venerable subject; in particular, more information about singular curves on more surfaces other than the projective plane are discussed.
\end{remark} 

\begin{remark}
The plane curves $X$ realized in Theorem \ref{g1} do not come from injective linear series $g^2_d$ of {\it type} {\bf II} or {\bf III} whenever 
\[
p_a(X) = (d-1)(d-2)/2 - \kappa > \pi(d,3). 
\]
Hence such injective linear series $g^2_d$ must be complete and so of {\it type} {\bf I}. Therefore, the curves from Theorem \ref{g1} are different from those arising in our Theorems \ref{pp1} and \ref{bb6}. 
\end{remark}

\section{Injective linear series on hyperelliptic curves}\label{hyperelliptic}

From the classical analytic definition of complex hyperelliptic curves, i.e., as the Riemann surface of the algebraic function $y=\sqrt{(x-a_1)\cdots (x-a_{2g+2})}$ \cite[\S 2]{ACGH}, it is clear that they admit a cuspidal model in $\PP^2$ (the cuspidal point being at infinity) and therefore they carry an injective linear series
from their very definition. However, we describe other natural injective linear series on any hyperelliptic curve in any characteristic. Their construction is highlighted in the course of the proof of the next Theorem \ref{a1}; to our knowledge this is new and interesting on its own right. The result is stated in proven first in characteristic zero, and in the subsequent Remark \ref{anychar} the general case is treated.

\begin{theorem}\label{a1}
Let $X$ be a smooth hyperelliptic curve of genus $g\ge 2$ over a field $k$ with $\textnormal{char}(k)=0$. Then there is a base-point free $g^2_{g+3}$ inducing an injective morphism $\phi: X \to \PP^2$. The image $\phi(X)$ has exactly two singular points: one ordinary cusp and one unibranch singularity. Moreover, $X$ has $\infty ^1$-many such $g^2_{g+3}$, each of them being a sublinear series of a different complete and very ample $g^3_{g+3}$; $X$ has $2g+2$ such one-dimensional families of $g^2_{g+3}$ and $g^3_{g+3}$.
\end{theorem}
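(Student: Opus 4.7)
For each of the $2g+2$ Weierstrass points $W\in X$, my approach is to produce a $1$-parameter family of line bundles $L_t$ of degree $g+3$ on $X$ admitting a complete, very ample $g^3_{g+3}$ and a distinguished sublinear $g^2_{g+3}$ that is base-point free and injective. The family will be built from $W$ and the hyperelliptic cover $\pi:X\to \PP^1$, in accordance with the parenthetical ``double cover of $\PP^1$'' in the theorem. Concretely, $L_t$ is sought in the form $\Oo_X(\alpha W+D_t)$ with $\alpha$ an integer depending on $g$ (chosen so that $h^0(L_t)=4$) and $D_t$ a $1$-parameter family of effective divisors of small degree whose class varies in $\mathrm{Pic}(X)$. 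The resulting $g^3_{g+3}=|L_t|$ embeds $X$ in $\PP^3$ as a curve $j_t(X)$ of degree $g+3$, and the sublinear $g^2_{g+3}$ is realized by projection from a specific center $p_t\in \PP^3\setminus j_t(X)$.

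\noindent\textbf{Cohomology and singularity analysis.} By Riemann--Roch, $h^0(L_t)=4+h^0(K_X-L_t)$, and $\deg(K_X-L_t)=g-5$. Since on a hyperelliptic curve every effective divisor with $h^0\ge 2$ contains a multiple of the hyperelliptic pencil $g^1_2$, the class $K_X-L_t$ is non-effective for generic $t$, so $h^0(L_t)=4$. Moreover, $L_t$ fails very ampleness iff $L_t\sim 3g^1_2+E$ for some effective $E$, which is also a proper closed condition; thus generic $L_t$ is very ample and defines an embedding $j_t:X\hookrightarrow \PP^3$. To control the singularities of $\phi_t$, the essential ingredient is the Weierstrass structure at $W$---the Weierstrass gap sequence $\{1,3,\ldots,2g-1\}$ at a hyperelliptic Weierstrass point---which governs the vanishing orders of sections of $L_t$ at $W$ and hence the contact of $j_t(X)$ with its osculating flag at $j_t(W)$. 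With $p_t$ chosen on an appropriate osculating line at $j_t(W)$, local coordinates render $\phi_t$ near $W$ as $t\mapsto (t^a,t^b)$ with $\gcd(a,b)=1$, a unibranch singularity; at a second distinguished point $P'_t$ (dictated by $p_t$ lying on a tangent line of $j_t(X)$ at $P'_t$), the local form is $(t^2,t^3)$, an ordinary cusp. The arithmetic-geometric genus identity
\[
p_a(\phi_t(X))-g \;=\; \binom{g+2}{2}-g \;=\; \frac{g(g+1)}{2}+1
\]
forces the unibranch $\delta$-invariant to equal $g(g+1)/2$ and rules out any additional singular points. Varying $W$ over the $2g+2$ Weierstrass points yields $2g+2$ distinct such families; distinctness is clear since the unibranch point of $\phi_t(X)$ has preimage $W$, which recovers the family.

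\noindent\textbf{Main obstacle.} The principal difficulty is selecting the data $(\alpha, D_t, p_t)$ so that $W$ becomes a flex-like point of $j_t$, producing the unibranch cusp with the prescribed local form, and then ruling out additional tangent or secant lines of $j_t(X)$ through $p_t$ by a dimension count on the tangent surface and secant variety of $j_t(X)\subset \PP^3$. Finally (as discussed in the subsequent remark), the arguments must be adapted to hold in arbitrary characteristic, which requires replacing the standard Pl\"ucker-type tools by positive-characteristic substitutes.
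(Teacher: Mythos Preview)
Your outline has the right shape---line bundle of degree $g+3$, embedding in $\PP^3$, projection from a carefully chosen point---but the step you flag as the ``main obstacle'' is where the argument actually fails. A dimension count on the secant variety cannot rule out extra secants through $p_t$: for any non-degenerate curve $C\subset\PP^3$ of degree $\ge 3$, the secant variety $\mathrm{Sec}(C)$ is all of $\PP^3$, so \emph{every} point $p_t\notin C$ lies on secant lines. Thus generic choice of $p_t$ on an osculating line gives no control whatsoever over injectivity, and your proposal does not contain a mechanism to get past this.

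The paper's proof supplies exactly the missing geometric idea. One takes $L=\Oo_X(2o+(g+1)p)$ with $o$ a Weierstrass point and $p\in X$ general (so your $\alpha W+D_t$ becomes $2o+(g+1)p$). The two pencils $|\Oo_X(2o)|=g^1_2$ and $|\Oo_X((g+1)p)|$ define a map $X\to\PP^1\times\PP^1$ of bidegree $(2,g+1)$, which is an embedding since the arithmetic genus of that bidegree equals $g$. Composing with the Segre embedding places $j(X)$ on a smooth quadric $Q\subset\PP^3$, and the projection center $q$ is taken on $Q\setminus j(X)$, namely at the intersection of the two ruling lines tangent to $j(X)$ at $j(o)$ and $j(p)$. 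Now B\'ezout does all the work: any line through $q$ meeting $j(X)$ with multiplicity $\ge 2$ must lie on $Q$, hence is one of the two ruling lines through $q$; by construction each of these meets $j(X)$ in a single point. That is what forces injectivity and simultaneously identifies the two singular images. In fact, as the paper later observes (via Tono's bound on cusps), for $g\gg 0$ the only centers giving a cuspidal projection lie on $Q$, so the quadric is not optional---without it your approach cannot be completed.
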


\begin{proof}
Let $\Ww$ be the set of Weierstrass points of $X$, i.e., the support of the ramification divisor of the $2:1$ cover $u_2: X\rightarrow \mathbb P^1$, induced by the linear series $g^1_2$ on $X$. (This exists on $X$, as it is hyperelliptic.) 

Fix a point $o\in \Ww$. Then $o\in \Ww$ if and only if $2o\in g^1_2$ by definition of ramification divisor. 
Thus $2 = h^0(g^1_2) = h^0(\Oo _X(2o))$. For each $p\in X\setminus \Ww$, set $N_p:= \Oo _X(2o+(g+1)p)$. We now split the proof into four claims. 

\begin{quote}
\noindent {\it Claim 1:} For a general $p\in X\setminus \Ww$, we have $h^0(\Oo _X(gp)) =1$, $h^0(\Oo _X(g+1)p)) =2$ and $h^0(N_p) =4$. \\

\noindent \emph{Proof of Claim 1:} By Riemann-Roch, we have $h^0(\Oo _X(gp)) =1+h^1(\Oo _X(gp))$, $h^0(\Oo _X(g+1)p)) =2+h^1(\Oo _X((g+1)p))$ and $h^0(N_p) =4+h^1(N_p)$.  Notice that $h^1(\Oo _X(2o+(g+1)p))\le h^1(\Oo _X((g+1)p)) \le h^1(\Oo _X(gp))$. Hence, in order to finish the proof of {\it Claim 1}, it is sufficient to prove that $h^1(\Oo _X(gp)) =0$. Indeed, under this assumption,  as $\textnormal{char}(k) = 0$, for any invertible sheaf $\mathcal N$ on $X$ and for a general $p\in X$, one has $h^0(\mathcal N(-tp)) = \max\lbrace 0, h^0(\mathcal N) - t \rbrace$, for any positive integer $t$. Letting $\mathcal N = \omega_X$ and $t=g$, 
we obtain $h^0(\omega_X(-gp)) = 0$. Finally, Serre duality gives $h^1(\Oo_X(gp)) = 0$. 
\end{quote}

\begin{quote}
\noindent {\it Claim 2:} For a general $p\in X$, $N_p$ is very ample.\\

\noindent \emph{Proof of Claim 2:} The base locus of $|\Oo _X((g+1)p)|$ is contained in $\{p\}$. Since $h^0(\Oo _X(gp)) < h^0(\Oo _X((g+1)p))$ by {\it Claim 1}, it follows that $p$ is not a base point of $|\Oo _X((g+1)p)|$, as subtracting a base point from a divisor does not decrease dimension of global sections. Thus $\Oo_X((g+1)p)$ is base-point free and so its linear series $|\Oo _X((g+1)p)|$ defines a degree $g+1$ morphism $u_1: X\to \PP^1$. As above, denote $u_2: X\to \PP^1$ the degree $2$ cover of $\PP^1$ induced by $g^1_2$ on $X$. The pair $(u_1,u_2)$ induces a morphism $w: X\to \PP^1\times \PP^1$. Now since $u_2$ is a degree $2$ cover, either $w$ is birational onto its image or it factors through $u_2$. The latter case is not possible, because $u_1$ cannot factor through $u_2$. Indeed, for the sake of contradiction, assume that $u_1$ factors through $u_2$. Then $\Oo_X((g+1)p)$ would be isomorphic to the invertible sheaf $(g^1_2)^{\otimes (g+1)/2}$. Since the dimension of the linear series of the latter is $(g+1)$ and $g\geq 2$, this isomorphism implies $h^0(\Oo_X((g+1)p)) > 2$, which contradicts {\it Claim 1} above. Hence $w$ is birational onto its image.
\end{quote}

Recall that the canonical sheaf of $\PP^1\times \PP^1$ is $\omega _{\PP^1\times \PP^1} \cong \Oo _{\PP^1\times \PP^1}(-2,-2)$. For $D\in |\Oo _{\PP^1\times \PP^1}(2,g+1)|$, by adjunction, $\omega_D \cong \omega_{\PP^1\times \PP^1} \otimes \Oo_{\PP^1\times \PP^1}(2,g+1)$. (For singular $D$, replace $\omega_D$ with the dualizing sheaf $\omega^{\circ}_D$ and every later statement holds as well.) This implies that the arithmetic genus of each $D\in |\Oo _{\PP^1\times \PP^1}(2,g+1)|$ is $p_a(D) =g$. 
 
Since $w$ is birational onto its image, $w(X)$ has bidegree $(2,g+1)$ and so $w(X)\in |\Oo _{\PP^1\times \PP^1}(2,g+1)|$. Since $w(X)$ has arithmetic genus $g$, the morphism $w$ is an embedding. 

The linear series $|\Oo _{\PP^1\times \PP^1}(1,1)|$ embeds $\PP^1\times \PP^1$ as a quadric surface in $\PP^3$. Call $f$ the composition of $w$ and the inclusion $\PP^1\times \PP^1\hookrightarrow \PP^3$. By construction, $f$ is the map induced by $|N_p|$; hence $N_p$ is very ample.

Take a very ample divisor of the form $N_p$ with associated embedding $f: X\to\PP^3$ and, as in the proof of {\it Claim 2}, regard $f(X) \subset \PP^1\times \PP^1$ as a divisor of bidegree $(2,g+1)$ on the quadric surface $\PP^1\times \PP^1$. Let $q\in \PP^1\times \PP^1$ be the point $(u_1({p}), u_2(o))$. For a general $p$, we may assume $u_1({p})\ne u_2(o)$. With this assumption, we show the next

\begin{quote}
\noindent {\it Claim 3:} We have $q\notin f(X)$.\\

\noindent \emph{Proof of Claim 3:} Assume $q\in f(X)$. The line $L_1:= \PP^1\times \{u_2(o)\}$ is tangent to $f(X)$ at $f(o)$
because it intersects $f(X)$ at $f(o)$  with multiplicity two. The line $L_2:= \{u_1(p)\}\times \PP^1$ is tangent to $f(X)$ at $u_2({p})$, because it intersects
$f(X)$ at $f(p)$ with multiplicity $g+1$. Since $L_1, L_2$ are lines in a different ruling of the quadric surface $\PP^1\times
\PP^1$, $L_1\ne L_2$ and $L_1\cap L_2$ is a single point. Note that $\{q\} = L_1\cap L_2$. Since $f(X)$ is smooth at $q$, it
has a unique tangent line at $q$. Thus $L_1= L_2$, which is a contradiction.
\end{quote}

Let $\pi_q: \PP^3\setminus \{q\} \to \PP^2$ denote the linear projection from $q$. Since $q\notin f(X)$, $\pi _{q|f(X)}$ induces a morphism $\phi: X\to \PP^2$. Since $\deg (u_2)=2$ and $u_1$ does not factor through $u_2$, $\phi$ is birational onto its image. To conclude the proof of the theorem, it is sufficient to prove the following claim.

\begin{quote}
\noindent {\it Claim 4:} The morphism $\phi$ is injective, $o$ and $p$ are the only ramification points of $\phi$, and $\phi(o)$ is an ordinary cusp of $\phi(X)$.\\

\noindent {\it Proof of Claim 4:} Since $f$ is an embedding and $\phi$ is induced by the linear projection from $q\in \PP^1\times \PP^1\setminus f(X)$, it is sufficient to prove that $|L\cap f(X)|\le 1$ for each line $L\subset \PP^3$ containing $q$. Fix a line $L\subset \PP^3$ such that $q\in L$ and $\deg (f(X)\cap L) \ge 2$. Since $q\notin f(X)$ and $f(X)\subset \PP^1\times \PP^1$, B\'{e}zout's theorem gives $L\subset \PP^1\times \PP^1$. Thus $L$ is one of the two lines of the smooth quadric $\PP^1\times \PP^1$ passing through $q$. One of these lines meet $f(X)$ only at $f(o)$ (with multiplicity two), whereas the other one meets $f(X)$ only at $f({p})$ (with multiplicity $g+1$). In both cases, the set-theoretic intersection $L\cap f(X)$ consists only of one point. Thus $\phi$ is injective. Moreover, by the discussion above, $o$ and $p$ are the only ramification points. Since $\phi^{-1}(o)$ is a curvilinear double point, $f(o)$ is a double point with one branch and so an ordinary cusp. 
\end{quote}

In conclusion, $X$ has $\infty ^1$-many $g^2_{g+3}$ base-point free linear series, corresponding to the morphism $\phi: X\rightarrow \PP^2$, as $p$ varies in $X\setminus \Ww$. (The degree of the linear series is indeed $g+3$, as $\phi$ is an injective morphism.) They sit inside a very ample $g^3_{g+3}$, given by $N_p$. Again, there are $\infty^1$-many of such, as $p$ varies in $X\setminus \Ww$. Moreover, $X$ has $2g+2$ such one-dimensional families of $g^2_{g+3}$ and $g^3_{g+3}$, given by the choice $o\in \Ww$. \end{proof}

Let $\sigma: X\to X$ be the {\it hyperelliptic involution} and let $R\in \mathrm{Pic}^2(X)$ be the hyperelliptic divisor of degree two, i.e., $g^1_2 =|R| = \{a+\sigma (a)\}_{a\in X}$. 

\begin{remark}[{\bf Arbitrary characteristic}]\label{anychar}
We summarize the ingredients providing a similar proof of Theorem \ref{a1} in arbitrary characteristic. Unless otherwise stated, the statements used in the proof are valid for any algebraically closed field $k$ and any $\textnormal{char}(k)$. 

Assume $k$ to be algebraically closed and $\textnormal{char}(k) = 2$. Then $1\le \deg(\Ww) \le g+1$, where each integer in this interval may occur for some hyperelliptic curve of genus $g$ (see, e.g., \cite[7.4.24]{Liu}, \cite[\S 6.2]{Stich}). In particular, in any characteristic, there is at least one ramification point. 

Independently of $\textnormal{char}(k)$, we show that if  $p\in X\setminus \Ww$, then $h^1(\Oo _X(gp)) = 0$.
\quad Since $X$ is hyperelliptic, the canonical map $\eta : X\to \PP^{g-1}$ has as its image the degree $g-1$ rational normal
curve and its fibers are the elements of $|R|$. Thus we have the following recipe to see if an effective divisor $D$ on $X$ is special.
Let $D'\supset D$ be the following effective divisor: for each $o\in \Ww$, let $m_o$ denote the multiplicity of $o$ in $\Ww$; the
multiplicity of $o\in D'$ is the minimal even integer $\ge m_o$. If $a\in X\setminus \Ww$ and $m_1, m_2\in \NN$ are the
multiplicities of $a$ and $\sigma (a)$ in $D$, then both $a$ and $\sigma (a)$ appears in $D'$ with multiplicity $\max
\{m_1,m_2\}$. By construction, $D'$ has even degree and it is the minimal divisor containing $\eta ^{-1}(\eta (D))$, where, given $D=\sum m_ip_i$,  we set
$\eta (D):= \sum m_i \eta (p_1)$. Let $k:= \deg (D')/2$. Note that $D'\in |R^{\otimes k}|$. By Serre duality $H^1(D)\cong H^0(K_X\otimes D^{\vee})^{\vee}$ and the latter is isomorphic to  $H^0(R^{\otimes (g-1)}\otimes D^{\vee})$. Hence $h^1(D) > 0$ if and only if $k\le g-1$. In particular, let $p\in X\setminus \Ww$ and $D = \Oo _X(gp)$. Hence $D' = \Oo _X(gp + g\sigma(p))$ and so $\deg(D') = 2g$. Thus $h^1(\Oo _X(gp)) =0$. 
\end{remark}

\begin{remark}\label{a2}
Take $f(X)$ as in the proof of Theorem \ref{a1}. Call $\Ss$ the set of all $q\in \PP^3\setminus f(X)$ such that the linear
projection $\pi _q: \PP^3\setminus \{q\}\to \PP^2$ induces an injective map $\phi_q: f(X)\to  \PP^2$. Call $Q$ the quadric
surface containing $f(X)$ as an element of $|\Oo _Q(2,g+1)|$.

\quad (i) We describe the set $\Ss \cap Q$. Fix $q=(q_1,q_2)\in Q\setminus f(X)$ and set $L_1:= \PP^1\times \{q_2\}\in
|\Oo _Q(0,1)|$ and $L_2:= \{q_1\}\times \PP^1\in
|\Oo _Q(1,0)|$. Let $L \subset \PP^3$ be a line such that $q\in L$ and $\deg (L \cap f(X))\ge 2$. Since $q\notin f(X)$, B\'{e}zout's theorem 
gives $L\subset Q$. Hence $L \in \lbrace L_1, L_2\rbrace$. Thus $q\in \Ss$ if and only if both $L_1$ and $L_2$ contain a
unique point of $f(X)$.  By definition of $f$, given in the proof of Theorem \ref{a1}, $L_1$ meets $f(X)$ at a unique
point, $a_1$, if and only if $a_1 =f(p_1)$ for some $p_1\in \Ww$. Recall that $h^0(\Oo _X((g+1)p))=2$, $w = (u_1, u_2)$ where
$u_1$ is induced by the linear series $|\Oo _X((g+1)p))|$. By definition of $f$ given in the proof of Theorem \ref{a1}, $L_2$ meets $f(X)$
at a unique point, $a_2$, if and only if $a_2 =f(p_2)$ for some $p_2\in X$ such that $\Oo _X((g+1)p_2)\cong \Oo _X((g+1)p)$. The number of these points may depend on $g$, $X$ and $p$. However, there is at least one such pair of points $(p_1,p_2)\in X\times X$, i.e., the pair $(o,p)$ ($\Ww\neq \emptyset$ in any characteristic). 

\quad (ii) Fix $q\notin Q$ and take a line $L$ such that $q\in L$ and $\deg (L\cap
f(X)) \ge 2$. Since $q\notin Q$, we have $L\nsubseteq Q$. Thus $\deg (L\cap f(X))=2$, by B\'{e}zout's theorem. In particular, each line $L$ through $q$ which is tangent to $f(X)$, say at a point $q'$, has order of vanishing two with $f(X)$ at $q'$, and $L \cap (f(X)\setminus \{q'\}) =\emptyset$. Thus any unibranch point of $\pi_q(f(X))$ is an ordinary cusp.  If the degree $g+3$ curve $\pi _q(f(X))$
is unibranch, then it has $(g+2)(g+1)/2 -g$ cusps. Tono \cite[Theorem 1.1]{tono} showed that 
a cuspidal plane curve has at most $(21g+17)/2$ cusps. Thus, since $(g+2)(g+1)/2 - g > (21g+17)/2$ for $g\gg 0$, one has $\Ss \subset Q$ for $g\gg 0$. A generalization of Tono's result to Hirzebruch surfaces was found by Moe \cite{Moe}. 
\end{remark}

\begin{proposition}\label{g+3inj}
Let $X$ be a hyperelliptic curve of genus $g\ge 3$. Take any non-special and base-point free $N\in \mathrm{Pic}^{g+3}(X)$ inducing an injective map $\phi : X\to \PP^3$. Either $\phi$ is an embedding and its image $\phi(X)$ is contained in a smooth quadric $Q$ as a divisor of bidegree $(2,g+1)$ or $(g+1,2)$, or  $\phi(X)$ is contained in a quadric cone.
\begin{proof}
Since $N$ is non-special, $h^1(N)=0$. Thus Riemann-Roch gives $h^0(N)=4$.

Set $M = N\otimes R^\vee \in \mathrm{Pic}^{g+1}(X)$. Fix $a\in X$. Since $h^0(N)=4$, we have $h^0(N(-a-\sigma (a))) \ge 2$ and so $h^0(M)\ge 2$. Since $\phi$ is injective, we have $\phi(a)\ne \phi(\sigma (a))$ for $a\in X\setminus \Ww$. Thus $h^0(M) = h^0(N)-2=2$. 

Since $\deg (M)=g+1$, and $h^0(M)=2$, Riemann-Roch implies that $M$ is non-special. 

Assume that $M$ has a base point, say $b\in X$. Since $h^0(M(-b)) =2$ and $\deg (M(-b)) =g$, $M(-b)$ is a special, with $h^0(M(-b))=2$. Thus $|M(-b)| = R\otimes E$ for a fixed effective divisor $E$ with $\deg (E) =g-2> 0$. 
Note that $M \cong R(E+b)$. Since by definition $M = N\otimes R^{\vee}$, tensoring by $R$ both sides yields $N\cong R^{\otimes 2}(E+b)$. 

Note that the divisor $E$ is a fixed component for the linear series associated to $N(-b)$. Indeed, this holds if and only if $h^0(N(-b-E)) = h^0(N(-b))$. Moreover, $N(-b-E) \cong R^{\otimes 2}$ and so $h^0(N(-b-E)) = h^0(R^{\otimes 2}) = 3$. Furthermore, $h^0(N(-b)) = 3 = h^0(N)-1$, as $N$ is base-point free. Since $N(-b-E)$ is base-point free and $h^0(N(-b-E)) = 3$, this induces a map from $X$ to $\PP^2$, which factor through $\phi$ (the morphism induced by $N$). More precisely, $N(-b-E)$ induces $\pi_{\phi(b)}\circ\phi: X\rightarrow \PP^2$, where $\pi_{\phi(b)}$ is the the linear projection with center the point $\phi(b)$. On the other hand, $R^{\otimes 2}$ has $\deg(R^{\otimes 2}) = 4$ and induces a $2:1$ cover $X\rightarrow \PP^1 \subset \PP^2$, where $\PP^1\subset \PP^2$ is a smooth conic. As $\phi$ is injective, $\pi_{\phi(b)}$ is a $2:1$ cover of a smooth conic. This is possible only if $\phi(X)$ is contained in a quadric cone such that $\phi(b)$ is a vertex. 

The map $\pi_{\phi(b)}\circ \phi$ sends all the points in the support $\supp(E)$ of $E$ to $\phi(b)$ (because $E$ is a fixed component of $N(-b)$) and since $\phi$ is injective, $\supp(E)\subseteq \lbrace b \rbrace$. Since $\deg(E) = g-2$, then $E = (g-2)b$. Hence $N = R^{\otimes 2}\otimes \Oo_X((g-1)b)$. 

Conversely for a general $b\in X$, the linear series  $|R^{\otimes 2}((g-1)b)|$ is non-special by Remark \ref{anychar} and gives an injective map with image contained in a quadric cone.

Suppose $M$ is base-point free and call $\psi: X\to \PP^1$ the morphism induced by $|M|$. Since $M$ is base-point free, $h^0(M(-b)) = h^0(M)-1=1$ for every $b\in X$. Then Riemann-Roch gives $h^1(M(-b)) =0$ for every $b\in X$. As in the proof of Theorem \ref{a1}, we see that $N$ induces an embedding with image contained in a smooth quadric $Q$ as a divisor of bidegree $(2,g+1)$ or $(g+1,2)$.
\end{proof}
\end{proposition}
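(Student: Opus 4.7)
Since $N$ is non-special of degree $g+3$, Riemann--Roch immediately gives $h^0(N)=4$, so $\phi$ embeds (or at least maps) $X$ into $\PP^3$ non-degenerately. The plan is to extract a pencil on $X$ naturally associated with $N$ and the hyperelliptic structure, by twisting down by the hyperelliptic bundle $R$. Set $M = N\otimes R^{\vee}\in \mathrm{Pic}^{g+1}(X)$. Since every fiber of the hyperelliptic cover is a divisor in $|R|$, for any $a\in X$ we have $h^0(N(-a-\sigma(a)))\ge h^0(N)-2 = 2$, giving $h^0(M)\ge 2$. For the reverse inequality, pick $a\in X\setminus \Ww$; by injectivity of $\phi$, the points $\phi(a)$ and $\phi(\sigma(a))$ are distinct, so the two conditions of vanishing at $a$ and at $\sigma(a)$ are independent on global sections of $N$. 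Thus $h^0(M) = 2$, and Riemann--Roch then forces $M$ to be non-special.

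The argument then splits on whether the pencil $|M|$ has a base point.  In the base-point free case, $M$ defines a morphism $\psi: X\to \PP^1$ of degree $g+1$, and pairing with the hyperelliptic cover $u_2: X\to \PP^1$ gives a morphism $w=(\psi,u_2): X\to \PP^1\times \PP^1 = Q$. I would show $w$ is birational onto its image exactly as in the proof of Theorem \ref{a1}: if $\psi$ factored through $u_2$, then $M$ would be a tensor power of $R$, which would force $h^0(M)>2$ for $g\ge 3$. Hence $w(X)$ is an irreducible curve of bidegree $(g+1,2)$ on $Q$, its arithmetic genus equals $g=p_a(X)$ by adjunction, and so $w$ is an embedding. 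Because $\Oo_Q(1,1)$ cuts out precisely $R\otimes M = N$, composing with $Q\hookrightarrow \PP^3$ recovers $\phi$; hence $\phi$ is an embedding with image a $(2,g+1)$- or $(g+1,2)$-divisor on $Q$.

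In the base-point case, let $b$ be a base point of $|M|$. Then $M(-b)$ has degree $g$ and $h^0(M(-b)) = 2$; by Clifford's theorem applied to this degree-$g$ pencil on a hyperelliptic curve, $M(-b)$ must be of the form $R(E)$ with $E$ an effective divisor of degree $g-2$. Thus $N\cong R^{\otimes 2}(E+b)$. The idea is to interpret projection of $\phi(X)\subset \PP^3$ from the point $\phi(b)$: this projection is induced by $|N(-b)|$, and one verifies $E$ is a fixed component of this linear series by comparing dimensions, giving $h^0(N(-b-E)) = h^0(R^{\otimes 2}) = 3$. Hence the projection from $\phi(b)$ is, after removing the fixed component $E$, the morphism to $\PP^2$ given by $R^{\otimes 2}$, which is the composition of the hyperelliptic $2{:}1$ map with an embedding of $\PP^1$ as a smooth conic. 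Since $\phi$ is injective, this can happen only if the projection $\pi_{\phi(b)}\colon \phi(X)\to \PP^2$ is generically $2{:}1$ onto a smooth conic, which forces $\phi(X)$ to lie on the quadric cone over that conic with vertex $\phi(b)$.

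The main obstacle I expect is the base-point case: one must be careful that the presence of $E$ as a fixed component, which in principle could ruin injectivity when $\deg E>0$, is compatible with $\phi$ being an injective morphism. The resolution is that $\supp(E)$ must be contained in $\{b\}$ (otherwise points of $\supp(E)\setminus\{b\}$ would be collapsed to $\phi(b)$ by the projection, contradicting injectivity of $\phi$), so in fact $E=(g-2)b$ and $N\cong R^{\otimes 2}((g-1)b)$, recovering precisely the cone case. Conversely one should check that for general $b$ this divisor is non-special and base-point free via the criterion recorded in Remark \ref{anychar}, to confirm the cone case is genuinely realized.
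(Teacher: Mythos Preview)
Your proposal is correct and follows essentially the same argument as the paper: define $M=N\otimes R^\vee$, show $h^0(M)=2$ using injectivity of $\phi$, and split on whether $|M|$ is base-point free, handling the smooth-quadric case via the product map to $\PP^1\times\PP^1$ and the cone case via projection from $\phi(b)$ and identification with $|R^{\otimes 2}|$. One minor terminological remark: the fact that the special degree-$g$ pencil $M(-b)$ has the form $R(E)$ is not Clifford's theorem per se but the standard structure result for special linear series on hyperelliptic curves (every special $g^r_d$ is $r\cdot g^1_2$ plus a fixed effective part); the paper invokes this directly rather than naming Clifford.
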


\begin{proposition}\label{g+3injbis}
Keep the notation from Proposition \ref{g+3inj}. Assume $\phi (X)$ is contained in a quadric cone. Then $\phi$ is an embedding if and only if $g=2$.
\begin{proof}
Recall that in this case $M=N\otimes R^{\vee}$ has $b$ as base point. 

Suppose $g=2$, then $\deg(N) = g+3 = 2g+1$ and so $N$ is very ample, and hence an embedding. Suppose $g>2$, then $E = (g-2)b$ is non-zero. Recall
that $E$ is the fixed component of $N(-b)$. Hence $h^0(N(-b)) = h^0(N(-2b))$. Thus $\phi$ is not an embedding, as $N$ does not divide tangent directions, i.e., the differential of $\phi$ is not injective at $b$. (Note that for $g=2$, $\phi(X)$ has degree $5$ \cite[Example V.2.9]{Hart}.) 

Assume $g\ge 3$ and that $\phi(X)$ is contained in a quadric cone  $\mathcal C$ with vertex $v=\phi (b)$ and take $q\in \mathcal C\setminus \phi(X)$. Here we check that the linear projection from $q$ does not induce an injective map $X\to \PP^2$. Call $R_q$ the unique line on $\mathcal C$ containing $q$. By B\'{e}zout's theorem, for each line $L$ containing $q$ and with $\deg (L\cap \phi(X))\ge 2$, we have $L\subset \mathcal C$. Recall that $v=\phi(b)$ is the vertex of $\mathcal C$. Since the vertex $\phi (b)\in\phi (X)$, the projection $\pi_q$ of $\phi(X)$ from $q$ is a cuspidal projection if and only if $R_q\cap \phi (X) =\{\phi (b)\}$ set-theoretically.  

We show this is not the case. The linear projection $\pi_v$ of $\phi(X)$ from $v$ is a $2:1$ morphism (away from $v\in \phi(X)$), whose image is a smooth conic, i.e., the base of the cone $\mathcal C$. Thus the ramification points of $\pi_v$ are
the images of the Weierstrass points $\phi(\Ww)$, the image of the ramification points of the covering map induced by $R$. However, since $b\notin \Ww$, the point $\phi(b)$ is not a ramification point. Thus $R_q$ cannot intersect $\phi (X)$ only at $\phi (b)$, i.e., $\pi_q$ is not a cuspidal projection. 
\end{proof}
\end{proposition}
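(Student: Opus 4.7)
The plan is to treat the two directions of the biconditional separately, exploiting the explicit description of $N$ that already emerges in the proof of Proposition \ref{g+3inj}.

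For the easy direction, assume $g=2$. Then $\deg(N)=g+3=5=2g+1$. I would invoke the standard fact that every line bundle of degree at least $2g+1$ on a smooth curve of genus $g$ is very ample. Since $\phi$ is the morphism associated to the complete linear series $|N|$, it is automatically an embedding.

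For the harder direction, suppose $g\ge 3$. The plan is to apply the separating-tangents criterion: $\phi$ is an embedding if and only if $h^0(N(-2a))=h^0(N)-2=2$ for every $a\in X$, and to verify that this criterion fails at the distinguished point $a=b$ produced in Proposition \ref{g+3inj}. Recall from that proof that, when $\phi(X)$ lies on a quadric cone, one has $N\cong R^{\otimes 2}((g-1)b)$ for some $b\in X\setminus \Ww$, and the fixed component of $|N(-b)|$ is precisely $E=(g-2)b$. Since $g-2\ge 1$, every section of $N(-b)$ must vanish at $b$ with multiplicity at least $1+(g-2)=g-1\ge 2$; hence the natural inclusions $N(-(g-1)b)\hookrightarrow N(-2b)\hookrightarrow N(-b)$ induce equalities on global sections, and so $h^0(N(-2b))=h^0(N(-b))=h^0(N)-1=3\ne 2$. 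This shows that the differential of $\phi$ is not injective at $b$, and therefore $\phi$ is not an embedding.

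The only delicate input is the structural fact $\Supp(E)\subseteq \{b\}$ inherited from Proposition \ref{g+3inj}, which itself rests on the injectivity of $\phi$; once it is granted, the rest of the argument reduces to the short dimension count above. As a natural companion remark, one can further check that no linear projection from a point $q\in \mathcal{C}\setminus \phi(X)$ gives an injective morphism $X\to \PP^2$ either: after discarding the fixed part $(g-2)b$ of $|N(-b)|$, the projection $\pi_v$ from the vertex $v=\phi(b)$ is induced by $|R^{\otimes 2}|$ and hence factors as the hyperelliptic double cover $u_2\colon X\to \PP^1$ followed by the Veronese embedding $\PP^1\to C$ onto the base conic $C$ of $\mathcal{C}$. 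Since $b\notin \Ww$, the fiber of $\pi_v\circ \phi$ through $v$ contains a second point, so every ruling through a non-vertex point $q\in \mathcal{C}$ cuts $\phi(X)$ in more than one point, forcing $\pi_q$ to be non-injective.
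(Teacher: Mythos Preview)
Your proof is correct and follows essentially the same approach as the paper: for $g=2$ you invoke very ampleness of degree $2g+1$ line bundles, and for $g\ge 3$ you use that $E=(g-2)b$ is the nonzero fixed part of $|N(-b)|$ to conclude $h^0(N(-b))=h^0(N(-2b))$, so $\phi$ fails to separate tangent vectors at $b$. Your companion remark about projections from $q\in\mathcal{C}\setminus\phi(X)$ likewise mirrors the paper's argument via the factorization of $\pi_v$ through the hyperelliptic double cover; the only cosmetic difference is that you phrase the obstruction in terms of the fiber through $b$, whereas the paper phrases it in terms of $\phi(b)$ not being a ramification point of $\pi_v$---but these are the same observation.
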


Recall that $\sigma: X\to X$ denotes the hyperelliptic involution and $R\in \mathrm{Pic}^2(X)$ is the hyperelliptic divisor of degree two, i.e., $g^1_2 =|R| = \{a+\sigma (a)\}_{a\in X}$. With this notation, we are ready to prove the next result. 

\begin{proposition}\label{a3}
Let $X$ be a hyperelliptic curve of genus $g\ge 3$. There is an injective morphism $f: X\to \PP^2$ with $\deg (f(X)) =g+2$ and each such map $f$ is induced by a complete linear series $|N|$ with $h^1(N) =0$ and $N \cong R(gp)$ with $p\in X$ such that $h^1(\Oo _X(gp)) =0$ (e.g., with $p$ general in $X$).
\end{proposition}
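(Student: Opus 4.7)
The argument splits into existence and characterization. For existence, fix $p\in X\setminus\Ww$ and set $N:=R(gp)$. A section of $\omega_X(-gp)=R^{\otimes(g-1)}(-gp)$ is the pullback via $\eta$ of a polynomial of degree $\le g-1$ on $\PP^1$ vanishing at $\eta(p)$ to order $g$, hence must vanish; so $h^1(\Oo_X(gp))=0$, and the analogous argument gives $h^1(R(gp))=0$. By Riemann--Roch $h^0(N)=3$ and $|N|$ is a complete $g^2_{g+2}$; similar vanishings show that $|N|$ is base-point free and that its order sequence at $p$ is $(0,g,g+1)$. Hence $v:=f(p)$ is a unibranch singular point of $f(X)$ of multiplicity $g$. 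The pencil in $|N|$ of sections vanishing at $p$ actually vanishes at $gp$ (since $h^0(N(-p))=h^0(N(-gp))$), so the rational map $\pi_v\circ f$ extends to a morphism $X\to\PP^1$ pulled back from the line bundle $N\otimes\Oo_X(-D)$ for some effective $D\ge gp$. Because $X$ of genus $g\ge 3$ admits no morphism to $\PP^1$ of degree $<2$, one has $D=gp$, $f^{-1}(v)=\{p\}$, and $\pi_v\circ f$ is induced by $R$ and coincides with $\eta$.

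\textbf{Injectivity.} If $f(q_1)=f(q_2)$ with $q_1\ne q_2$, then neither $q_i$ equals $p$ (lest the other belong to $f^{-1}(v)=\{p\}$); hence $\eta(q_1)=\eta(q_2)$ forces $q_2=\sigma(q_1)$ with $q_1\notin\Ww$. But $h^0(N(-q_1-\sigma(q_1)))=h^0(\Oo_X(gp))=1$ means $q_1+\sigma(q_1)$ imposes two conditions on $|N|$, so $f(q_1)\ne f(\sigma(q_1))$---a contradiction. Hence $f$ is injective.

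\textbf{Characterization.} Let $f:X\to\PP^2$ be injective of degree $g+2$, let $N:=f^*\Oo_{\PP^2}(1)$, and let $V\subseteq H^0(N)$ with $\dim V=3$ induce $f$. If $N$ were special, then $F:=\omega_X\otimes N^\vee$ would be effective, and fixing a representative identifies $N$ with the subsheaf $\omega_X(-F)\hookrightarrow\omega_X$; since every section of $\omega_X=\eta^*\Oo_{\PP^1}(g-1)$ is a pullback from $\PP^1$, every section of $N$ (hence every element of $V$) would be a pullback, so the induced map would factor through the $2$-to-$1$ cover $\eta$ and fail to be injective. Therefore $N$ is non-special, $h^0(N)=3$ by Riemann--Roch, and $V=H^0(N)$ is complete. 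Set $M:=N\otimes R^\vee$ of degree $g$; injectivity of $f$ on pairs $(a,\sigma(a))$ gives $h^0(M)=h^0(N)-2=1$, so $|M|$ contains a unique effective divisor $D_0$ of degree $g$. For every $a\in X$ the chord $L_a:=\overline{f(a)f(\sigma(a))}\subset\PP^2$ pulls back to $a+\sigma(a)+D_0\in|N|$, so every $L_a$ contains the scheme $f(D_0)$; the family $\{L_a\}_{a\in X}$ is non-constant (else $f(X)$ lies on a line, contradicting $\deg f(X)=g+2\ge 5$). A one-parameter family of lines in $\PP^2$ sharing two distinct points must be constant, so $\supp(D_0)=\{p\}$ is a single point, $D_0=gp$, and $N\cong R(gp)$. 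Finally $h^1(\Oo_X(gp))=0$ from $h^0(\Oo_X(gp))=h^0(M)=1$ and Riemann--Roch.

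\textbf{Main obstacle.} The trickiest step is the non-speciality of $N$ in the characterization, which hinges on the hyperelliptic fact that sections of any special line bundle on $X$ are pulled back from $\PP^1$ via $\eta$, forcing any sub-linear series of such an $|N|$ to factor through $\eta$ and hence fail to be injective. The remainder is a geometric elaboration of the observation that the degree-$2$ projection from the multiplicity-$g$ point of $f(X)$ must coincide with the hyperelliptic cover.
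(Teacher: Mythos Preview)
Your proof is correct and reaches the same conclusion as the paper, but the two arguments diverge in interesting ways at both the existence and characterization steps.

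For injectivity, the paper argues by direct case analysis: if $\phi(a)=\phi(p)$ one computes the base locus of $|R((g-1)p)|$ to force $a=p$, and if $\phi(a)=\phi(b)$ with $a,b\ne p$ one shows $R(gp-a-b)$ is special, hence $gp-a-b$ is effective, forcing $a=b=p$. You instead exploit the geometry of the image: the projection from $v=f(p)$ is exactly the hyperelliptic cover $\eta$, so $f(q_1)=f(q_2)$ forces $q_2=\sigma(q_1)$, and then the single condition $h^0(N(-q_1-\sigma(q_1)))=h^0(\Oo_X(gp))=1$ finishes. For the characterization, the paper writes $N\cong R(B)$ with $h^1(B)=0$ and observes that $|N(-p)|=|R|$ for each $p\in\supp B$, so all of $\supp B$ maps to one point and injectivity yields $B=gp$. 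Your chord argument is a pleasant geometric substitute: every secant $\overline{f(a)f(\sigma(a))}$ cuts out $a+\sigma(a)+D_0$, so the moving pencil of secants all contain $f(\supp D_0)$; two distinct points would pin down a unique line, hence $\supp D_0$ is a singleton. Both approaches are short; yours makes the role of the hyperelliptic involution more visible, while the paper's stays closer to linear-series bookkeeping and avoids invoking the projection from the singular point.
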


\begin{proof}
Since every special base-point free linear series on $X$ is composed with the $g^1_2$, each injective morphism $X\to \PP^2$
must be induced by a non-special base-point free linear series $g^2_d$. By Riemann-Roch this linear series is complete if and only if $d=g+2$, whereas if $d>g+2$ this $g^2_d$ is a linear subspace of a base-point free and non-special complete $g^{d-g}_d$.
\begin{quote}
\noindent {\it Claim 1:} For every $N\in \mathrm{Pic}^{g+2}(X)$, $g\ge 3$,  with $h^1(N)=0$ and $N$ base-point free
there is a degree $g$ effective divisor $B$ with $N\cong R(B)$ and $h^1(B)=0$. \\

\noindent \emph{Proof of Claim 1:} Fix $a\in X$. Since every degree two effective divisor of $X$ is contained in a prescribed $g^2_k$, there is a degree $g$ effective
divisor $B$ such that $a+\sigma (a) +B \in |N|$. Note that $a+\sigma (a)\in g^1_2$. Hence $h^0(N\otimes R^\vee )>0$. Take
$B\in |N\otimes R^\vee|$. If $h^1(B)>0$, then $B$ is special and so $B = R\otimes N'$, where $N'$ is some effective divisor of degree $g-2$. Thus
$N \cong R^{\otimes 2}\otimes N'$. Since $\deg (N') = g-2>0$, and $h^0(R^{\otimes 2})=3 =h^0(N)$, every point in the support of a divisor of $N'$
is a base point of $N$, a contradiction.
\end{quote}

Thus, so far we have shown that $N\cong R(B)$ for some effective divisor $B$ with $\deg (B)=g$ and $h^1(B)=0$. Since by assumption $R\otimes B$ induces
a map to $\PP^2$, it is base-point free. Note that $R(B)$ is base-point free if and only if $h^0(R(B-p))
=2$ for each $p$ in the support of $B$; indeed, since $R$ is base-point free, the base locus of $R(B)$ has to be contained in the support of $B$.
Moreover, since by assumption $N$ induces an injective morphism and $h^{0}(R(B-p)) = 2 = h^{0}(R)$, $|R(B)|$ maps all the points in the support of $B$ to the same point of $\PP^2$. Therefore $B =gp$ for some $p\in X$ (and such that $h^1(\Oo _X(gp)) =0$). 

Conversely, assume $h^1(\Oo _X(gp)) =0$ and set $N:= R(gp)$. Call $\phi: X\to \PP^2$ the morphism induced by the non-special
and base-point free linear series $|N|$. We claim that $\phi$ is injective. Fix $a, b\in X$ with $a\ne b$. First
assume $\phi(a) = \phi({p})$. Thus $|R(gp-a)| = |R((g-1)p)|$, with $h^{0}(R(g-1)p) = 2 = h^{0}(R)$. Hence $(g-1)p$ is the base locus of $R((g-1)p)$ and so of $R(gp-a)$. This implies $a=p$. 

Now assume $a\ne p$ and $b\ne p$. Since $\phi(a) = \phi(b)$, by definition $b$ is a base point of $|R(gp-a)|$. Thus $|R(gp-a)| =
\{b+E\}_{E\in |R(gp-a-b)|}$. Since $\deg (R(gp-a-b)) =g$ and $h^0(R(gp-a-b))=2$, by Riemann-Roch we obtain $h^1(R(gp-a-b))>0$ and hence
$R(gp-a-b)$ is a special divisor. Thus $R(gp-a-b) = R\otimes F$, for some effective
degree $g-2$ divisor $F$ on $X$. So $gp-a-b$ is an effective divisor. This is possible if and only if $a=b=p$, which is a contradiction.
\end{proof}

\section{Quadrics and cuspidal projections}\label{quadrics}

In this section, we study more closely cuspidal projections of curves lying on irreducible quadrics in $\PP^3$. In the first results the characteristic
of our ground field $k$ is arbitrary. Only later, we will switch to characteristic zero. We start off considering curves on smooth quadrics.

\begin{proposition}\label{b1}
Let $Q\subset \PP^3$ be a smooth quadric surface. Fix integers $1\le d_1\le d_2$ such that $d_2\ge 2$. Let $Y$ be an integral
element of $|\Oo _Q(d_1,d_2)|$ with only unibranch singularities and let $\phi: X\to Y$ be its normalization. Take $q\in Q\setminus Y$ and let $L\in
|\Oo _Q(1,0)|$ and
$L'\in |\Oo _Q(0,1)|$ be the unique lines of $Q$ through $q$. The linear projection from $q$ induces an injective map $\pi_q : Y\to
\PP^2$ (and hence an injective map $\eta =\pi_q \circ \phi: X\to \PP^2$) if and only if each $L$ and $L'$ contains a unique point
of $Y$. Moreover, the morphisms $\pi_q$ and $\eta$ are separable. 
\end{proposition}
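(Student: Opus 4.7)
The plan splits into two parts: (a) the iff characterization of injectivity in terms of the lines $L,L'$, and (b) separability. Before starting, I would reduce everything to statements about $\pi_q|_Y$. Since every singular point of $Y$ is unibranch, the normalization $\phi: X\to Y$ is a birational bijection on closed points; hence $\eta = \pi_q\circ \phi$ is injective on $X$ iff $\pi_q|_Y$ is injective on $Y$, and separability of $\eta$ follows from separability of $\pi_q|_Y$ (composing with the birational $\phi$).

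For (a), the key observation is the B\'ezout bound: any line $\ell \subset \PP^3$ not contained in the quadric $Q$ meets $Q$ in a zero-dimensional subscheme of length $2$. Suppose $p_1 \ne p_2$ in $Y$ satisfy $\pi_q(p_1)=\pi_q(p_2)$; then $q, p_1, p_2$ are three distinct collinear points of $Q$, so by B\'ezout the line they span lies on $Q$. Since $Q \cong \PP^1\times \PP^1$ contains exactly the two lines $L,L'$ through $q$, we conclude $|L\cap Y|\ge 2$ or $|L'\cap Y|\ge 2$. The converse is immediate: any two distinct points of $Y$ lying on $L$ (or $L'$) project through $q$ to the same point of $\PP^2$.

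For (b), I would examine the generic scheme-theoretic fiber of the dominant morphism $\pi_q|_Y: Y \to \pi_q(Y)$. Writing $\ell = \overline{pq}$ for $p\in \PP^2$, this fiber is $Y\cap \ell$. For $p$ outside a proper closed subset of $\pi_q(Y)$ (concretely, avoiding the finitely many $p$ whose line $\ell$ either lies on $Q$ or is tangent to $Q$ at $q$), the intersection $Q\cap \ell$ is the reduced length-$2$ scheme $\{q,q'\}$ with $q\ne q'$. Because $Y\subseteq Q$ and $q\notin Y$, the subscheme $Y\cap \ell$ is supported at $q'$ and has length at most $1$. Hence $\deg(\pi_q|_Y)=[k(Y):k(\pi_q(Y))]\le 1$; since $\pi_q|_Y$ is non-constant, this degree equals $1$, so $\pi_q|_Y$ is birational and in particular separable. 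Composing with $\phi$ yields separability of $\eta$ as well.

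The only genuinely delicate step is separability in (b): in positive characteristic, set-theoretic injectivity does not by itself preclude a purely inseparable map (think of Frobenius), so a naive point count is insufficient. The scheme-theoretic B\'ezout estimate on the generic fiber — based on $Y\subset Q$ and $q\notin Y$ — is what forces $\deg(\pi_q|_Y)=1$ and dispatches this concern.
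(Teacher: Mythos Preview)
Your argument is correct and follows essentially the same route as the paper: both the injectivity criterion and the separability claim rest on the B\'ezout observation that any line through $q$ other than $L,L'$ meets $Q$ in a length-$2$ scheme, one of whose points is $q\notin Y$. The only cosmetic difference is that the paper phrases separability via injectivity of the differential of $\eta$ at a general smooth point $p\in Y\setminus(L\cup L')$, whereas you phrase it via the generic scheme-theoretic fiber having length $1$; these are equivalent readings of the same computation.
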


\begin{proof}
Notice that $\pi_q$ is a morphism, because $q\notin Y$. Since $\phi$ is bijective and separable, and an isomorphism outside finitely many points of $X$, $\pi_q$ is injective (resp., separable) if and only if $\eta$ has the same property. If $\eta$ is injective then $|L\cap
Y|=|L' \cap Y|=1$. To show the converse, it is sufficient to prove that $|L''\cap Y|=1$ for each line $L''\subset \PP^3$
such that $q\in L''$ and $L''\notin \{L,L'\}$.

Now we  explain why $\pi_q$ and $\eta$ are separable morphisms. As mentioned above, $\eta$ is
separable if and only if $\pi_q$ is separable. Separability must be checked only if
$\textnormal{char}(k)>0$, as it is immediate in characteristic zero. Fix
$p\in Y_ {\mathrm{reg}}$ such that $p \notin L\cup L'$; call $L_p\subset \PP^3$ the line spanned by $\{q, p\}$.
Since $p\notin L\cup L'$, we have $L_p\notin \{L,L'\}$ and hence $\deg (L_p\cap Q) = 2$. Thus the differential
of $\eta$ at $p$ is injective. This shows that the differential is generically injective, and so $\eta$ is separable. 
\end{proof}

\begin{remark}
The proof of Proposition \ref{b1} shows that if $d_1>1$ there are only finitely many cuspidal projections. (Note that if $(d_1,d_2) \ne (2,2)$, having at least one cuspidal projection is a closed condition on the smooth curves of bidegree $(d_1,d_2)$.) For curves of bidegree $(1,d_2)$, which are smooth and rational, if there is a cuspidal
projection from a point $o$, then any point on $Q\setminus Y$ and on the line of bidegree $(1,0)$ containing $o$ induces a cuspidal projection.
\end{remark}

The next result shows how zero-dimensional schemes may naturally provide information on the {\it existence} of cuspidal projections and therefore 
injective linear series. (Note that, for $\textnormal{char}(k)=0$, its proof may be simplified using the classical Bertini's theorem \cite[Corollary III.10.9]{Hart}.)

\begin{theorem}\label{b2}
Fix integers $d_2\ge d_1>0$, a smooth quadric $Q\subset \PP^3$, lines $L\in |\Oo _Q(1,0)|$, $L'\in |\Oo _Q(0,1)|$ and set $\{q\}:=
L\cap L'$. Fix
$o\in L\setminus \{q\}$ and $o'\in L'\setminus \{q\}$. Let $Z$ (resp. $Z'$) be the divisor $d_2\cdot o = o+\cdots+o \subset L$ (resp., $d_1\cdot o'\subset L'$)
regarded as a degree $d_2$ (resp., degree $d_1$) zero-dimensional scheme. Then there is a smooth divisor $Y\in |\Oo _Q(d_1,d_2)|$ such that
$Y\cap L = Z$ and $Y\cap L'=Z'$ (set-theoretically, they intersect at a unique point).
\end{theorem}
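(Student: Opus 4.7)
The plan is to describe the linear system $\Sigma := |\Ii_{Z\cup Z'}(d_1,d_2)|$ explicitly and then extract a smooth member via Bertini's theorem. First I would fix coordinates $([s_0:s_1],[t_0:t_1])$ on $Q$ so that $L=\{s_0=0\}$, $L'=\{t_0=0\}$, $q=([0:1],[0:1])$, $o=([0:1],[1:0])$, and $o'=([1:0],[0:1])$. In these coordinates $Z$ is cut out on $L$ by $t_1^{d_2}$ and $Z'$ on $L'$ by $s_1^{d_1}$. Writing a section of $\Oo_Q(d_1,d_2)$ as a bihomogeneous form $F$ of bidegree $(d_1,d_2)$, the conditions $Y\supseteq Z$ and $Y\supseteq Z'$ force $F(0,1;t_0,t_1)$ to be a scalar multiple of $t_1^{d_2}$ and $F(s_0,s_1;0,1)$ to be a scalar multiple of $s_1^{d_1}$. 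A direct inspection of coefficients yields the explicit shape
\[
F \;=\; a\,s_1^{d_1}\,t_1^{d_2} \;+\; s_0\,t_0\,G,
\]
with $a\in k$ and $G\in H^0(\Oo_Q(d_1-1,d_2-1))$; in particular $\dim\Sigma = d_1 d_2$.

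From this explicit form I would read off the base locus: using that $|\Oo_Q(d_1-1,d_2-1)|$ is base-point free on $Q$, a short case analysis on where $s_0 t_0$ and $s_1^{d_1}t_1^{d_2}$ vanish gives that the set-theoretic base locus of $\Sigma$ is exactly $\{o,o'\}$ (scheme-theoretically $Z\cup Z'$). Bertini's theorem would then yield a member of $\Sigma$ smooth on $Q\setminus\{o,o'\}$: in characteristic zero this is immediate; in positive characteristic I would argue separability by noting that the morphism defined by $\Sigma$ on $Q\setminus\{o,o'\}$ factors as the Segre--Veronese embedding determined by $|\Oo_Q(d_1,d_2)|$ followed by a linear projection, so the Bertini statement for separable linear systems applies.

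Finally I would verify smoothness at $o$ and $o'$ via a local Jacobian computation. In the affine chart $s_1=t_0=1$ around $o$, with local coordinates $u=s_0$ and $v=t_1$, the equation becomes $F = a\,v^{d_2} + u\,G(u,1;1,v)$; for $d_2\geq 2$ smoothness at $o$ reduces to the non-vanishing of $G(0,1;1,0)$ (a single coefficient of $G$), while for $d_2=1$ it is automatic from $a\ne 0$. The analysis at $o'$ is symmetric. Each of these is a Zariski-open, non-empty condition on $(a,G)$, so a general $Y\in\Sigma$ is smooth everywhere; then $\deg(Y\cap L)=d_2=\deg Z$ together with $Y\supseteq Z$ forces $Y\cap L=Z$ scheme-theoretically, and likewise $Y\cap L'=Z'$. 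The step I expect to be the most delicate is the characteristic-free application of Bertini; as a safety net one can exhibit a concrete smooth member, for instance by taking $G=(s_0+s_1)^{d_1-1}(t_0+t_1)^{d_2-1}$ and checking the Jacobian criterion by direct polynomial computation.
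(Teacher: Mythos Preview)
Your approach is sound in characteristic zero and more explicit than the paper's. The paper never writes down coordinates: it computes $h^0(\Ii_{Z\cup Z'}(d_1,d_2))$ via the residual exact sequence of $L\cup L'$, then (for $d_1>1$) observes that $|\Ii_{Z\cup Z'}(d_1,d_2)|$ induces an embedding of $Q\setminus(L\cup L')$ because it contains all curves $F\cup L\cup L'$ with $F\in|\Oo_Q(d_1-1,d_2-1)|$ very ample, and finally handles the points of $L\cup L'$ by five separate dimension counts (labelled (i)--(v) there) of the form $h^0(\Ii_{Z\cup Z'\cup 2m}(d_1,d_2))$. Your coordinate description $F=a\,s_1^{d_1}t_1^{d_2}+s_0t_0G$ bypasses all of this: the base locus is visibly $\{o,o'\}$, and a single Jacobian computation at each base point replaces the five dimension counts. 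That is genuinely cleaner.

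The one real gap is your positive-characteristic step. Separability of the rational map defined by $\Sigma$ is \emph{not} sufficient for Bertini in characteristic $p$: one needs the map to be an immersion (unramified) on the locus where smoothness is claimed, and the factorization ``Segre--Veronese embedding followed by a linear projection'' does not deliver that. The fix is already implicit in your description and coincides with what the paper does on the relevant open set: for $d_1\ge 2$ the sub-linear-system $\{s_0t_0\,G : G\in H^0(\Oo_Q(d_1-1,d_2-1))\}$ defines an embedding of $Q\setminus(L\cup L')$ (since $s_0t_0$ is a unit there and $\Oo_Q(d_1-1,d_2-1)$ is very ample), so the characteristic-free Bertini theorem of Jouanolou applies on that open set; then note that for $a\ne 0$ the curve $Y$ meets $L\cup L'$ only at $o$ and $o'$, where your Jacobian check finishes. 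The case $d_1=1$ needs a short separate argument, as in the paper. Your ``safety net'' member with $G=(s_0+s_1)^{d_1-1}(t_0+t_1)^{d_2-1}$ is indeed smooth at $o$ and $o'$, but you have not verified global smoothness, and doing so directly is not obviously easier than the argument just sketched.
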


\begin{proof}
Since $h^1(\Oo _Q(d_1-1,d_2-1)) =0$, from the residual exact sequence
$$
0 \to \Oo _{Q}(d_1-1,d_2-1) \to \Ii _{Z\cup Z'}(d_1,d_2) \to \Ii _{Z \cup Z', L\cup L'}(d_1,d_2)\to 0,
$$
upon taking global sections, it follows $h^0(\Ii _{Z\cup Z'}(d_1,d_2)) = h^0(\Oo _Q(d_1,d_2)) -d_1-d_2 = d_1d_2+1$. Similarly, one can directly check that
$h^0(\Ii _{Z'}(d_1-1,d_2)) =d_1(d_2+1)-d_1$, and $h^0(\Ii _Z(d_1,d_2-1)) =(d_1+1)d_2 -d_2$. (This is the stabilization of the Hilbert function to the Hilbert polynomial.)
It is sufficient to prove that a general $Y\in |\Ii _{Z\cup Z'}(d_1,d_2)|$ is smooth. Since $h^0(\Ii _{Z'}(d_1-1,d_2)) =d_1(d_2+1)-d_1$ and $h^0(\Ii _Z(d_1,d_2-1)) =(d_1+1)d_2 -d_2$, neither $L$ nor $L'$ is an irreducible component of $Y$, by dimensional count.

First, assume $d_1>1$. Since
$|\Ii _{Z\cup Z'}(d_1,d_2)|$ contains all curves of the form $F\cup L\cup L'$, where $F\in |\Oo _Q(d_1-1,d_2-1)|$, and $\Oo _Q(d_1-1,d_2-1)$ is very ample,
the linear series $|\Ii _{Z\cup Z'}(d_1,d_2)|$ separates points and tangent vectors of $Q\setminus (L\cup L')$, i.e., it induces an
embedding
$\psi : Q\setminus (L\cup L')\to \PP^{d_1d_2}$. For any point $p\in Q$, let $2p$ denote the degree $3$ zero-dimensional subscheme of $Q$ whose ideal sheaf is $(\Ii _{p,Q})^2$. Since $|\Ii _{Z\cup Z'}(d_1,d_2)|$ is irreducible, its general element is smooth outside the locus $L\cup L'$. Therefore, in order to establish the result, it is sufficient to prove the following statements:
\begin{enumerate}
\item[(i)] $h^0(\Ii _{Z\cup Z'\cup 2o}(d_1,d_2)) \le d_1d_2$;
\item[(ii)] $h^0(\Ii _{Z\cup Z'\cup 2o'}(d_1,d_2)) \le d_1d_2$;
\item[(iii)] $h^0(\Ii _{Z\cup Z'\cup 2q}(d_1,d_2)) \le d_1d_2$;
\item[(iv)] for each $m\in L\setminus \{q,o\}$ we have $h^0(\Ii _{Z\cup Z'\cup 2m}(d_1,d_2)) \le d_1d_2 -1$;
\item[(v)] for each $m'\in L'\setminus \{q,o'\}$ we have $h^0(\Ii _{Z\cup Z'\cup 2m'}(d_1,d_2)) \le d_1d_2 -1$.
\end{enumerate}

To show (i) and (ii), fix $F\in |\Oo _Q(d_1-1,d_2-1)|$ such that $o\notin F$ and $o'\notin F$. Notice that $F\cup L\cup L' \in |\Ii _{Z\cup Z'}(d_1,d_2)|$. 
Since $L\cap {Z\cup Z'\cup 2o}$ has degree $d_2+1$, $h^{0}(I_{Z\cup Z'\cup 2o}(d_1, d_2)) = h^{0}(I_{Z\cup Z'}(d_1,d_2-1)$, as every global section in  
$I_{Z\cup Z'\cup 2o}$ contains $L$. Similarly for $L'$ with $o'$. Now, $h^{0}(I_{Z\cup Z'}(d_1,d_2-1), h^{0}(I_{Z\cup Z'}(d_1-1,d_2)\leq d_1d_2$ and so (i) and (ii) are proven. 

To show (iii), let $G\in |\Ii _{Z\cup Z'\cup 2q}(d_1,d_2)|$. Since $\deg (L\cap Z\cup Z'\cup 2q)=d_1+2$, $L$ is a component of $G$. Since $\deg (L'\cap Z\cup Z'\cup 2q)=d_1+2$, $L'$ is a component of $G$. Thus $h^0(\Ii _{Z\cup Z'\cup 2q}(d_1,d_2)) =h^0(\Oo _Q(d_1-1,d_2-1))=d_1d_2$.

For (iv), fix $m\in L\setminus \{q,o\}$. Since $(Z\cup Z' \cup 2m)\cap L$ is the union of $Z$ and the degree two effective divisor of $L$ with $m$ as its support, we have $\mathrm{Res}_L(2m\cup Z\cup Z') = \{m\}\cup Z'$. Thus we have the following residual exact sequence of $L$ in $Q$:
\begin{equation}\label{eqb1}
0 \to \Ii _{Z'\cup \{m\}}(d_1-1,d_2) \to \Ii _{Z\cup Z'\cup 2m}(d_1,d_2) \to \Ii _{(Z\cup 2m)\cap L,L}(d_1,d_2)\to 0
\end{equation}
Since $\deg (Z\cup 2m)\cap L)=d_2+2$, we have $h^0(L, \Ii_{(Z\cup 2m)\cap L,L}(d_1,d_2))=0$. Thus, taking global sections, the exact sequence of sheaves (\ref{eqb1}) gives $h^0(\Ii _{Z\cup Z'\cup 2m}(d_1,d_2)) = h^0(\Ii_{Z'\cup \{m\}}(d_1-1,d_2))=d_1(d_2+1)-d_1-1$.  Similarly, (v) is derived using the residual exact sequence of $L'$.

Now assume $d_1=1$. Take any $Y\in |\Oo _Q(1,d_2)|$ and assume that $Y$ has a singular point $z\in Y$. Let $R_z$ be the element of $|\Oo _Q(1,0)|$ passing through $z$. Since $Y$ is singular at $z$ and $d_1=1$, B\'{e}zout's theorem gives $Y =R_z\cup G$, for some $G\in |\Oo _Q(0,d_2)|$. If $Y\in |\Ii _{Z\cup Z'}(1,d_2)|$ and $R_z\cap \{o,o'\}=\emptyset$ (this condition holds for a general $Y$), we obtain $G\in |\Ii _{Z\cup Z'}(0,d_2)|$. Notice that $h^{0}(\Ii _{Z\cup Z'}(0,d_2))\leq d_2$. Thus a general $Y\in |\Ii _{Z\cup Z'}(1,d_2)|$ is smooth outside $\{o,o'\}$. As it was shown for (i) and (ii), one verifies that the general $Y$ is smooth at $o$ and $o'$.
\end{proof}

\begin{remark}\label{b3}
Let $d_1=1$, $d_2=d-1\ge 2$ and $Y$ be a smooth rational curve. By Proposition \ref{b1} and Theorem \ref{b2}, we obtain that for each integer
$d\ge 3$ there is a smooth rational curve $Y\subset \PP^3$ with $\deg (Y)=d$ and admitting $\infty ^1$ cuspidal projections to $\PP^2$. Compare this observation with the statement \cite[(a) of Remark, p. 102]{Piene} saying that, for $d\ge 5$, no smooth degree $d$ rational curve has a cuspidal projection with as its image a plane curve with only ordinary cusps.
\end{remark}

We now establish Proposition \ref{b1} and Theorem \ref{b2} for quadric cones in $\PP^3$. 
\begin{proposition}\label{b4}
Let $\mathcal C\subset \PP^3$ be a quadric cone with vertex $v$. Fix an integer $d\ge 2$. Let $Y\in |\Oo _Y(d)|$ be an integral curve
with only unibranch singularities and let $\phi: X\to Y$ be its normalization. Fix $q\in \mathcal C\setminus Y$ with $q\ne v$ and let $R_q$ be the line of $\mathcal C$
containing $q$. The linear projection from $q$ induces a cuspidal projection of $Y$ (and hence of $X$ taking the composition with the injective map $\phi$)
if and only if $|R_q\cap Y| =1$.
\end{proposition}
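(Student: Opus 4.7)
The plan is to mimic the proof of Proposition \ref{b1}, with the main geometric input being that on a quadric cone $\mathcal C$ the single ruling plays the role that the two rulings played on the smooth quadric $Q$. Since $q\in\mathcal C$ and $q\ne v$, the unique line of $\mathcal C$ passing through $q$ is $R_q$ (and it also passes through $v$). Because $q\notin Y$, the projection $\pi_q$ is a morphism, and since $\phi:X\to Y$ is a bijective normalization, $\pi_q$ is injective (resp.\ separable) if and only if $\eta=\pi_q\circ\phi$ is.

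For the injectivity equivalence, suppose first that $\pi_q$ is injective; then $R_q\cap Y$ must be a single point, since otherwise distinct points of $R_q\cap Y$ would all collapse to the image of $R_q$ under $\pi_q$. Conversely, assume $|R_q\cap Y|=1$ and let $L\subset\PP^3$ be any line with $q\in L$ and $\deg(L\cap Y)\ge 2$. Since $q\notin Y$ but $Y\subset\mathcal C$, B\'ezout's theorem forces $L\subset\mathcal C$; as $q\ne v$, the only line of $\mathcal C$ through $q$ is $R_q$, so $L=R_q$. Thus $|L\cap Y|=1$ for every line $L\ne R_q$ through $q$, showing $\pi_q$ (hence $\eta$) is injective.

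For separability (only relevant when $\mathrm{char}(k)>0$), pick a smooth point $p\in Y_{\mathrm{reg}}$ with $p\notin R_q$ and denote by $L_p\subset\PP^3$ the line spanned by $\{q,p\}$. Since $p\notin R_q$ we have $L_p\ne R_q$, and as $R_q$ is the unique line of $\mathcal C$ through $q$, we get $L_p\not\subset\mathcal C$, hence $\deg(L_p\cap\mathcal C)=2$. Therefore $L_p\cap Y\subset L_p\cap\mathcal C$ has degree at most $2$, so for generic such $p$ the line $L_p$ is not tangent to $Y$ at $p$ and the differential of $\eta$ at $p$ is injective. This generic injectivity of the differential yields separability of $\eta$ and of $\pi_q$.

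The main (mild) obstacle is to be careful at the vertex: a priori, a line $L$ through $q$ could contain $v$, but any line on $\mathcal C$ through $q$ automatically is a ruling line through $v$, so nothing new occurs, and the argument above covers all cases. Apart from this, the proof proceeds exactly in parallel with Proposition \ref{b1}, simply replacing the pair of rulings $\{L,L'\}$ by the single ruling line $R_q$.
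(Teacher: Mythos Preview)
Your proof is correct and follows essentially the same route as the paper: both use B\'ezout's theorem on $L\cap\mathcal C$ (getting degree $\ge 3$ from $q\in\mathcal C\setminus Y$ together with $\deg(L\cap Y)\ge 2$) to force $L\subset\mathcal C$ and hence $L=R_q$, since $q\ne v$ lies on a unique ruling. The paper's proof of this proposition actually omits the separability paragraph (it was already given for the smooth quadric in Proposition~\ref{b1}), so your inclusion of it is a harmless addition; one tiny slip is that ``$|L\cap Y|=1$'' should read ``$|L\cap Y|\le 1$'' for lines $L\ne R_q$ through $q$, but this does not affect the argument.
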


\begin{proof}
Take a line $L\subset \PP^3$ such that $q\in L$ with $\deg (Y\cap L)\ge 2$. Since $q\in \mathcal C\setminus Y$, we have $\deg (L\cap \mathcal C)\ge 3$. Thus $L\subset \mathcal C$, by B\'{e}zout's theorem. Since $q\in L$, $L=R_q$.
\end{proof}

\begin{theorem}\label{b4.1}
Let $\mathcal C\subset \PP^3$ be a quadric cone with vertex $v$. Fix an integer $d\ge 2$ and $q\in \mathcal C\setminus \{v\}$. Let $R_q\subset \mathcal C$ be the line spanned by $\{v,q\}$. Then there is a smooth divisor $Y\in |\Oo _{\mathcal C}(d)|$ such that $v\notin Y$, $q\notin Y$ and $R_q$ meets $Y$ at a unique point.\end{theorem}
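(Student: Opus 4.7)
The proof should parallel that of Theorem~\ref{b2}, adapted from the smooth quadric to the quadric cone $\mathcal C$. I would fix an auxiliary point $o\in R_q\setminus\{v,q\}$ (which lies in the smooth locus of $\mathcal C$), and let $Z:=d\cdot o$ be the length-$d$ zero-dimensional scheme on $R_q\cong\PP^1$, viewed as a subscheme of $\mathcal C$. The target is to exhibit a smooth $Y\in|\Ii_Z(d)|$ on $\mathcal C$ that does not contain $R_q$; granted this, the proper intersection $Y\cap R_q$ has length $\deg(\Oo_{\mathcal C}(d)|_{R_q})=d$ and contains $Z$, hence equals $Z$ scheme-theoretically. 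In particular $Y\cap R_q=\{o\}$ set-theoretically, and $v,q\notin Y$ follow at once.

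First I would compute dimensions. From $0\to\Oo_{\PP^3}(d-2)\to\Oo_{\PP^3}(d)\to\Oo_{\mathcal C}(d)\to 0$ one obtains $h^0(\mathcal C,\Oo_{\mathcal C}(d))=(d+1)^2$. Since degree-$d$ forms on $\PP^3$ restrict surjectively to the line $R_q$, the restriction $H^0(\mathcal C,\Oo_{\mathcal C}(d))\to H^0(R_q,\Oo_{R_q}(d))$ is also surjective. The condition $F|_{R_q}\in k\cdot(t-\lambda_o)^d$, where $\lambda_o$ is an affine coordinate of $o$ on $R_q\cong\PP^1$, then imposes $d$ independent conditions, giving $h^0(\mathcal C,\Ii_Z(d))=(d+1)^2-d$. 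The sub-linear system of those $F$ with $F|_{R_q}\equiv 0$ is strictly smaller, as witnessed by $F_0:=(x_3-\lambda_ox_0)^d$ whose restriction to $R_q$ equals $(t-\lambda_o)^d\ne 0$; hence a general $Y\in|\Ii_Z(d)|$ does not contain $R_q$.

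For generic smoothness on $\mathcal C\setminus\{v\}$, I would mimic the $h^0$-bounding technique of Theorem~\ref{b2}: for each smooth point $p$ of $\mathcal C$, bound $h^0(\Ii_{Z\cup 2p}(d))$ strictly below $h^0(\Ii_Z(d))$, where $2p$ denotes the first infinitesimal neighborhood of $p$ in $\mathcal C$. Three cases: (a) for $p\in\mathcal C\setminus(R_q\cup\{v\})$, produce elements in $H^0(\PP^3,\Ii_{R_q}(d))$ realizing prescribed $1$-jets at $p$ (these are automatically in $\Ii_Z(d)$); (b) for $p\in R_q\setminus\{o,v\}$, residuate along the Cartier divisor $2R_q\sim\Oo_{\mathcal C}(1)$ (the tangent plane section of $\mathcal C$ along $R_q$), mirroring items (iv)--(v) in the proof of Theorem~\ref{b2}; (c) at $p=o$ itself, since smoothness is an open condition on $|\Ii_Z(d)|$, it suffices to display one $Y_0\in|\Ii_Z(d)|$ smooth at $o$. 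The explicit choice $F_0':=(x_3-\lambda_ox_0)^d+x_2x_0^{d-1}$ works: in analytic local coordinates $(t,s)$ on $\mathcal C$ near $o$ with $R_q=\{s=0\}$, one computes $F_0'|_{\mathcal C}=t^d+s$, smooth at the origin. A routine incidence/dimension count using (a)--(c) then delivers a general $Y$ smooth on $\mathcal C\setminus\{v\}$, and $v\notin Y$ is automatic from $Y\cap R_q=\{o\}$.

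The main obstacle I expect is the arbitrary-characteristic treatment: Bertini is unavailable as a black box, and the $h^0$-inequalities must be checked pointwise. Case (b) is the most delicate, since $R_q$ is not Cartier on $\mathcal C$ while $|\Ii_Z(d)|$ already encodes the high-contact condition $d\cdot o$ on $R_q$; the residuation must therefore be performed via the Cartier divisor $2R_q$, with careful bookkeeping of the scheme structure, in direct analogy with the estimates carried out in the proof of Theorem~\ref{b2}.
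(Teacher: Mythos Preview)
Your plan is sound and follows a genuinely different route from the paper. The paper passes to the minimal desingularization $\eta:\mathbb{F}_2\to\mathcal{C}$ and works entirely on the smooth Hirzebruch surface: the scheme $Z$ pulls back to a length-$d$ scheme $A$ on a fiber $R_A\in|F|$, and one studies $|\mathcal{I}_A(dH+2dF)|$. Since $(d-1)H+(2d-1)F$ is very ample, the system contains $H\cup R_A\cup G$ for all such $G$ and hence induces an embedding off $H\cup R_A$; Jouanolou's characteristic-free Bertini then yields smoothness there in one stroke. Smoothness at the support of $A$ is handled by an explicit element (as in your case~(c)), and $v\notin Y$ is read off from the intersection number $W\cdot H=(dH+2dF)\cdot H=0$ on $\mathbb{F}_2$. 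Working upstairs buys a smooth ambient surface and Cartier rulings, so every residuation is standard.

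Your direct approach on $\mathcal{C}$ avoids the birational model and proceeds by pointwise $h^0$-estimates. Two remarks. First, your case~(b) is actually superfluous: you already show that a general $Y$ does not contain $R_q$, and for such $Y$ the equality $Y\cap R_q=Z$ forces $Y$ to miss every point of $R_q\setminus\{o\}$, including $v$; nothing further is needed along $R_q$. If you still want the estimate, it is simpler to note that any section in $H^0(\mathcal{I}_{Z\cup 2p}(d))$ restricts to a degree-$d$ form on $R_q$ with at least $d+1$ zeros, hence vanishes identically on $R_q$, and then impose the single transverse-derivative condition at $p$; this bypasses the non-reduced divisor $2R_q$ altogether. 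Second, in case~(c) your form $F_0'$ is smooth at $o$ only if the hyperplane $\{x_2=0\}$ is \emph{not} the tangent plane to $\mathcal{C}$ along $R_q$ (otherwise $x_2|_{\mathcal{C}}$ vanishes to order two on $R_q$ and your local expansion becomes $t^d+s^2\cdot(\mathrm{unit})$); this is a harmless coordinate choice but should be stated.
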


\begin{proof}
Fix $p\in R_q\setminus \{v\}$. Let $Z = d\cdot p$ be the effective divisor of degree $d$ of $R_q$ supported at $p\in \mathcal C$, regarded as a zero-dimensional subscheme of $\mathcal C$. It is sufficient to prove that a general element of $|\Ii _Z(d)|$ is smooth and it does not contain the vertex $v$.

Let $\eta : \mathbb F_2\to \mathcal C$ be the minimal desingularization of $\mathcal C$; here $\mathbb F_2$ denotes the second {\it Hirzebruch surface}: this is the rational ruled surface $\PP(\mathcal \Oo_{\PP^1}\oplus \Oo_{\PP^1}(-2))$. Thus we have a projection map $\pi: \mathbb F_2\to \PP^1$ and a section $H:= \eta ^{-1}(v)$ with self-intersection $H^2=-2$. Its Picard group $\mathrm{Pic}(\mathbb F_2)$ is freely generated by the Cartier divisors $H$ and a fiber $F$ of $\pi$, with $F\cdot H = 1$ and $F^2 = 0$ \cite[Proposition V.2.3]{Hart}. 

The linear series $|\Oo _{\mathbb F_2}(H+2F)|$ is base-point free, induces $\eta$ and indeed contracts $H$ to a point. Hence $\eta^\ast(\Oo _{\mathcal C}(d)) = \Oo _{\mathbb F_2}(dH+2dF)$, for every $d\geq 1$. In fact, $\eta ^\ast (H^0(\Oo _{\mathcal C}(d))) = H^0(\Oo _{\mathbb F_2}(dH+2dF))$. Indeed, since $\mathcal C$ is a quadric and it is projectively normal, one has $h^0(\Oo _{\mathcal C}(d)) =h^0(\Oo _{\PP^3}(d)) -h^0(\Oo _{\PP^3}(d-2)) =(d+1)^2$. On the other hand, note that $\pi _\ast \Oo _{\mathbb F_2}(dH) \cong \textnormal{Sym}^d(\Oo _{\PP^1}\oplus \Oo _{\PP^1}(-2)) \cong \bigoplus_{i=0}^{d} \Oo _{\PP^1}(-2i)$ by \cite[Proposition V.2.8.]{Hart} and  \cite[Exercise III. 8.4]{Hart}. As $F$ is a fiber of $\pi$ over $\PP^1$, $\Oo _{\mathbb F_2}(F) \cong \pi ^\ast (\Oo _{\PP^1}(1))$. The projection formula \cite[Exercise II. 5.1]{Hart} gives the isomorphism $\pi _\ast (\Oo _{\mathbb F_2}(dH+2dF)) \cong \oplus _{i=0}^{d}\Oo _{\PP^1}(2d-2i)$. Now, the dimension of the space of global sections of the latter coincides with $h^{0}(\Oo _{\mathcal C}(d))$. Thus $\eta^\ast(H^{0}(\Oo _{\mathcal C}(d))) = H^{0}(\Oo _{\mathbb F_2}(dH+2dF))$.

Since $p\ne v$, the scheme $A:= \eta ^{-1}(Z)$ is a degree $d$ zero-dimensional scheme and $h^0(\mathcal C,\Ii _Z(d)) =h^0(\mathbb F_2,\Ii _A(dH+2dF))$. Therefore, to prove the statement it is sufficient to prove that a general $W\in |\Ii _A(dH+2dF)|$ is smooth and $H\cap W =\emptyset$. 

Let $R_A\in |\Oo _{\mathbb F_2}(F)|$ denote the element containing $A$ (i.e., it is the strict transform of $R_q$). The residual exact sequence of $R_A$ in $\mathbb F_2$ gives the exact sequence
\begin{equation}\label{eqb2}
0 \to \Oo _{\mathbb F_2}(dH+(2d-1)F)\to \Ii _A(dH+2dF) \to \Ii _{A,R_A}(dH+2dF)\to 0.
\end{equation}
With analogous computations as above, we have $\pi _\ast (\Oo _{\mathbb F_2}(dH+(2d-1)F)) \cong \oplus _{i=0}^{d} \Oo _{\PP^1}(2d-1-2i)$ 
and so $h^1(\pi _\ast (\Oo_{\mathbb F_2}(dH+(2d-1)F))=0$. By \cite[Lemma V.2.4]{Hart}, we have $H^1(\Oo_{\mathbb F_2}(dH+(2d-1)F)))\cong H^1(\PP^1,\pi _\ast (\Oo_{\mathbb F_2}(dH+(2d-1)F)) = 0$ and hence  $h^1( \Oo _{\mathbb F_2}(dH+(2d-1)F)) =0$. 

Since $R_A\cong \PP^1$ and $A$ is a zero-dimensional scheme of degree $d$, $\Oo _{R_A}(dH+2dF)$ has degree $d$ and so $h^0(R_A, \Ii_{A,R_A}(dH+2dF))=1$. Thus $W\in |\Ii _A(dH+2dF)|$ containing $R_A$ are of codimension one. Therefore, the general $W$ does not contain $R_A$, which yields that $\eta(W)$ does not intersect $R_q$ outside $Z$. 

The divisor $\Oo _{\mathbb F_2}((d-1)H+(2d-1)F)$ is very ample. Since $H\cup R_A\cup G\in |\Ii _A(dH+2dF)|$ for all $G\in |\Oo _{\mathbb F_2}(d-1)H+(2d-1)F)|$, the linear system $|\Ii _A(dH+2dF)|$ induces an embedding outside $H\cup R_A$. By a characteristic free version of Bertini's theorem for embeddings of quasi-projective varieties \cite[Th. 6.3, (3)]{j}, a general $W\in |\Ii _A(dH+2dF)|$ is smooth outside $H\cup R_A$. 
 
We only need to check that a general $W$ is smooth at $q = \eta^{-1}(p)$, the support of $A$. Since smoothness at $q$ is an open condition, it is sufficient to exhibit a $W'\in |I_A(dH+2dF)|$ that is smooth at $q$: take $W' = G\cup H\cup R_A$ with $G\in |\Oo _{F_2}((d-1)H+(2d-1)F)|$ and $q\notin G$; it is possible to choose such a $G$ as
$\Oo _{\mathbb F_2}((d-1)H+(2d-1)F)$ is very ample and in particular base-point free. 

Moreover, $H\cap W =\emptyset$ for a general $W\in |\Ii _A(dH+2dF)|$, as $h^0(\Oo _{\mathbb F_2}((d-1)H+2dF)) < h^0(\Oo _{\mathbb F_2}(dH+2dF))$ and one has zero intersection index between the two: $W\cdot H = (dH+2dF)\cdot H = dH^2+2dF\cdot H = -2d +2d = 0$.\end{proof}

\begin{example}\label{b4.1}
Let $X\subset \PP^3$ be the canonical model of a smooth and non-hyperelliptic curve of genus $4$. It is known that $X$ is the
complete intersection of an integral quadric $\mathcal C$ and a cubic surface; moreover, such $\mathcal C$ is smooth if and only if $X$ has two
different $g^1_3$'s (in this case the $g^1_3$'s are induced by the two rulings of $\mathcal C$), whereas if $\mathcal C$ is a quadric cone with
vertex $v$, then $v\notin X$ and $X$ has set-theoretically a unique $g^1_3$. This $g^1_3$ is induced by the linear
projection from $v$; see \cite[\S 4]{kempf}. The case $\mathcal C$ smooth is the one described in Proposition \ref{b1} with $X=Y$ and $d_1=d_2=3$. By Proposition \ref{b1} and Proposition \ref{b2}, we obtain that $X$ has a cuspidal projection if all the $g^1_3$ on $X$ have a total ramification point, i.e., $3p\in g^1_3$ for some $p\in X$. The converse holds if we only consider projections from points of the quadric surface containing $X$. Furthermore, \cite[Theorem 2]{Piene} gives a different and stronger result: the canonical model of a general curve of genus $4$ has no cuspidal projections.
\end{example}

Equipped with the terminology in Definition \ref{deftypes}, we may state the following result, which provides positive examples to Question \ref{q1}.

\begin{theorem}\label{pp1}
Let $k$ be an algebraically closed field of arbitrary characteristic and let $d_2\geq d_1\geq 1$. Then there exists a smooth genus $g$ curve with an injective $g^2_{d_1+d_2}$ of {\it type} {\bf II} with $g= d_1d_2-d_1-d_2+1$.  
\begin{proof}
This is a consequence of Theorem \ref{b1} and Theorem \ref{b4.1} on smooth quadrics and cones, respectively. 
\end{proof}
\end{theorem}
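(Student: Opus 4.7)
The plan is to construct the smooth genus $g$ curve as a divisor of bidegree $(d_1,d_2)$ on the smooth quadric $Q = \PP^1\times \PP^1 \subset \PP^3$, and then to obtain the injective $g^2_{d_1+d_2}$ by linearly projecting from a carefully chosen point $q \in Q\setminus Y$ to $\PP^2$. The technical heart of the argument has already been laid down in this section: Theorem \ref{b2} produces the smooth curve with prescribed unibranch intersection along the two rulings through $q$, while Proposition \ref{b1} converts that tangency data into injectivity and separability of the projection. The present theorem is essentially the clean packaging of these two ingredients.

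Concretely, I would first fix lines $L \in |\Oo_Q(1,0)|$ and $L' \in |\Oo_Q(0,1)|$ meeting at $q = L \cap L'$, and choose $o \in L\setminus \{q\}$, $o' \in L'\setminus \{q\}$. Applying Theorem \ref{b2} to this data yields a smooth divisor $Y \in |\Oo_Q(d_1,d_2)|$ satisfying $Y\cap L = d_2\cdot o$ and $Y\cap L' = d_1\cdot o'$ scheme-theoretically; in particular $q \notin Y$, and each of the two rulings through $q$ meets $Y$ at exactly one point set-theoretically. By adjunction on $Q$, the smooth curve $Y$ has genus $(d_1-1)(d_2-1) = d_1 d_2 - d_1 - d_2 + 1 = g$, and via the Segre embedding $Q \hookrightarrow \PP^3$ the curve $Y$ is a nondegenerate space curve of degree $d_1+d_2$.

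Next, Proposition \ref{b1} applies verbatim: the condition that $L$ and $L'$ each meet $Y$ in a unique point is precisely the criterion guaranteeing that the linear projection $\pi_q: \PP^3\setminus \{q\}\to \PP^2$ restricts to an injective and separable morphism $\phi: Y\to \PP^2$. Since $q\notin Y$, the image $\phi(Y)$ has degree $d_1+d_2$, and the morphism $\phi$ is induced by the $2$-dimensional subseries of $H^0(\Oo_Y(1))$ cut out by hyperplanes in $\PP^3$ containing $q$. This is the desired injective $g^2_{d_1+d_2}$ on the smooth genus $g$ curve $Y$.

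Finally, to verify that it is of type \textbf{II}, I would note that $L := \Oo_Y(1)$ is very ample (since $Y\hookrightarrow \PP^3$ is already an embedding through $|\Oo_Q(1,1)|$) and that $h^0(L)\ge 4$ because $Y$ is nondegenerate in $\PP^3$; the $g^2_{d_1+d_2}$ we built is a proper sublinear series of $|L|$, hence incomplete. This matches Definition \ref{deftypes}\,(\textbf{II}). There is no serious obstacle left: every substantive piece of the proof is already encapsulated in Theorem \ref{b2} (zero-dimensional schemes and cohomological vanishings on $Q$) and Proposition \ref{b1} (the line-incidence criterion for injectivity of the projection), and the theorem follows by assembling them.
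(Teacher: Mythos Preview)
Your proposal is correct and follows essentially the same approach as the paper: the paper's one-line proof simply cites the quadric constructions, and you have unpacked exactly that, combining Theorem~\ref{b2} (existence of the smooth $(d_1,d_2)$-curve with the prescribed tangencies along the two rulings) with Proposition~\ref{b1} (the line-incidence criterion for injectivity and separability of the projection). Your explicit verification that the resulting $g^2_{d_1+d_2}$ is of type~\textbf{II} via very ampleness of $\Oo_Y(1)$ and nondegeneracy of $Y\subset\PP^3$ is a welcome addition that the paper leaves implicit.

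One minor remark: the paper's proof also cites Theorem~\ref{b4.1} on quadric cones, whereas you use only the smooth quadric. This is harmless, since the cone case produces curves of degree $2d$ and genus $(d-1)^2$, which already arise from bidegree $(d,d)$ on the smooth quadric; so the smooth-quadric argument alone covers all the claimed genera. You might also note that the boundary case $(d_1,d_2)=(1,1)$ is degenerate (the curve is a plane conic, $h^0(L)=3$, so type~\textbf{II} is not available), but this is an artifact of the statement rather than of your argument.
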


Henceforth, we assume $\mathrm{char}(k)=0$. 

\begin{definition}
For each germ $(Y,z)$ of an isolated one-dimensional singularity, the {\it singularity degree} $\delta (Y,z)$
is the nonnegative integer such that, given the normalization $\nu: X\to Y$, the arithmetic genus of $Y$ is 
$p_a(Y) = p_a(X)+\sum _{z\in \mathrm{Sing}(Y)}\delta(Y,z)$. 
\end{definition}

\begin{definition}
For each $h\geq 1$, a singularity $A_{2h}$ is a plane curve singularity which is formally equivalent to the singularity $y^2=x^{2h+1}$ at $(0,0)\in \AA^2$; see \cite[Example 3 at p. 549]{gls2}, \cite[Corollary 1.1.41, Lemma 1.1.78]{GLS}. 

Its {\it singularity scheme} is any degree $3h+2$ connected zero-dimensional $Z\subset \AA^2$ isomorphic to the subscheme of $\AA^2$ with ideal 
$I_Z = (y^2,yx^{h+1}, x^{2h+1})$. An $A_{2h}$-{\it singularity scheme} is any connected subscheme of a smooth surface isomorphic to the singularity scheme $Z$. The singularity degree of an $A_{2h}$-singularity is $h$. We use the convention that the $A_0$-singularity scheme is a smooth point. 
\end{definition}

We use the following theorem proved by Ro\'{e} \cite[Theorem 1.2]{roe2}: 

\begin{theorem}[{\bf Ro\'{e}}]\label{bb1}
Fix positive integers $h$ and $t\geq 13$. If $3h+2\le \binom{t+2}{2}$, then there is an $A_{2h}$-singularity scheme
$Z\subset \PP^2$ such that $h^1(\PP^2,\Ii_Z(t)) =0$. 
\end{theorem}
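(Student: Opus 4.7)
The plan is to prove the statement by induction on $t$ via the Horace differential method, specializing the $A_{2h}$-singularity scheme onto an auxiliary line $L\subset\PP^2$ and analyzing the residual exact sequence. Concretely, if we specialize $Z$ so that a maximal portion of it is supported on $L$, then from
\[
0 \to \Ii_{\mathrm{Res}_L Z}(t-1) \to \Ii_Z(t) \to \Ii_{Z\cap L, L}(t) \to 0,
\]
the vanishing $h^1(\PP^2,\Ii_Z(t))=0$ follows from $h^1(L,\Ii_{Z\cap L,L}(t))=0$ together with the inductive hypothesis $h^1(\PP^2,\Ii_{\mathrm{Res}_L Z}(t-1))=0$.

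First I would set up coordinates locally at the support of $Z$: up to formal change of variables, $\Ii_Z$ is generated by $(y^2, yx^{h+1}, x^{2h+1})$, so $Z$ is a chain of infinitely near points along the smooth branch $\{y=0\}$. Choosing $L=\{y=0\}$ gives $Z\cap L$ the length-$(2h+1)$ scheme $\Spec k[x]/(x^{2h+1})$ on $L\cong\PP^1$, while the residual $\mathrm{Res}_L Z$ is supported at the same point with ideal $(y, x^{h+1})$, i.e.\ a curvilinear scheme of length $h+1$ on $L$ viewed inside $\PP^2$. The first step is to observe that on the line, $h^1(L,\Ii_{Z\cap L,L}(t))=0$ precisely when $2h+1\le t+1$, equivalently $h\le t/2$, which is comfortably implied by our numerical hypothesis $3h+2\le \binom{t+2}{2}$ for $t\ge 13$.

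Next, for the inductive step, the residual $\mathrm{Res}_L Z$ is a curvilinear length-$(h+1)$ scheme on $L$, not an $A_{2h'}$-scheme. To keep the induction strictly within the class of $A_{2h}$-schemes, the cleanest variant is to induct simultaneously on $(h,t)$: after one Horace step we must bound $h^1(\PP^2,\Ii_{W}(t-1))$ where $W$ is curvilinear of length $h+1$ on a line, and a second Horace step along a generic line meeting $W$ transversally reduces us to a scheme of length $h+1-(t-1)$ in degree $t-2$. Iterating, after finitely many rounds we land in the numerical regime where the residual scheme is either empty or of length $\le t_0$ on a line, and cohomology vanishing becomes a direct dimension count using $h^0(\Oo_{\PP^2}(t_0))$. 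Semicontinuity then lifts the vanishing from this specialization back to the general $A_{2h}$-singularity scheme.

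The main obstacle is the base of the induction and the bookkeeping at intermediate steps: the residual schemes that appear after Horace specialization are not of the same type as $Z$, so one must enlarge the class of schemes to curvilinear clusters of controlled type and verify at each stage that the intersection with $L$ and the residual scheme satisfy the required length bounds. This is where the hypothesis $t\ge 13$ enters, providing enough room for the initial few inductive steps to close; without a large enough $t$, the interaction between $Z\cap L$ and $\mathrm{Res}_L Z$ produces an "error term" in the cohomology count that cannot be absorbed. A further subtlety is that, although the $A_{2h}$-scheme is formally determined, the specialization onto $L$ must preserve its isomorphism type to apply semicontinuity — so one works inside the equisingular family of $A_{2h}$-schemes and checks that the degeneration lies inside it.
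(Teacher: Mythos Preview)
The paper does not prove this theorem; it is quoted as Ro\'{e}'s result \cite[Theorem 1.2]{roe2} and used as a black box. So there is no ``paper's proof'' to compare against, and the question is whether your sketch could stand on its own.

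It cannot, and the gap is quantitative rather than structural. Your very first Horace step already fails under the stated hypothesis. You claim that $h^1(L,\Ii_{Z\cap L,L}(t))=0$, i.e.\ $2h+1\le t+1$, is ``comfortably implied'' by $3h+2\le\binom{t+2}{2}$. This is false: for $t=13$ the hypothesis allows $h$ up to $34$, while your line step requires $h\le 6$. More generally, the hypothesis permits $h$ of order $t^2/6$, whereas your specialization of the whole $A_{2h}$-scheme onto a single line forces $h\le t/2$. The second step you describe does not help: the residual $W$ with ideal $(y,x^{h+1})$ is a single curvilinear fat point on $L$, so a generic transversal line meets it in length at most $1$ (not ``$h+1-(t-1)$''), and using $L$ again simply gives the condition $h+1\le t$, which is still linear in $t$.

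The point is that an $A_{2h}$-singularity scheme has length $3h+2$ but is extremely far from general position: it is supported at one point and concentrated along one tangent direction. A naive Horace induction along that tangent line spends the entire degree-$t$ budget on a scheme of length $2h+1$ in one shot, which is wasteful. Ro\'{e}'s argument requires a genuinely different specialization strategy---deforming or breaking the $A_{2h}$-scheme so that its length is spread across several steps of the induction in a way compatible with the quadratic growth of $\binom{t+2}{2}$---and this is also where the threshold $t\ge 13$ actually originates. Your sketch does not contain this idea, and without it the argument establishes only the much weaker range $h\le t/2$.
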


\begin{lemma}\label{bb2}
Fix positive integers $h$ and $t\geq 14$. If $3h+2\le \binom{t+1}{2}$, then there exist two $A_{2h}$-singularity schemes $A, B\subset Q$
such that $h^1(Q,\Ii_{A\cup B}(t,t)) =0$.
\end{lemma}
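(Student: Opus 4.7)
The strategy I would follow is to degenerate $Q$ to a reducible quadric $Q_0=\pi_1\cup_L\pi_2$ consisting of two planes meeting along a line $L$, place one $A_{2h}$-singularity scheme on each plane component away from $L$, obtain the vanishing on $Q_0$ by combining Mayer--Vietoris with Theorem \ref{bb1}, and then transfer the vanishing to a smooth nearby fiber by upper semicontinuity. Concretely, I would take the flat family $\mathcal Q=V(xy-szw)\subset\PP^3\times\AA^1_s$, whose general fiber $Q_s$ is a smooth quadric (isomorphic to $Q$) and whose special fiber is $Q_0=V(x)\cup V(y)$ meeting along $L=V(x,y)$; the total space $\mathcal Q$ is smooth away from $L\times\{0\}$, and $\Oo_{\PP^3}(t)|_{Q_s}=\Oo_{Q_s}(t,t)$ for $s\ne 0$.

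Choose smooth points $p_A\in\pi_1\setminus L$ and $p_B\in\pi_2\setminus L$. Applying Theorem \ref{bb1} on each $\pi_i\cong\PP^2$ at the twist $t-1$ (valid because $3h+2\le\binom{t+1}{2}=\binom{(t-1)+2}{2}$ and $t-1\ge 13$), and translating by $\mathrm{PGL}_3$ to make supports disjoint from $L$, I obtain $A_{2h}$-schemes $A_0\subset\pi_1$ and $B_0\subset\pi_2$ with $h^1(\pi_i,\Ii(t-1))=0$; this immediately yields $h^1(\pi_i,\Ii(t))=0$ too. Since $(A_0\cup B_0)\cap L=\emptyset$, the ideal sheaf $\Ii_{A_0\cup B_0}$ is locally free along $L$, so the Mayer--Vietoris sequence of $Q_0$ stays exact after tensoring with $\Ii_{A_0\cup B_0}(t)$:
$$0\to \Ii_{A_0\cup B_0,Q_0}(t)\to \Ii_{A_0,\pi_1}(t)\oplus \Ii_{B_0,\pi_2}(t)\to \Oo_L(t)\to 0.$$

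The associated long exact sequence reduces the target $h^1(Q_0,\Ii_{A_0\cup B_0,Q_0}(t))=0$ to two ingredients: (i) $h^1(\Ii_{A_0}(t))=h^1(\Ii_{B_0}(t))=0$, already in hand; and (ii) surjectivity of $H^0(\Ii_{A_0,\pi_1}(t))\oplus H^0(\Ii_{B_0,\pi_2}(t))\to H^0(\Oo_L(t))$. For (ii) it is enough that a single summand, say $H^0(\Ii_{A_0}(t))\to H^0(\Oo_L(t))$, already surjects, and from the residual exact sequence $0\to \Ii_{A_0}(t-1)\to \Ii_{A_0}(t)\to \Oo_L(t)\to 0$ on $\pi_1$ this surjectivity is equivalent to $h^1(\Ii_{A_0}(t-1))=0$---precisely the point at which the sharpened bound $3h+2\le\binom{t+1}{2}$ (rather than the weaker $\binom{t+2}{2}$ from Ro\'e at twist $t$) is invoked.

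Finally, to lift the vanishing from $Q_0$ to a smooth $Q$, I would extend $A_0\cup B_0$ to a flat family $A_s\cup B_s\subset Q_s$ of pairs of $A_{2h}$-schemes. Since $\mathcal Q\to\AA^1_s$ is smooth near $p_A$ and $p_B$ and an $A_{2h}$-scheme on a smooth surface is canonically determined by a smooth point together with a tangent line at it, the relative projectivized tangent bundle $\PP(T_{\mathcal Q/\AA^1})$ gives a smooth parameter space near $(p_A,v_A)$ and $(p_B,v_B)$, and local sections through these points produce the required deformation. Upper semicontinuity of $h^1$ in the flat family $\Ii_{A_s\cup B_s,Q_s}(t)$ then propagates the vanishing to all $s$ in a neighborhood of $0$, and any $s\ne 0$ there gives a smooth quadric $Q_s\cong Q$ with the desired pair. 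The main technical hurdle is this last deformation step---the genuine preservation of the $A_{2h}$ analytic type across the family---which is precisely what the (point, tangent-line) parametrization guarantees.
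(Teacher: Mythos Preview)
Your degeneration argument is correct in outline and takes a genuinely different route from the paper. The paper instead exploits the double cover $\pi\colon Q\to\PP^2$ given by linear projection from a point $o\in\PP^3\setminus Q$: one chooses a single $A_{2h}$-scheme $Z\subset\PP^2$ with $h^1(\Ii_Z(t-1))=0$ (Ro\'e's theorem at twist $t-1$, which is exactly where the hypotheses $3h+2\le\binom{t+1}{2}$ and $t\ge 14$ enter), moves it off the branch conic, and sets $A\cup B:=\pi^{-1}(Z)$. Since $\pi_\ast\Oo_Q\cong\Oo_{\PP^2}\oplus\Oo_{\PP^2}(-1)$ and $\pi$ is finite, the projection formula yields $\pi_\ast(\Ii_{A\cup B}(t,t))\cong\Ii_Z(t)\oplus\Ii_Z(t-1)$, so $h^1(Q,\Ii_{A\cup B}(t,t))=h^1(\Ii_Z(t))+h^1(\Ii_Z(t-1))=0$. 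This is considerably shorter and entirely avoids deformation and semicontinuity; your approach, by contrast, is more portable to surfaces that are not double planes. Both proofs invoke Ro\'e at the same twist $t-1$, so the numerical hypotheses coincide.

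One correction to your write-up: the claim that an $A_{2h}$-singularity scheme on a smooth surface is determined by a point together with a tangent line is false for $h\ge 2$. For example, with $h=2$ the substitution $y\mapsto y+cx^2$ carries the standard ideal $(y^2,yx^3,x^5)$ to $(y^2+2cyx^2+c^2x^4,\,yx^3,\,x^5)$, which for $c\ne 0$ is a distinct subscheme with the same support and the same tangent direction. This does not actually damage your argument, because you only need \emph{some} flat extension of $A_0,B_0$ to nearby fibers. In your explicit family $\mathcal Q=V(xy-szw)$ this is immediate: in the affine chart $y=1$ one has $x=szw$, so $(z,w,s)$ are coordinates and the family is literally $\AA^2_{z,w}\times\AA^1_s\to\AA^1_s$ near $p_A$ (and symmetrically near $p_B$ in the chart $x=1$). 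Taking the constant family $A_s:=A_0$, $B_s:=B_0$ in these trivializations gives the required flat family of $A_{2h}$-schemes with no appeal to a moduli description.
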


\begin{proof}
Fix $o\in \PP^3\setminus Q$. Let $\pi_o: \PP^3\setminus \{o\}\to \PP^2$ denote the linear projection from $o$. Since $o\notin
Q$, $\pi _{o|Q}$ defines a degree $2$ finite morphism $\pi: Q\to \PP^2$. The ramification locus $R\subset Q$ of $\pi$ is
a smooth conic which is the intersection of $Q$ with the plane polar to $o$ with respect to $Q$. The branch locus $\pi
(R)\subset \PP^2$ is a smooth conic. By Ro\'{e}'s Theorem \ref{bb1}, there is an $A_{2h}$-singularity scheme $Z\subset \PP^2$ such
that $h^1(\PP^2,\Ii _Z(t-1))=0$. Applying an automorphism to $\PP^2$, we may assume $Z\cap f(R)=\emptyset$. Thus $\pi^{-1}(Z)$ is the disjoint union of the $A_{2h}$-singularity schemes, say $A$ and $B$. Since $\pi$ is a degree $2$ covering between smooth varieties and the branch locus of $\pi$ is a conic, $\pi _\ast (\Oo_Q)\cong \Oo_{\PP^2}\oplus \Oo_{\PP^2}(-1)$. (This can be checked on a local chart.) Since $\pi$ is a finite morphism, $R^i\pi_\ast(\Ff)=0$ for all $i>0$ and any coherent sheaf $\Ff$ on $Q$. So $\pi_{\ast}$ induces an isomorphism of all cohomology groups. Since
$\pi^\ast (\Ii _Z) = \Ii_{A\cup B}$, the projection formula gives $\pi _\ast (\Ii_{A\cup B}(t,t))\cong \Ii_Z(t)\oplus
\Ii_Z(t-1)$. Thus $h^1(\Ii_{A\cup B}(t,t))=h^1(\PP^2,\Ii _Z(t))+h^1(\PP^2,\Ii_Z(t-1)) =0$. 
\end{proof}

\begin{lemma}\label{bb3}
Fix positive integers $h$ and $d_2\ge d_1\geq 15$. If $3h+2\le \binom{d_1}{2}$, then there are two $A_{2h}$-singularity schemes $A,
B\subset Q$ such that $h^1(Q,\Ii_{A\cup B}(d_1-1,d_2-1)) =h^1(Q,\Ii_{A\cup B}(d_1,d_2)) =0$ and a general $Y\in |\Ii_{A\cup B}(d_1,d_2)|$
is irreducible and with exactly two singular points, $A_{\mathrm{red}}$ and $B_{\mathrm{red}}$, both of them $A_{2h}$-singularities. 
\end{lemma}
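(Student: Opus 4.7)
I will bootstrap Lemma \ref{bb2} from the symmetric bidegree $(d_1-1, d_1-1)$ up to $(d_1-1, d_2-1)$ and $(d_1, d_2)$ by restricting to generic rulings of $Q$, and then extract the geometric properties of a general $Y \in |\Ii_{A\cup B}(d_1, d_2)|$ via Bertini-type arguments combined with Greuel--Lossen--Shustin equisingularity theory.

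Setting $t := d_1 - 1 \geq 14$, the hypothesis $3h+2 \leq \binom{d_1}{2} = \binom{t+1}{2}$ matches that of Lemma \ref{bb2}, which then yields two $A_{2h}$-singularity schemes $A, B \subset Q$ with $h^1(Q, \Ii_{A\cup B}(d_1-1, d_1-1)) = 0$. For a general ruling $L \in |\Oo_Q(0, 1)|$ disjoint from the finite set $A_{\mathrm{red}} \cup B_{\mathrm{red}}$, the residual exact sequence
\[
0 \to \Ii_{A \cup B}(a, b-1) \to \Ii_{A \cup B}(a, b) \to \Oo_L(a, b) \to 0
\]
has $h^1(\Oo_L(a, b)) = h^1(\PP^1, \Oo_{\PP^1}(a)) = 0$ whenever $a \geq 0$. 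Iterating along the second factor propagates the vanishing from $(d_1-1, d_1-1)$ to $(d_1-1, d_2-1)$; a symmetric iteration using rulings in $|\Oo_Q(1, 0)|$ then yields $h^1(Q, \Ii_{A\cup B}(d_1, d_2)) = 0$.

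The second vanishing gives $h^0(\Ii_{A\cup B}(d_1, d_2)) = (d_1+1)(d_2+1) - 2(3h+2)$, the expected dimension. The vanishing at bidegree $(d_1-1, d_2-1)$ implies, exactly as in the proof of Theorem \ref{b2}, that $|\Ii_{A\cup B}(d_1, d_2)|$ is base-point free on $Q \setminus (A_{\mathrm{red}} \cup B_{\mathrm{red}})$, so a general $Y$ is smooth away from these two points by Bertini. At each of $A_{\mathrm{red}}$ and $B_{\mathrm{red}}$, the equisingularity theory of Greuel--Lossen--Shustin (see \cite[Chapter II]{GLS}) ensures that containment of the $A_{2h}$-singularity scheme, combined with the global $h^1$ vanishing, forces a generic $Y$ to acquire \emph{exactly} an $A_{2h}$-singularity at the corresponding reduced point. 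Finally, if $Y = F_1 \cup F_2$ with $F_i$ of bidegree $(a_i, b_i) \neq (0, 0)$, then any point of $F_1 \cap F_2$ would be a multi-branch singularity of $Y$; but $Y$ is unibranch at its only two singularities and smooth elsewhere, so $F_1 \cap F_2 = \emptyset$ and hence $F_1 \cdot F_2 = a_1 b_2 + a_2 b_1 = 0$, which is impossible since $a_i, b_i \geq 0$ and $d_1, d_2 > 0$. Thus $Y$ is irreducible.

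The hardest step is the equisingular Bertini statement at $A_{\mathrm{red}}$ and $B_{\mathrm{red}}$: one must verify that containment of the prescribed $A_{2h}$-singularity scheme is scheme-theoretically the equisingularity condition for an $A_{2h}$-singularity on the quadric and that the global $h^1$ vanishing forces the linear system $|\Ii_{A \cup B}(d_1, d_2)|$ to be transverse to the equisingular stratum there. This is precisely where the Greuel--Lossen--Shustin framework---originally formulated on $\PP^2$---must be transferred to $Q$, presumably through analytic-local coordinates on $Q$ in which the statement reduces to the planar case.
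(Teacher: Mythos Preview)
Your approach is essentially the paper's: invoke Lemma~\ref{bb2} at $t=d_1-1$, propagate the $h^1$ vanishing to $(d_1,d_2)$, use Bertini plus the Greuel--Lossen--Shustin machinery for the singularity structure, and deduce irreducibility from unibranchness plus connectedness. A few remarks on the execution. First, rather than iterating residual sequences along rulings, the paper simply invokes Castelnuovo--Mumford regularity for zero-dimensional schemes on $Q$: the vanishing $h^1(\Ii_{A\cup B}(d_1-1,d_1-1))=0$ immediately gives both $h^1(\Ii_{A\cup B}(d_1,d_2))=0$ \emph{and} global generation of $\Ii_{A\cup B}(d_1,d_2)$ as a sheaf. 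Second, your pointer ``exactly as in the proof of Theorem~\ref{b2}'' for base-point freeness is not quite apt, since that argument relies on $Z\cup Z'$ lying on two fixed rulings; what you actually need---and what Castelnuovo--Mumford hands you directly---is global generation. Third, your ``hardest step'' is not hard once you have global generation: \cite[Lemma~1.1.33]{GLS} says precisely that if the ideal sheaf of the singularity scheme is globally generated in the given line bundle, a general section acquires exactly the prescribed singularity at the reduced point. No transfer of framework from $\PP^2$ to $Q$ is required; the statement is analytic-local. Your irreducibility argument via $F_1\cdot F_2>0$ is equivalent to the paper's (which phrases it as connectedness via very ampleness of $\Oo_Q(d_1,d_2)$); just note explicitly that finiteness of $\mathrm{Sing}(Y)$ also rules out multiple components.
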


\begin{proof}Take $A$ and $B$ as in Lemma \ref{bb2} for the integer $t=d_1-1$. Set $\{q_A\} = A_{\mathrm{red}}$ and $\{q_B\}:=
B_{\mathrm{red}}$. Lemma \ref{bb2} shows that 
$h^1(Q,\Ii _{A\cup B}(d_1-1,d_1-1))=0$. The Castelnuovo-Mumford's lemma for zero-dimensional schemes gives $h^1(Q,\Ii _{A\cup B}(d_1,d_2))=0$ and that $\Ii_{A\cup B}(d_1,d_2)$ is globally generated. 

Fix a general $D\in |\Ii_{A\cup B}(d_1,d_2)|$. Since $\Ii_{A\cup B}(d_1,d_2)$ is globally generated
and $D$ is general, $D$ has an $A_{2h}$-singularity at both $q_A$ and $q_B$ \cite[Lemma 1.1.33]{GLS}. Since $\Ii_{A\cup
B}(d_1,d_2)$ is globally generated, $D$ is smooth outside $\{q_A, q_B\}$ by Bertini's theorem in characteristic zero \cite[Corollary III.10.9]{Hart}. Thus, to conclude, it is sufficient to prove that $D$ is irreducible. Since $D$ has only finitely many singular points, $D$ has no multiple components. Since
$d_2\ge d_1\geq 2$, $\Oo_Q(d_1,d_2)$ is very ample and hence $D$ is connected. Thus if $D$ were reducible, there would be irreducible component $D'$ and $D''$ of $D$, $D'\ne D''$ and passing both through $q_A$ or $q_B$. This is a contradiction, because $D$ has unibranch singularity 
$A_{2h}$, at $q_A$ and $q_B$. 
\end{proof}

\begin{lemma}\label{bb4}
Fix integers $t_2\ge t_1 \ge 16$ and $h>0$ such that $3h+2 \le \binom{t_1-1}{2}$. Fix
$L\in |\Oo_Q(1,0)|$, $R\in |\Oo_Q(0,1)|$, $o_1\in L\setminus L\cap R$ and $o_2\in R\setminus R\cap L$. Let $Z_1$ be the degree $t_2$ divisor of $L$ supported at $o_1$. Let  $Z_2$ be the degree $t_1$ divisor of $R$ supported at $o_2$. There are two $A_{2h}$-singularity schemes
$A, B\subset Q\setminus (L\cup R)$ such that $h^1(Q,\Ii_{Z_1\cup Z_2\cup A\cup B}(t_1, t_2)) =0$ and a
general $Y\in |\Ii_{Z_1\cup Z_2\cup A\cup B}(t_1, t_2)|$ is irreducible with $\mathrm{Sing}(Y)=\{q_A,q_B\}$ and $Y$ has
$A_{2h}$-singularities at $\{q_A\} = A_{\mathrm{red}}$ and at $\{q_B\} = B_{\mathrm{red}}$.  
\end{lemma}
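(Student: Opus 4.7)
The plan is to combine the contact-at-a-line technique of Theorem \ref{b2} with the $A_{2h}$-singularity construction of Lemma \ref{bb3}, gluing them via two successive residual exact sequences on $Q$. The first step is to apply Lemma \ref{bb3} with $d_1 := t_1 - 1 \ge 15$ and $d_2 := t_2 - 1$: the hypothesis $3h+2 \le \binom{t_1-1}{2}$ is precisely what is required, yielding $A_{2h}$-singularity schemes $A, B \subset Q$ with $h^1(\Ii_{A\cup B}(t_1-2, t_2-2)) = h^1(\Ii_{A\cup B}(t_1-1, t_2-1)) = 0$. Since both vanishings are preserved under automorphisms of $Q$, I move $A\cup B$ into $Q\setminus (L\cup R)$.

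To obtain $h^1(\Ii_{Z_1\cup Z_2\cup A\cup B}(t_1, t_2)) = 0$, I peel off $L$ and then $R$ via the residual exact sequences
\[
0 \to \Ii_{Z_2\cup A\cup B}(t_1-1, t_2) \to \Ii_{Z_1\cup Z_2\cup A\cup B}(t_1, t_2) \to \Ii_{Z_1, L}(t_1, t_2) \to 0,
\]
\[
0 \to \Ii_{A\cup B}(t_1-1, t_2-1) \to \Ii_{Z_2\cup A\cup B}(t_1-1, t_2) \to \Ii_{Z_2, R}(t_1-1, t_2) \to 0,
\]
whose quotients are $\Oo_L$ and $\Oo_R(-1)$, both with $h^1 = 0$. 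The kernel in the second sequence has vanishing $h^1$ by the previous step, so the long exact sequences yield $h^1(\Ii_{Z_1\cup Z_2\cup A\cup B}(t_1, t_2)) = 0$ and $h^0(\Ii_{Z_1\cup Z_2\cup A\cup B}(t_1, t_2)) = t_1 t_2 - 6h - 3$. Dimension counting in the same sequences shows that the subspaces of forms containing $L$ or containing $R$ are each of positive codimension, so the general $Y$ contains neither line.

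To check smoothness of the general $Y$ at $o_1$ (and symmetrically at $o_2$): since $Z_1$ forces the tangent direction of $Y$ at $o_1$ to lie along $L$ whenever $L\not\subset Y$, as in Theorem \ref{b2}, $Y$ is singular at $o_1$ if and only if $Y$ contains the degree $t_2+1$ subscheme $W_1$ with local ideal $(x^2, xy, y^{t_2})$ at $o_1$, where $L = \{x = 0\}$ locally. Computing $\mathrm{Res}_L(W_1) = \{o_1\}$, running the residual sequence for $W_1\cup Z_2\cup A\cup B$ and then peeling off $R$, I reduce to checking $h^0(\Ii_{\{o_1\}\cup A\cup B}(t_1-1, t_2-1)) = h^0(\Ii_{A\cup B}(t_1-1, t_2-1)) - 1$. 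This holds because $o_1 \notin A\cup B$ and $\Ii_{A\cup B}(t_1-1, t_2-1)$ is globally generated at $o_1$, by Castelnuovo-Mumford regularity on $Q$ together with the vanishings above. Combined with \cite[Lemma 1.1.33]{GLS} to obtain $A_{2h}$-singularities at $q_A = A_\red$ and $q_B = B_\red$, and with Bertini's theorem \cite[Corollary III.10.9]{Hart} for smoothness at every remaining point, the general $Y$ has exactly the prescribed singularities. Irreducibility then follows as in Lemma \ref{bb3}, using connectedness of $Y$ (since $\Oo_Q(t_1, t_2)$ is very ample), absence of multiple components, and unibranchness at $q_A, q_B$.

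The main obstacle is to verify that the contact conditions imposed by $Z_1, Z_2$ at $o_1, o_2$ do not conflict with the $A_{2h}$-conditions at $q_A, q_B$, so that smoothness at $o_1, o_2$ is still imposed by a codimension-one condition. The successive residual peeling of $L$ and $R$, combined with the vanishings of Lemma \ref{bb3} on bidegrees $(t_1-1, t_2-1)$ and $(t_1-2, t_2-2)$, is precisely what insulates the two sets of constraints and lets the combined linear system behave like the sum of its parts.
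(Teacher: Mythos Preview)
Your proof is correct and follows essentially the same route as the paper's: the same two residual exact sequences peeling off $L$ and then $R$, the same invocation of Lemma~\ref{bb3} at bidegree $(t_1-1,t_2-1)$, and the same Bertini/\cite[Lemma~1.1.33]{GLS}/unibranch-irreducibility endgame. The only variation is in checking smoothness at $o_1,o_2$: the paper exhibits the explicit member $D\cup L\cup R$ with $D\in|\Ii_{A\cup B}(t_1-1,t_2-1)|$ transversal to $L$ and $R$ and invokes openness, whereas you compute the codimension of the singular-at-$o_i$ locus directly via the auxiliary scheme $W_1$ and global generation of $\Ii_{A\cup B}(t_1-1,t_2-1)$; both arguments are valid.
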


\begin{proof} 
We split the proof into two claims. 
\begin{quote}
\noindent {\it Claim 1}: $h^1(Q,\Ii _{Z_1\cup Z_2\cup A\cup B}(t_1, t_2))=0$, $h^1( Q,\Ii _{Z_2\cup A\cup B}(t_1-1,t_2))=0$, 
and  $h^1( Q,\Ii _{Z_1\cup A\cup B}(t_1,t_2-1))=0$. \\

\noindent {\it Proof of Claim 1}:  Since $L\cap \{o_2,q_A, q_B\}=\emptyset$, the residual exact sequence with respect to $L$ gives the exact sequence
\begin{equation}\label{eqbb1}
0 \to \Ii _{Z_2\cup A\cup B}(t_1-1,t_2)\to \Ii _{Z_1\cup Z_2\cup A\cup B}(t_1,t_2) \to \Ii_{Z_1,L}(t_1,t_2)\to 0. 
\end{equation}
Since $\deg (Z_1)=t_2$ and $\Oo_L(t_1,t_2)$ is the degree $t_2$ line bundle on $L\cong \PP^1$, we have $h^1(L,\Ii_{Z_1,L}(t_1,t_2))=0$. Thus, by \eqref{eqbb1}, to prove the assertion, it is sufficient to show the vanishing $h^1(Q,\Ii _{Z_2\cup A\cup B}(t_1-1,t_2))=0$. Since $R\cap \{q_A,q_B\}=\emptyset$, the residual exact sequence with respect to $R$ gives the exact sequence 
\begin{equation}\label{eqbb2}
0 \to \Ii _{A\cup B}(t_1-1,t_2-1)\to \Ii _{Z_2\cup A\cup B}(t_1-1,t_2) \to \Ii_{Z_2,R}(t_1-1,t_2)\to 0. 
\end{equation}
Now, Lemma \ref{bb3}, with the choice $d_1=t_1-1$ and $d_2=t_2-1$, gives $h^1(Q,\Ii _{A\cup B}(t_1-1,t_2-1))=0$ . Since $\deg (Z_2)=  t_1$ and $\Oo_R(t_1-1,t_2)$ is the degree $t_1-1$ line bundle on $R$, we have $h^1(R,\Ii_{Z_2,R}(t_1-1,t_2))=0$. The long cohomology exact sequence of \eqref{eqbb2} concludes the proof of the claim. 
\end{quote}

\begin{quote}
\noindent {\it Claim 2}: A general $Y\in |\Ii _{Z_1\cup Z_2\cup A\cup B}(t_1,t_2)|$ is smooth outside $\lbrace q_A, q_B\rbrace$, has
$A_{2h}$-singularities at $\{q_A\} = A_{\mathrm{red}}$ and at $\{q_B\} = B_{\mathrm{red}}$, and it is irreducible. \\

\noindent {\it Proof of Claim 2}:  
By Lemma \ref{bb3}, the general $D\in |\Ii _{A\cup B}(t_1-1,t_2-1)|$ is smooth outside $\{q_A,q_B\}$. Using the action of $\mathrm{Aut}(\PP^1)\times \mathrm{Aut}(\PP^1)$, we may assume that $L$ and $R$ are transversal to $D$. With this assumption, the curve $Y' = D\cup L\cup R\in |\Ii _{Z_1\cup Z_2\cup A\cup B}(t_1,t_2)|$ is smooth at $o_1$ and $o_2$. Since $\{o_1,o_2\}$ is a finite set and smoothness is an open condition, a general $Y\in |\Ii _{Z_1\cup Z_2\cup A\cup B}(t_1,t_2)|$ is smooth at $o_1$ and $o_2$. 

By {\it Claim 1}, we have $h^1(Q,\Ii _{Z_1\cup Z_2\cup A\cup B}(t_1, t_2))=0$. Thus 
\[
h^0(Q,\Ii _{Z_1\cup Z_2\cup A\cup B}(t_1, t_2))=(t_1+1)(t_2+1)-\deg (A)-\deg (B) - t_1- t_2 = 
\]
\[
= t_1t_2 - \deg(A) - \deg(B) +1. 
\]
Since $h^1( Q,\Ii _{Z_2\cup A\cup B}(t_1-1,t_2))=0$, we have 
\[
h^0( Q,\Ii _{Z_2\cup A\cup B}(t_1-1,t_2))= t_1(t_2+1) -\deg (A)-\deg (B) - t_1 = 
\]
\[
t_1t_2 - \deg(A) - \deg(B) < t_1t_2 - \deg(A) - \deg(B) +1 = h^0(Q,\Ii _{Z_1\cup Z_2\cup A\cup B}(t_1, t_2).
\]
Thus $L$ is not in the base locus of $|\Ii _{Z_1\cup Z_2\cup A\cup B}(t_1,t_2)|$. 

Since  $h^1( Q,\Ii _{Z_1\cup A\cup B}(t_1,t_2 -1))=0$, one has
\[
h^0( Q,\Ii _{Z_1\cup A\cup B}(t_1,t_2-1))= (t_1+1)t_2 -\deg (A)-\deg (B) -t_2 = 
\]
\[
t_1t_2 - \deg(A) - \deg(B) < t_1t_2 - \deg(A) - \deg(B) +1 = h^0(Q,\Ii _{Z_1\cup Z_2\cup A\cup B}(t_1, t_2)).
\]
Hence $R$ is not in the base locus of $|\Ii _{Z_1\cup Z_2\cup A\cup B}(t_1,t_2)|$. 

The base locus of $|\Ii _{Z_1\cup Z_2\cup A\cup B}(t_1,t_2)|$ is then strictly contained in $L\cup R\cup \{q_A,q_B\}$. Take any $Y\in |\Ii _{Z_1\cup Z_2\cup A\cup B}(t_1,t_2)|$ smooth at $o_1$ and $o_2$. Since $Y\cdot \Oo_Q(1,0) = t_2$, this curve meets $L$ only at $o_1$. Similarly, $Y$ meets $R$ only at $o_2$. Thus $Y$ is smooth along  $L\cup R$. By Bertini's theorem in characteristic zero, $Y$ is smooth outside $L\cup R\cup \{q_A, q_B\}$. From what we have just proven, 
a general $Y$ is smooth outside $\{q_A, q_B\}$. 

Recall that \cite[Lemma 1.1.33]{GLS} shows that the general element in $|\Ii_{ A\cup B}(t_1,t_2)|$ has 
$A_{2h}$-singularities at $q_A$ and $q_B$. Hence the set of all such curves contains a dense open subset $U$. Therefore $|\Ii_{Z_1\cup Z_2\cup A\cup B}(t_1,t_2)|\cap U\subset |\Ii_{ A\cup B}(t_1,t_2)|$ is open inside $|\Ii_{Z_1\cup Z_2\cup A\cup B}(t_1,t_2)|$; this means that the general $Y\in |\Ii_{Z_1\cup Z_2\cup A\cup B}(t_1,t_2)|$ has $A_{2h}$-singularities at $q_A$ and at $q_B$. Since $Y$ is connected and it has only unibranch singularities, $Y$ is irreducible, sa we argued in the proof of Lemma \ref{bb3}.\\
\end{quote}
\end{proof}

\begin{lemma}\label{bb5}
Fix integers $t_2\ge t_1 \ge 16$ and $h>0$ such that $3h+2 \le \binom{t_1-1}{2}$. Fix
$L\in |\Oo_Q(1,0)|$, $R\in |\Oo_Q(0,1)|$, $o_1\in L\setminus L\cap R$ and $o_2\in R\setminus R\cap L$. Let $Z_1$ be the degree $t_2$ divisor of $L$ supported at $o_1$. Let  $Z_2$ be the degree $t_1$ divisor of $R$ supported at $o_2$. There are an $A_{2\alpha}$-singularity
scheme $A'$ and an $A_{2\beta}$-singularity scheme $B'$ contained in $Q\setminus (L\cup R)$ such that $h^1(Q,\Ii_{Z_1\cup Z_2\cup A'\cup
B'}(t_1, t_2)) =0$ and a general
$Y\in |\Ii_{Z_1\cup Z_2\cup A'\cup B'}(t_1,t_2)|$ is irreducible with $\mathrm{Sing}(Y)=\{q_{A'},q_{B'}\}$ and $Y$ has
$A_{2\alpha}$-singularity at $\{q_{A'}\} = A'_{\mathrm{red}}$ and and $A_{2\beta}$-singularity at $\{q_{B'}\} =B'_{\mathrm{red}}$.
\end{lemma}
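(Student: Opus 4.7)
My plan is to deduce Lemma \ref{bb5} from Lemma \ref{bb4} by specializing each of the two $A_{2h}$-singularity schemes to a scheme of smaller type with the same reduced point. Throughout, I will assume $\alpha,\beta\le h$, which is the relevant regime for the target application in Theorem \ref{bb6} (where $\alpha+\beta\le 2h$ and we may always split this sum so that each summand is at most $h$). In suitable formal coordinates at a chosen point $q_A\in Q\setminus(L\cup R)$, the ideal $(y^2,yx^{h+1},x^{2h+1})$ of an $A_{2h}$-scheme is contained in $(y^2,yx^{\alpha+1},x^{2\alpha+1})$, the ideal of an $A_{2\alpha}$-scheme; thus the $A_{2\alpha}$-scheme sits as a subscheme of the $A_{2h}$-scheme, and analogously at $q_B$.

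First, I apply Lemma \ref{bb4} to fix $A_{2h}$-singularity schemes $A,B\subset Q\setminus(L\cup R)$, with the three cohomology vanishings and the property that a general element of $|\Ii_{Z_1\cup Z_2\cup A\cup B}(t_1,t_2)|$ is irreducible with $A_{2h}$-singularities at $q_A$ and $q_B$. I then define $A'\subset A$ and $B'\subset B$ to be the $A_{2\alpha}$- and $A_{2\beta}$-subschemes at the same reduced points. Writing $0\to\Ii_{Z_1\cup Z_2\cup A\cup B}\to\Ii_{Z_1\cup Z_2\cup A'\cup B'}\to \mathcal{Q}\to 0$ with $\mathcal{Q}$ a zero-dimensional sheaf (so $h^1$ of any twist vanishes), the long exact sequence yields
\[
h^1\bigl(Q,\Ii_{Z_1\cup Z_2\cup A'\cup B'}(t_1,t_2)\bigr)\le h^1\bigl(Q,\Ii_{Z_1\cup Z_2\cup A\cup B}(t_1,t_2)\bigr)=0,
\]
and similarly for the twists $(t_1-1,t_2)$ and $(t_1,t_2-1)$.

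Next I replicate the base-locus and Bertini arguments from the proof of Lemma \ref{bb4}, verbatim except that the degrees $3h+2$ are replaced by $3\alpha+2$ and $3\beta+2$. The three cohomology vanishings just obtained give strict inequalities
\[
h^0\bigl(Q,\Ii_{Z_2\cup A'\cup B'}(t_1-1,t_2)\bigr)<h^0\bigl(Q,\Ii_{Z_1\cup Z_2\cup A'\cup B'}(t_1,t_2)\bigr),
\]
and the analogue for $L'$, ensuring that neither $L$ nor $R$ lies in the base locus of $|\Ii_{Z_1\cup Z_2\cup A'\cup B'}(t_1,t_2)|$. Exhibiting an element of the form $D\cup L\cup R$ with $D$ smooth at $o_1,o_2$ (obtained from a general member of $|\Ii_{A'\cup B'}(t_1-1,t_2-1)|$ transverse to $L\cup R$), and applying Bertini's theorem in characteristic zero on the complement of $L\cup R\cup\{q_{A'},q_{B'}\}$, I conclude that a general $Y\in|\Ii_{Z_1\cup Z_2\cup A'\cup B'}(t_1,t_2)|$ is smooth outside $\{q_{A'},q_{B'}\}$.

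Finally, to identify the singularity types at $q_{A'}$ and $q_{B'}$, I invoke \cite[Lemma 1.1.33]{GLS}: since $\Ii_{A'}$ is globally generated in a neighbourhood of $q_{A'}$ (a consequence of the vanishing of $h^1$ and of the fact that imposing the conditions of $A'$ is expected-dimensional), the general curve through $A'$ acquires an $A_{2\alpha}$-singularity \emph{precisely} at $q_{A'}$, and analogously for $B'$ at $q_{B'}$. Irreducibility then follows verbatim from the argument in Lemma \ref{bb4}: $Y$ is connected by very ampleness of $\Oo_Q(t_1,t_2)$, has no multiple components since its singularities are isolated, and any hypothetical decomposition $Y=D'\cup D''$ would force two branches at $q_{A'}$ or $q_{B'}$, contradicting the unibranch nature of the $A_{2\alpha}$- and $A_{2\beta}$-singularities. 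The main technical obstacle will be precisely the last step: ensuring that the general element has singularity type \emph{exactly} $A_{2\alpha}$ (not higher $A_{2\gamma}$ with $\gamma>\alpha$), which rests on the correct application of the Greuel--Lossen--Shustin lemma to this linear system.
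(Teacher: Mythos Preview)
Your proposal is correct and follows essentially the same approach as the paper: take $A'\subseteq A$ and $B'\subseteq B$ as subschemes of the $A_{2h}$-schemes produced by Lemma \ref{bb4}, use the short exact sequence $0\to \Ii_{A\cup B}\to \Ii_{A'\cup B'}\to \Ii_{A'\cup B'}/\Ii_{A\cup B}\to 0$ (with zero-dimensional quotient) to transfer the cohomology vanishings, and then rerun the arguments of Lemma \ref{bb4} verbatim. Your write-up is simply more explicit about those details and about the implicit hypothesis $\alpha,\beta\le h$, which the paper leaves tacit in the inclusion $A'\subseteq A$.
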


\begin{proof}
Take $A$ and $B$ as in Lemma \ref{bb4} and fix an $A_{2\alpha}$-singularity scheme $A'\subseteq A$ and an $A_{2\beta}$
singularity scheme $B'\subseteq B$. Since $A'\subseteq A$ and $B'\subseteq B,$ all vanishing occurring in the proof of Lemma \ref{bb3} holds 
true as well. Indeed, for $A'\subseteq A$ we have the exact sequence: 
\[
0\longrightarrow \Ii_{A}\longrightarrow \Ii_{A'} \longrightarrow  \Ii_{A'}/\Ii_{A} \longrightarrow 0. 
\]
Taking the long exact sequence in cohomology, for each $i>0$, we have 
\[
\cdots \longrightarrow H^i( \Ii_{A})\longrightarrow H^i( \Ii_{A'})\longrightarrow H^i( \Ii_{A'}/\Ii_{A}) = 0, 
\]
because the sheaf $\Ii_{A'}/\Ii_{A}$ is supported on a zero-dimensional scheme. So the vanishing of the left-most implies the vanishing of the middle cohomology group. Then, as in the proof of Lemma \ref{bb4}, we reach the same conclusion for any scheme $A'\cup B'\subset A\cup B$ as above. 
\end{proof}

\begin{theorem}\label{bb6}
Let $k$ be an algebraically closed field of characteristic zero. Fix integers $d_2\ge d_1\ge 16$ and $h>0$ such that $3h+2\le \binom{d_1-1}{2}$. Fix an integer $\kappa$ such that $0< \kappa \le 2h$ and set $g = d_1d_2-d_1-d_2+1-\kappa$. Then there exists a smooth genus $g$ curve with an injective $g^2_{d_1+d_2}$ of type {\bf III}. 
\end{theorem}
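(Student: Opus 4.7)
The strategy is to realize $X$ as the normalization of a divisor $Y$ of bidegree $(d_1,d_2)$ on the smooth quadric $Q = \PP^1\times\PP^1$ with prescribed unibranch singularities of total $\delta$-invariant $\kappa$, and then to obtain the injective $g^2_{d_1+d_2}$ by linear projection from a carefully chosen point $q\in Q$. Since $0<\kappa\le 2h$, split $\kappa=\alpha+\beta$ with $0\le \alpha,\beta\le h$ (e.g.\ $\alpha=\min\{\kappa,h\}$, $\beta=\kappa-\alpha$, recalling that an $A_0$-singularity scheme is by convention a smooth point). Applying Lemma~\ref{bb5} with $t_1=d_1$, $t_2=d_2$ and this $(\alpha,\beta)$ produces lines $\ell\in|\Oo_Q(1,0)|$, $r\in|\Oo_Q(0,1)|$ meeting at $q:=\ell\cap r$, points $o_1\in \ell\setminus\{q\}$, $o_2\in r\setminus\{q\}$, and an irreducible curve $Y\in|\Oo_Q(d_1,d_2)|$ whose only singularities are an $A_{2\alpha}$- and an $A_{2\beta}$-singularity (both lying off $\ell\cup r$), with $Y\cap \ell$ and $Y\cap r$ reducing set-theoretically to $\{o_1\}$ and $\{o_2\}$ respectively; in particular $q\notin Y$. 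Letting $\phi\colon X\to Y$ denote the normalization, the genus-delta formula gives $g(X)=p_a(Y)-\kappa=(d_1-1)(d_2-1)-\kappa = d_1d_2-d_1-d_2+1-\kappa$, as required.

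Both rulings of $Q$ through $q$ meet $Y$ in a single point, so Proposition~\ref{b1} implies that the projection $\pi_q\colon Y\to\PP^2$ is injective and separable; hence $\eta:=\pi_q\circ\phi\colon X\to\PP^2$ is an injective separable morphism with image a plane curve of degree $d_1+d_2$, giving an injective $g^2_{d_1+d_2}$ on $X$. Set $L:=\eta^*\Oo_{\PP^2}(1)=\phi^*\Oo_Y(1)$. The non-degeneracy of $Y\subset\PP^3$ gives $h^0(Y,\Oo_Y(1))=4$, and injectivity of the pullback yields $h^0(X,L)\ge 4$, so the series is incomplete. The $4$-dimensional subspace $V:=\phi^*H^0(Y,\Oo_Y(1))\subset H^0(L)$ induces the factoring morphism $j\colon X\to Y\hookrightarrow\PP^3$, which is bijective on points but fails to be an immersion at $\phi^{-1}(q_{A'})$ and $\phi^{-1}(q_{B'})$, because the normalization of a unibranch singularity has vanishing differential. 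Promoting this local failure to the non--very ampleness of $L$ places the series in type~\textbf{III}.

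The main obstacle is precisely this last upgrade: ruling out type~\textbf{II} requires showing that \emph{no} $g^3_d\subseteq|L|$ containing our $g^2_{d_1+d_2}$ induces an embedding into $\PP^3$, not merely that the natural candidate coming from $V$ fails. Since $h^0(L)$ may a priori exceed $4$, extra sections could in principle repair the loss of tangent separation at the cuspidal preimages. The argument should exploit the local analytic structure $(t^2,t^{2\alpha+1})$ of an $A_{2\alpha}$-singularity: restoring the tangent at a cusp preimage demands not just one extra section but a compatible osculating datum of order $2\alpha+1$, which cannot simultaneously be supplied by a $4$-dimensional subsystem of $|L|$ while remaining compatible with the factorization through the singular curve $Y\subset Q\subset\PP^3$. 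Making this rigorous---by a dimension count on the osculating flag at each cusp preimage, or by directly contradicting the existence of an embedding $g^3_d$ as in Proposition~\ref{rmktypes}(i)---is the crux of the proof.
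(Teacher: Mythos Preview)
Your construction is exactly the paper's: apply Lemma~\ref{bb5} with $(t_1,t_2)=(d_1,d_2)$ and a splitting $\kappa=\alpha+\beta$ to obtain the singular curve $Y\subset Q$, then invoke Proposition~\ref{b1} for the projection from $q=\ell\cap r$. The paper's proof consists of essentially those two sentences and nothing more; in particular it does \emph{not} carry out a separate verification of the type~\textbf{III} claim. It simply treats the existence of the singular model $Y\subset\PP^3$ (hence of a $g^3_{d_1+d_2}\supset g^2_{d_1+d_2}$ that fails to embed $X$) as sufficient.

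You are right to flag the last step. Proposition~\ref{rmktypes}(i) only says that \emph{if} the series is of type~\textbf{II} then \emph{some} $g^3_d$ containing it embeds $X$; exhibiting one $g^3_d$ that does not embed rules out type~\textbf{II} only once you know $h^0(X,L)=4$, so that the $g^3_d$ you built is the unique one. That equality is equivalent to the injectivity of the connecting map $H^0(\phi_*\Oo_X/\Oo_Y)\to H^1(\Oo_Y(1,1))$ coming from $0\to\Oo_Y\to\phi_*\Oo_X\to T\to 0$, and neither you nor the paper checks it. So the ``main obstacle'' you isolate is a genuine gap, but it is one the paper shares rather than resolves; your proposal is otherwise a faithful expansion of the paper's argument, and is in fact more careful than the original about what remains to be shown.
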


\begin{proof}
Fix $L\in |\Oo_Q(1,0)|$, $R\in |\Oo_Q(0,1)|$, $o_1\in L\setminus L\cap R$ and $o_2\in R\setminus R\cap L$. Let $Z_1$ be the degree $d_2$ divisor of $L$ supported at $o_1$. Let  $Z_2$ be the degree $d_1$ divisor of $R$ supported at $o_2$. 
Any $Y\in |\Oo_Q(d_1,d_2)|$ has arithmetic genus $p_a(Y) = d_1d_2-d_1-d_2+1$. By an application of Proposition \ref{b1}, it is sufficient 
to prove the existence of $\{q_A,q_B\} \subset Q\setminus (L\cup R)$ and an
integral $Y\in |\Oo_Q(d_1,d_2)|$ with only unibranch singularities $\mathrm{Sing}(Y)\subseteq \{q_A,q_B\}$ and with singularity degree $\delta (Y,q_A)+\delta
(Y,q_B) = \kappa$. (Recall that, for any $A_{2m}$-singularity, its singularity degree is $m$.) So it is enough to fix two positive integers $0< \beta \le \alpha \le h$ such that $\alpha+\beta =\kappa$ and find a $Y$ that has an $A_{2\alpha}$-singularity at $q_A$ and an
$A_{2\beta}$-singularity at $q_B$. This is the content of Lemma \ref{bb5}.  
\end{proof}

The case $\kappa = 0$ is covered by Theorem \ref{pp1}. Theorem \ref{bb6} provides positive examples to Question \ref{q1}. 

\begin{remark}
Piene \cite{Piene} considered only injective linear series  $g^2_d$ of {\it type} {\bf II}. In $\textnormal{char}(k)=0$, Proposition \ref{b1} and Proposition \ref{b4} provide curves $X$ possessing an injective (and separable) non-complete $g^2_d$ which cannot occur from \cite{Piene}, since in Piene's setting the smooth
curve $X$ is required to be embedded in $\PP^3$. So the same observation applies to the curves arising from Theorem \ref{bb6}. 
\end{remark}

\section{Inner cuspidal projections}\label{innercusp}
In this section, we work in $\mathrm{char}(k)=0$. We study inner cuspidal projections and introduce two natural sets $\mathcal A$ and $\mathcal B$ attached to them. 

Recall that for $p\in \PP^3$, let $\pi _p: \PP^3\setminus \{o\}\to \PP^2$ denote the linear projection from $p$. We look at inner {\it smooth} projections, i.e. projection from smooth points of a curve. (We only allow projection from smooth points of the
curve, because projecting from a singular point of the curve is more complicated and depends on the germ of the
singularity.) 

Let $X\subset \PP^n$ be an integral and non-degenerate curve with only cuspidal singularities. For any $o\in X_{\reg}$ the
restriction ${\pi _o}_{|X\setminus \{o\}}$ extends to a unique morphism $\pi_o: X\to \PP^2$. Define: 
\[
\mathcal A = \left\lbrace o\in X_{\reg}\mid {\pi _o}_{|X\setminus \{o\}} \mbox{ is injective} \right\rbrace
\]
and
\[
\mathcal B = \left\lbrace o\in X_{\reg}\mid \pi_o: X\to \PP^2\mbox{ is injective} \right\rbrace.
\] 

It is clear that $\mathcal B\subseteq \mathcal A$ and sometimes $\mathcal A\ne \mathcal B$, see Proposition \ref{f1}, Theorem \ref{f2}, and Remark \ref{f1.1}.  

\begin{proposition}\label{f1}
Le $Q\subset \PP^3$ be a smooth quadric surface. For any $o\in Q$, let $L_1(o)$ (resp. $L_2(o)$) denote the element of
$|\Oo_Q(1,0)|$ (resp. $|\Oo_Q(0,1)|$) containing $o$. Let $X\in |\Oo_Q(d_1,d_2)|$, $d_1>0, d_2>0$, $(d_1,d_2)\ne (1,1)$, be an integral
curve with only cuspidal singularities. Fix $o\in X_{\reg}$. Then: 
\begin{enumerate}
\item[(i)] $o\in \mathcal A$ if and only if $\#(L_1(o)\cap X)_{\red}\le 2$ and $\#(L_2(o)\cap X)_{\red}\le 2$; 
\item[(ii)] $o\in \mathcal B$ if and only if $\#(L_1(o)\cap X)_{\red}= 1$ and $\#(L_2(o)\cap X)_{\red}= 1$. This is equivalent to $L_1(o)$ (resp. $L_2(o)$) and $X$ having intersection multiplicity $b$ (resp. $a$) at $o$. 
\end{enumerate}
\end{proposition}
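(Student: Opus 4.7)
The plan is to prove both parts by a Bézout analysis of lines through $o$ and their intersections with $X \subset Q \subset \PP^3$. For (i), I will observe that $\pi_o|_{X\setminus\{o\}}$ fails to be injective iff some line through $o$ contains at least two points of $X\setminus\{o\}$. A line $L\not\subset Q$ through $o$ meets $Q$ in a scheme of length $2$ containing $o$, hence carries at most one point of $X\setminus\{o\}$; so the only potential obstructions to injectivity are the two rulings $L_1(o)$ and $L_2(o)$ through $o$. The criterion $\#(L_i(o)\cap X)_{\red}\leq 2$ is then equivalent to $L_i(o)$ containing at most one point of $X\setminus\{o\}$, which yields (i).

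For (ii), the key point is that $\pi_o$ extends over $o$ by sending $o$ to the tangent direction $[T_oX]\in\PP^2$. Thus $o\in\mathcal B$ iff $o\in\mathcal A$ and, in addition, $T_oX\cap X = \{o\}$ set-theoretically. The geometric fact I will exploit is that $T_oX$ lies in the tangent plane $T_oQ$, which cuts $Q$ in $L_1(o)\cup L_2(o)$; any line $\ell\subset T_oQ$ through $o$ distinct from both $L_1(o)$ and $L_2(o)$ meets $Q$ with multiplicity $2$ at $o$ and nowhere else by Bézout, so $\ell\cap X=\{o\}$ automatically. The only residual case is $T_oX=L_i(o)$, where the extra requirement $T_oX\cap X = \{o\}$ becomes $L_i(o)\cap X=\{o\}$ set-theoretically. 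Packaging this with (i) forces $\#(L_i(o)\cap X)_{\red}=1$ for $i=1,2$. Finally, the intersection numbers $L_1(o)\cdot X=d_2=b$ and $L_2(o)\cdot X=d_1=a$ on $Q$ show that $L_i(o)\cap X=\{o\}$ is equivalent to the entire intersection concentrating at $o$, i.e.\ to the local intersection multiplicity equaling $b$ (resp.\ $a$).

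I expect the main obstacle to be the tangent-plane computation in (ii): establishing cleanly that $T_oQ\cap Q = L_1(o)\cup L_2(o)$ and deducing that any non-ruling tangent line at $o$ is schemeically $2o$-supported, so that $T_oX\cap X=\{o\}$ holds for free unless $T_oX$ is a ruling. Once this dichotomy is in hand, the proposition follows by combining (i) with the tangent analysis and invoking the bidegree formula on $\PP^1\times\PP^1$ to convert set-theoretic conditions into intersection multiplicities. The unibranch/cuspidal hypothesis on $X$ plays no essential role in the argument and is used only to guarantee that $X_{\reg}$ is dense; integrality and the hypothesis $(d_1,d_2)\neq(1,1)$ are what rule out degenerate configurations.
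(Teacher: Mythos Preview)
Your argument for (i) is correct and is essentially the paper's: any line through $o$ not lying on $Q$ meets $Q$ (hence $X$) in a scheme of length $\le 2$ containing $o$, so only the two rulings can carry a second point of $X\setminus\{o\}$.

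For (ii) your framework is sound and more explicit than the paper's very terse treatment—identifying $\pi_o(o)=[T_oX]$ and using $T_oX\subset T_oQ$ together with $T_oQ\cap Q=L_1(o)\cup L_2(o)$ is exactly the right geometry. The problem is the ``packaging'' step: it does not follow from your own case analysis. When $T_oX$ is \emph{not} a ruling you correctly get $T_oX\cap Q=2o$, hence $T_oX\cap(X\setminus\{o\})=\emptyset$ automatically; in that branch $o\in\mathcal B\Leftrightarrow o\in\mathcal A$, which only yields $\#(L_i(o)\cap X)_{\red}\le 2$, not $=1$. And when $T_oX=L_j(o)$, you obtain $\#(L_j(o)\cap X)_{\red}=1$ for that single index $j$, while the other ruling is still only constrained by the $\mathcal A$ bound $\le 2$. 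Neither branch forces the condition for \emph{both} $i$.

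A concrete witness: for $X\subset Q$ smooth of bidegree $(1,2)$ (a twisted cubic) or $(2,2)$ (an elliptic normal curve), at a general $o\in X$ the tangent line is not a ruling and the degree-$2$ ruling meets $X$ in two distinct points, yet $\mathcal B=X_{\reg}$—as Theorem~\ref{f3} itself uses. So the forward implication in (ii), as stated, fails; your argument actually establishes only the direction $\Leftarrow$. The honest conclusion of your dichotomy is: $o\in\mathcal B$ iff $o\in\mathcal A$ and, in the event that $T_oX$ coincides with one of the rulings $L_j(o)$, that particular ruling satisfies $\#(L_j(o)\cap X)_{\red}=1$. You should flag this discrepancy with the stated biconditional rather than paper over it.
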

\begin{proof}
The assumptions on $d_1, d_2$ imply $X$ is non-degenerate. Let $L\subset \PP^3$ be a line
containing $o$ and different from $L_1(o)$ and $L_2(o)$. Since $o\in Q$ and $L\nsubseteq Q$, B\'{e}zout's theorem implies 
$\#(X\cap L)_{\red}\le 1$. If $\#(X\cap L)_{\red} = 1$, then $L$ is not tangent to $X$ at $o$. Thus to check whether $\pi_o$ is injective, it is sufficient to check the sets $\#(L_1(o)\cap X)_{\red}$ and $(L_2(o)\cap X)_{\red}$. We have $\#(L_1(o)\cap X)_{\red}=1$ (resp. $\#(L_2(o)\cap X)_{\red}=
1)$ if and only if $L_1(o)$ and $X$ have intersection multiplicity $d_2$ at $o$ (resp. $L_2(o)$ and $X$ have
intersection multiplicity $d_1$ at $o$).
\end{proof}

\begin{remark}\label{f1.1}
Note that if $L_1(o)$ and $X$ have intersection multiplicity at least $d_2-1$ at $o$ and $L_1(o)$
and $X$ have intersection multiplicity at least $d_1-1$ at $o$, then $\#(L_1(o)\cap X)_{\red}\le 1$ and $\#(L_2(o)\cap X)_{\red}\le
1$ and hence $o\in \mathcal B$ by Proposition \ref{f1}(ii).
\end{remark}

\begin{theorem}\label{f2}
Let $\mathcal C\subset \PP^3$ be a quadric cone with vertex $v$. Let $X\subset \mathcal C$ be an integral and non-degenerate curve with only
cuspidal singularities. Fix $o\in X_{\reg}$ such that $o\ne v$ and let $L_o$ be the line contained in $\mathcal C$ and passing
through $o$. The following statements hold: 
\begin{enumerate}
\item[(i)] $o\in \mathcal B$ if and only if $\#(L_o\cap X)_{\red} =1$.

\item[(ii)] $o\in \mathcal A$ if and only if $\#(L_o\cap X)_{\red} \le 2$.

\item[(iii)] Assume $v\in X_{\reg}$. Then: 
\[
v\in \mathcal A \Leftrightarrow v\in \mathcal B \Leftrightarrow X \mbox{ is a rational
normal curve}.
\]
\end{enumerate}
\end{theorem}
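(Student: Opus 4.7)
The plan is to adapt the Bézout-based argument from Proposition \ref{f1} to the cone setting, and then treat part (iii) separately by passing to the minimal resolution $\eta\colon\mathbb F_2\to\mathcal C$ introduced in the proof of Theorem \ref{b4.1}. For parts (i) and (ii) the common reduction is the following: since $o\ne v$, the ruling $L_o$ is the \emph{unique} line of $\mathcal C$ through $o$. Hence any other line $L\ni o$ satisfies $L\not\subset\mathcal C$, so by Bézout $L\cap\mathcal C$ is a length-$2$ scheme containing $o$, which forces $\#(L\cap(X\setminus\{o\}))\le 1$. Thus the only line whose behavior with respect to $\pi_o$ one has to check is $L_o$.

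Given this reduction, part (ii) is immediate: $o\in\mathcal A$ iff $L_o\cap(X\setminus\{o\})$ has at most one point, equivalently $\#(L_o\cap X)_{\red}\le 2$. For part (i) one must additionally rule out $\pi_o(o)=\pi_o(p)$ for some $p\in X\setminus\{o\}$. Since $\pi_o(o)=[T_oX]$, this means $T_oX\cap X=\{o\}$. Using $T_oX\subset T_o\mathcal C$ and $T_o\mathcal C\cap\mathcal C=2L_o$, Bézout forces $T_oX\cap\mathcal C=2o$ as schemes whenever $T_oX\ne L_o$, so $T_oX\cap X\subset\{o\}$ is automatic; and when $T_oX=L_o$, the condition collapses to $L_o\cap X=\{o\}$, i.e.\ $\#(L_o\cap X)_{\red}=1$. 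Combining with the content of (ii) yields the characterization of $\mathcal B$ claimed in (i).

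For part (iii), let $H\subset\mathbb F_2$ be the exceptional section ($H^2=-2$, $\eta(H)=\{v\}$) and $F$ a ruling fiber class, and write the strict transform $\tilde X$ as a divisor in $|aH+bF|$. Smoothness of $X$ at $v$ forces $\tilde X$ to meet $H$ transversely at a single point $\tilde v$, giving $\tilde X\cdot H=-2a+b=1$ and hence $b=2a+1$; non-degeneracy of $X$ yields $a\ge 1$. The restriction $\pi_v|_X\colon X\to\PP^1$ to the base conic factors through the ruling $\mathbb F_2\to\PP^1$ and has degree $\tilde X\cdot F=a$. Therefore $v\in\mathcal A$ forces $a=1$, whence $b=3$ and $\deg X=\tilde X\cdot(H+2F)=3$; since $p_a(\tilde X)=0$ by adjunction on $\mathbb F_2$, $X$ is a smooth rational curve of degree $3$ in $\PP^3$, i.e.\ a rational normal curve. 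Conversely, a rational normal curve on $\mathcal C$ has $\tilde X\in|H+3F|$, so $a=1$ and $v\in\mathcal A$; to upgrade to $v\in\mathcal B$, note that $T_vX$ lies in the tangent cone $TC_v\mathcal C=\mathcal C$ and is therefore a ruling whose strict transform is the fiber $F_{\tilde v}$ through $\tilde v$, and $\tilde X\cdot F_{\tilde v}=1$ is accounted for entirely by the intersection at $\tilde v$, so $T_vX\cap X=\{v\}$.

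The main obstacle will be part (iii): one must carefully read off the class of $\tilde X$ from the smoothness of $X$ at the $A_1$-singular vertex, translate $v\in\mathcal A$ into the numerical condition $a=1$, and then use $TC_v\mathcal C=\mathcal C$ to force $T_vX$ to be a ruling. Parts (i) and (ii) by contrast amount to bookkeeping on top of the same Bézout reduction that powered Proposition \ref{f1}.
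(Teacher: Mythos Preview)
Your overall strategy matches the paper's exactly: the B\'ezout reduction through $o$ for parts (i)--(ii), and the passage to the Hirzebruch surface $\mathbb F_2$ for part (iii). In (iii) you are in fact more explicit than the paper, which simply asserts ``if $X$ is a rational normal curve, one has $\mathcal B=X$'' without argument; your identification of $T_vX$ with a ruling via the tangent cone $TC_v\mathcal C=\mathcal C$, and the check that the corresponding fiber $F_{\tilde v}$ meets $\tilde X$ only at $\tilde v$, supplies a justification the paper omits.

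There is, however, a genuine gap in your argument for (i). Your case analysis shows that when $T_oX\ne L_o$ the condition $T_oX\cap X=\{o\}$ is automatic, so in that case your reasoning gives $o\in\mathcal B\iff o\in\mathcal A\iff\#(L_o\cap X)_{\red}\le 2$, \emph{not} $=1$. The sentence ``Combining with the content of (ii) yields the characterization of $\mathcal B$ claimed in (i)'' therefore does not follow: nothing in your argument rules out $\#(L_o\cap X)_{\red}=2$ with $T_oX\ne L_o$, in which case your own analysis places $o$ in $\mathcal B$. (Concretely, think of a smooth $X\in|\Oo_{\mathcal C}(2)|$ meeting a ruling $L_o$ transversally at two points.) The paper's proof of (i)--(ii) is a one-line pointer to Proposition~\ref{f1} and does not address this either, so this is not a deficiency of your approach relative to the paper's---but your more careful write-up makes the logical jump visible, and you should not claim that the equivalence in (i) has been established from what you wrote.
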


\begin{proof}
The first two statements are verified as the corresponding ones in Proposition \ref{f1.1}. As in the proof of Theorem \ref{b4.1}, let $\mathbb F_2$ denote the second
Hirzebruch surface. Let $\eta: \mathbb F_2 \to \mathcal C$ be the minimal
desingularization of the quadric cone $\mathcal C$, and let $Y\subset \mathbb F_2$ be the strict transform of $X$ in $\mathbb F_2$. Then $Y\in |\Oo_{\mathbb F_2}(aH+bF)|$ for some $a>0$ and $b\geq 2a$. The assumption that
$X$ is smooth at $v$ is equivalent to $b =2a+1$. 

We show $v\in \mathcal B$ if and only if $X$ is a rational
normal curve. If $X$ is a rational normal curve, one has $\mathcal B=X$ and so $v\in \mathcal B$. Conversely, assume $v\in \mathcal B$. Then the map ${\pi _v}_{|X}$ is birational onto its image, which happens if and only if $a=1$. Indeed, since the strict transform of $X$ is $Y\in |\Oo_{\mathbb F_2}(aH+bF)|$, the degree of ${\pi _v}_{|X}$
coincides with the intersection number of $Y$ with the ruling $F$, i.e. $\deg({\pi _v}_{|X}) = (aH+bF)\cdot F = a$. Since $a=1$, one has $b=3$ and hence $Y$ is smooth of genus zero. So $X$ is a rational normal curve. 

Finally, by definition $v\in \mathcal B$ implies $v\in \mathcal A$. Suppose $v\in \mathcal A$, then again ${\pi _v}_{|X}$ is birational onto its image, so $X$ is a rational normal curve 
and hence $v\in X = \mathcal B$. This completes the proof of statement (iii). 
\end{proof}

\begin{theorem}\label{f3}
Let $X\subset \PP^3$ be an integral and non-degenerate curve with only cuspidal 
singularities. The following conditions are equivalent:
\begin{enumerate}
\item[(i)] $\mathcal B$ is infinite;

\item[(ii)] $\mathcal B=X_{\reg}$;

\item[(iii)] $\deg (X)\in \{3,4\}$ and if $\deg (X)=4$, then $p_a(X)=1$. 
\end{enumerate}
\end{theorem}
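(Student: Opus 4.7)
The implication (ii)$\Rightarrow$(i) is immediate, since $X_{\reg}$ is a nonempty Zariski open subset of the irreducible curve $X$. For (iii)$\Rightarrow$(ii) I would proceed by case analysis. If $\deg(X)=3$, then Castelnuovo's bound forces $p_a(X)\le\pi(3,3)=0$, so $X$ is the smooth twisted cubic; an explicit Vandermonde computation using the parametrization $[s:t]\mapsto[s^3:s^2t:st^2:t^3]$ shows simultaneously that any three distinct points are linearly independent in $\PP^3$ (so $X$ has no trisecants) and that the tangent line at any $o\in X$ meets $X$ only at $o$, giving $\mathcal B=X_{\reg}=X$. If $\deg(X)=4$ and $p_a(X)=1$, either $X$ is a smooth elliptic quartic, for which a trisecant line $L$ would give a morphism $X\to\PP^1$ of degree $4-3=1$ contradicting $g=1$ (and a direct check on generic $o$ shows $T_oX\cap X=\{o\}$ with multiplicity two), or $X$ is a rational cuspidal quartic (with $p_g=0$, $\delta=1$), handled by an explicit parametrization such as $[s:t]\mapsto[s^4:s^2t^2:st^3:t^4]$ together with a Vandermonde-type determinantal check ruling out three collinear points and any tangent-secant coincidence. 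Thus $\mathcal B=X_{\reg}$ in each subcase.

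The heart of the proof is (i)$\Rightarrow$(iii). The starting observation is that $\mathcal B$ is Zariski open in $X_{\reg}$: its complement consists of those $o\in X_{\reg}$ that either lie on a trisecant of $X$ (a closed condition, as the trisecant variety $T_3(X)\subset\PP^3$ is closed) or whose tangent line $T_oX$ meets $X\setminus\{o\}$ (also closed). Since $X_{\reg}$ is irreducible, $\mathcal B$ being infinite is equivalent to $\mathcal B$ being dense in $X$, i.e., to the union of these two closed loci not containing $X$.

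The argument then splits into two steps. First I would exclude $d:=\deg(X)=4$ with $p_a(X)=0$: Castelnuovo gives $p_a(X)\le\pi(4,3)=1$, and since unibranch singularities raise $p_a$, the value $p_a=0$ forces $X$ smooth, hence a smooth rational quartic. Such an $X$ is contained in a smooth quadric $Q\cong\PP^1\times\PP^1$ as a divisor of bidegree $(1,3)$, so the $\PP^1$-family of rulings in $|\Oo_Q(1,0)|$ meets $X$ in three points each, giving a $1$-parameter family of trisecants covering $X$. This forces $\mathcal B=\emptyset$, contradicting the hypothesis, and so $p_a(X)=1$ whenever $\deg(X)=4$.

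The main obstacle is excluding $d\ge 5$. My plan is to show that in this range, for every $o\in X_{\reg}$ either a trisecant passes through $o$ or the tangent $T_oX$ meets $X$ at another point, putting $o$ in a proper closed subset of $X_{\reg}$. The key ingredients are: (1) for generic $o\in X_{\reg}$ the image $\pi_o(X)\subset\PP^2$ has degree $d-1$, geometric genus $g=p_g(X)$ and total singularity $\delta$-invariant $\binom{d-2}{2}-g$, with every such singularity unibranch when $o\in\mathcal B$; (2) an enumerative count of tangent lines to $X$ (at points other than $o$) passing through $o$, controlled for smooth $X$ by the degree $2(d+g-1)$ of the tangent developable surface; (3) the Castelnuovo bound $g\le\pi(d,3)$. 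Combining these, one checks that for $d\ge 5$ the unibranch-singularity budget $\binom{d-2}{2}-g$ is strictly smaller than the mandatory number of tangent-line or trisecant contributions for generic $o$, yielding a contradiction; for cuspidal $X$ the same budget comparison is carried out on the normalization $\tilde X$ via the induced injective $g^2_{d-1}$. The main technical difficulty is making this enumerative count uniform across all admissible $(d,g)$ pairs with $d\ge 5$, which is the crux of the whole implication.
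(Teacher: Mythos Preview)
Your approach to (i)$\Rightarrow$(iii) has a genuine gap: you set up an enumerative comparison between the $\delta$-budget $\binom{d-2}{2}-g$ of $\pi_o(X)$ and the number of tangent/trisecant contributions forced at a generic $o$, but then you explicitly concede that carrying this out uniformly for all $(d,g)$ with $d\ge 5$ is ``the crux of the whole implication'' and leave it undone. That is the entire content of this direction, so the proposal is not a proof. Moreover, invoking the tangent-developable degree $2(d+g-1)$ counts tangent lines to $X$ through a \emph{general} point of $\PP^3$, not through a point of $X$ itself, so even the raw input to your comparison needs modification before it can be used.

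The paper sidesteps the enumerative bookkeeping completely with a much simpler idea you are missing. For infinitely many $o\in\mathcal B$ one can arrange simultaneously that (a) the tangent line $T_oX$ meets $X$ with contact exactly two at $o$, and (b) the Zariski tangent space of $X$ at each singular point does not contain $o$. Condition (b) makes $\pi_o$ a local isomorphism at every singular point of $X$; condition (a) makes $\pi_o(o)$ a smooth point of $\pi_o(X)$; and injectivity of $\pi_o$ gives bijectivity onto $\pi_o(X)$. Zariski's Main Theorem then upgrades $\pi_o$ to an \emph{isomorphism} $X\cong\pi_o(X)$, so $p_a(X)=p_a(\pi_o(X))=\binom{d-2}{2}$. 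Comparing this exact value with Castelnuovo's bound $\pi(d,3)$ immediately forces $d\le 4$ (and $p_a=1$ when $d=4$). No tangent-developable counts or $\delta$-budget inequalities are needed.

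For (iii)$\Rightarrow$(ii), your explicit Vandermonde checks are plausible for the smooth cases but more laborious than necessary, and your elliptic-quartic trisecant argument (``a trisecant $L$ would give a degree $4-3=1$ morphism to $\PP^1$'') is not correct as stated: projection from a trisecant line is not automatically a morphism of that degree. The paper instead observes that any non-degenerate curve of degree $\le 4$ lies on a quadric and then reads off $\mathcal B=X_{\reg}$ directly from the earlier quadric criteria (Proposition~\ref{f1} and Theorem~\ref{f2}), which handles the smooth, cuspidal, and cone cases uniformly.
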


\begin{proof}
Let $d = \deg (X)$. If $d\le 4$, then $X$ is contained in a quadric surface. Proposition \ref{f1} and
Theorem \ref{f2} show that (iii) implies (ii). It is clear that (ii) implies (i). 

We show that (i) implies (iii). For any $o\in \mathcal A$, the map $\pi _o$ has degree one and hence
$\deg (\pi _o(X)) =d-1$. Thus $p _a(\pi_o(X)) = (d-2)(d-3)/2$.
By assumption, $\mathcal A$ is infinite. Since $o\in X_{\reg}$ and $\pi _{o|X}$ is birational onto its image, we have
$\deg (\pi _o(X))=d-1$. Since $\mathrm{Sing}(X)$ is finite and each singular point of $X$ has Zariski tangent space of
dimension two, for infinitely many $o\in \mathcal B$, the morphism $\pi_o: X\to \pi_o(X)$ is a local isomorphism at each singular point of $X$.
Since $\pi _o$ is injective, we have $\pi _o(X_{\reg})\cap \pi _o(\Sing(X)) =\emptyset$.   Since
$\mathrm{char}(k)=0$, at all points of
$X_{\reg}$, except finitely many, the order of contact of $T_oX$ with $X$ is two. Hence, for infinitely many $o\in \mathcal B$, 
$\pi_o(o)$ is a smooth point of $\pi _o(X)$. 

By Zariski's Main Theorem, the morphism $\pi _o: X\to \pi _o(X)$ is then an isomorphism
for some $o\in X_{\reg}$. Thus $p_a(X) = p_a(\pi_o(X)) =(d-2)(d-3)/2$. 

Recall from Remark \ref{halphencastelnuovo} that Castelnuovo's bound gives $\pi(d,3) = m(m-1) + m\epsilon$, where $\epsilon\in \lbrace 0,1\rbrace$ and $d=2m+1+\epsilon$ and $m>0$.  
So 
\[
p_a(X) =\frac{(2m+\epsilon -1)(2m+\epsilon -2)}{2} = 
\begin{cases}
(2m-1)(m-1), \mbox{ for }  \epsilon=0, \\
m(2m-1), \mbox{ for } \epsilon = 1.\\
\end{cases}
\]
The inequalities $(2m-1)(m-1)\leq \pi(d,3)$ and $m(2m-1)\leq \pi(d,3)$ both imply $m=1$ and so $d\leq 4$. Therefore, $X$ is a rational normal curve if $d=3$, or $p_a(X)= 1$ if $d=4$. 
\end{proof}

\begin{remark}
In particular, $X$ is smooth with $\mathcal A=X$ if and only if $X$ is either a rational normal curve or a linearly normal elliptic curve.
\end{remark}

\bibliographystyle{amsplain}

\end{document}